\documentclass[letterpaper,11pt]{amsart}

\usepackage{epsfig}
\usepackage{amssymb}
\usepackage{amsfonts}
\usepackage{amsmath}
\usepackage{graphicx}
\usepackage{mathrsfs}
\usepackage{stmaryrd}
\usepackage{amsthm}
\usepackage{mathtools}
\usepackage[all]{xy}
\usepackage[top=1in, bottom=1in, left=1in, right=1in]{geometry}
\usepackage{color}

\newtheorem{theorem}{Theorem}
\newtheorem{lemma}{Lemma}

\newtheorem{corollary}{Corollary}

\newtheorem{definition}{Definition}
\newtheorem{remark}{Remark}

\numberwithin{equation}{section}
\numberwithin{lemma}{section}
\numberwithin{proposition}{section}
\numberwithin{corollary}{section}
\numberwithin{remark}{section}
\numberwithin{theorem}{section}
\numberwithin{definition}{section}

\UseRawInputEncoding

\title{A note on the Compactness of Poincar\'{e}-Einstein manifolds}

\author{Fang Wang}
\address{School of Mathematical Sciences, Shanghai Jiao Tong University, 800 Dongchuan Rd, Shanghai, 200240}
\email{fangwang1984@sjtu.edu.cn}
\author{Huihuang Zhou}
\address{School of Mathematical Sciences, Shanghai Jiao Tong University, 800 Dongchuan Rd, Shanghai, 200240}
\email{zhouhuihuang@sjtu.edu.cn}
\date{}
\thanks{Wang and Zhou's research are supported in part by National Natural Science Foundation of China No. 11871331 
and Shanghai Science and Technology Innovation Action Plan No. 20JC1413100.}

\begin{document}

\begin{abstract}
For a conformally compact  Poincar\'{e}-Einstein manifold $(X,g_+)$, we consider two types of compactifications for it. One is $\bar{g}=\rho^2g_+$, where $\rho$ is a fixed smooth defining function; the other is  the adapted (including Fefferman-Graham) compactification $\bar{g}_s=\rho^2_sg_+$ with a continuous parameter $s>\frac{n}{2}$. 
In this paper, we mainly prove that  for a set of  conformally compact Poincar\'{e}-Einstein manifolds $\{(X, g_{+}^{(i)})\}$ with conformal infinity of positive Yamabe type,  $\{\bar{g}^{(i)}\}$ is compact in $C^{k,\alpha}(\overline{X})$ topology if and only if $\{\bar{g}_s^{(i)}\}$ is compact in some $C^{l,\beta}(\overline{X})$ topology, provided that $\bar{g}^{(i)}|_{TM}=\bar{g}_s^{(i)}|_{TM}=\hat{g}^{(i)}$  and $\hat{g}^{(i)}$ has positive scalar curvature for each $i$. See Theorem \ref{thm.main} and Corollary \ref{cor.main} for the exact relation of $(k,\alpha)$ and $(l,\beta)$.


\end{abstract}

\maketitle

\section{Introduction}


Suppose $\overline{X}^{n+1}$ is a $(n+1)$-dimensional smooth compact manifold with boundary. Denote $X$ the interior of $\overline{X}$ and $M=\partial X$ the boundary. 
Let $\rho$ be a smooth boundary defining function, i.e., 
$$
0\leq \rho\in C^{\infty}(\overline{X}), \quad \rho>0\ \textrm{in $X$},\quad \rho=0\ \textrm{on $M$},  \quad d\rho|_{M}\neq 0. 
$$
We call  $(X,g_+)$ a $C^{k,\alpha}$ \textit{conformally compact Poincar\'e-Einstein manifold} with \textit{conformal infinity} $(M,[\hat{g}])$, if $g_+$ is a  complete metric in $X$ such that $\bar{g}=\rho^2g_+$ can be $C^{k,\alpha}$ extended to $\overline{X}$ and satisfies
$$
\begin{cases}
\mathrm{Ric}_{g_+}=-ng_+ &\ \mathrm{in}\ X,
\\
\bar{g}|_{TM}\in [\hat{g}] &\ \mathrm{on}\ M.
\end{cases}
$$
In this paper, we always require that $n\geq 3$, $k\geq 3, 0<\alpha<1$ and $\rho$ is fixed.

Given $(M, [\hat{g} ])$, the existence and uniqueness of $(X, g_{+})$, such that $(X, g_{+})$ is Poincar\'{e}-Einstein with conformal infinity $(M, [\hat{g}])$, are difficult problems in general. See \cite{An1, An2,An3, GL1, ST1, LSQ1} and many other works for related results.  
The compactness problem was developed recently in a series of papers by Chang-Ge-Qing. See \cite{CGQ1, CGQ2}. 

The compactness problem is formulated as follows: Given a set of $C^{k,\alpha}$ conformally compact Poincar\'{e}-Einstein metrics $\{(X, g_{+}^{(i)})\}$ with conformal infinity $\{(M, [\hat{g}^{(i)} ])\}$, if the boundary representatives  $\{\hat{g}^{(i)}\}$ is compact in $C^{k,\alpha}(M)$   topology, whether $\displaystyle \{\bar{g}^{(i)}=(\rho^{(i)})^2 g_+^{(i)}\}$  is also compact in $C^{k,\alpha}(\overline{X})$   topology  by a suitable choice of $\{\rho^{(i)}\}$ such that $\bar{g}^{(i)}|_{TM}=\hat{g}^{(i)}$. 
In \cite{CGQ1, CGQ2}, the authors proved that  it is true under certain geometric assumptions for the case $M=\mathbb{S}^3$. In particular, they chose $\{\bar{g}^{(i)}\}$ to be  the Fefferman-Graham compactification of $\{g_+^{(i)}\}$ when deriving the compactness, because for $n=3$, this imposes an extra curvature vanishing condition for $\{\bar{g}^{(i)}\}$.

In this paper, we mainly want to show that once we get the compactness for one type of compactification, then it may also hold for some other type. 
We consider a family of compactifications defined by a real parameter $s$ 
for a given $C^{k,\alpha}$ conformally compact Poincar\'{e}-Einstein manifold $(X,g_+)$ with conformal infinity $(M, [\hat{g}])$: 
\begin{itemize}
\item[(i)] 	For $s>\frac{n}{2}, s\neq n$, if $\mathrm{Spec}(-\Delta_+)>s(n-s)$, let $v_s$ satisfy the following equation:
\begin{equation*}
-\Delta_+ v_s- s(n-s)v_s=0, \quad 
v_s=\rho^{n-s}\left(1+O(\rho^{\epsilon})\right)\ \textrm{for some $\epsilon>0$}.
\end{equation*}
Then the \textit{adapted compactification} of $g_+$ is defined by
$$
\bar{g}_s=\rho_s^2 g_+, 
\quad\mathrm{where}\quad 
\rho_s=v_s^{\frac{1}{n-s}}.
$$

\item[(ii)]  For $s=n$, let $w$ satisfy the following equation
\begin{equation*}
 	-\Delta_+ w=n, \quad w=\log\rho+O(\rho^{\epsilon})\ \textrm{for some $\epsilon>0$}.
 \end{equation*}
 Then the \textit{Fefferman-Graham compactification} of $g_+$ is defined by
 $$
 \bar{g}_F=\rho_F^2g_+, 
 \quad\mathrm{where}\quad 
 \rho_F=e^{w}.
 $$
\end{itemize}
The spectrum condition required in (i) can be satisfied by a nonnegativity assumption on the Yamabe constant of the conformal infinity. 
In \cite{Le1}, Lee  showed that if the boundary Yamabe constant $\mathcal{Y}(M, [\hat{g}]) \geq 0$, then $\mathrm{Spec}(-\Delta_+)\geq \frac{n^2}{4}$. In this case, we can view the Fefferman-Graham compactification as  a limit of the adapted compactificaions: 
$$
\lim_{s\rightarrow n}\bar{g}_s=\bar{g}_F.
$$
Hence by taking $g_n=g_F$,  $\{\bar{g}_s:s>\frac{n}{2}\}$ forms a continuous family of compactifications.  See Section \ref{sec.4} for the details. 

The adapted  and Fefferman-Graham compactifications were first introduced in \cite{CC1} and \cite{FG2} respectively, which satisfy certain geometric curvature conditions and have certain advantage to study the fractional GJMS operators on the boundary, as well as the correspondence of interior Einstein metric with boundary conformal geometry. 
For example,  while $s=\frac{n}{2}+N-\frac{1}{2}$ for some positive integer $N$, then the  $2N$-th order Q-curvature of $\bar{g}_s$ vanishes, i.e. $Q_{2N}^{\bar{g}_s}=0$.

\vspace{0.1in}
The main theorem of this paper is the following.

\begin{theorem}\label{thm.main}
Suppose $\{(X,g^{(i)}_+)\}$ is a set of conformally compact Poincar\'{e}-Einstein manifolds with conformal infinity $\{(M,[\hat{g}^{(i)}])\}$ satisfying $\mathcal{Y}(M,[\hat{g}^{(i)}])\geq 0$. 
Let $\rho$ be a fixed smooth defining function and 
$$
\bar{g}^{(i)}=\rho^2g^{(i)}_+, \quad \bar{g}^{(i)}|_{TM}=\hat{g}^{(i)}. 
$$
For $s>\frac{n}{2}$, let $\{\bar{g}^{(i)}_s\}$ be the adapted (including Fefferman-Graham) compactification of $g^{(i)}_+$ with the fixed boundary representatives
$$
\bar{g}_s^{(i)}|_{TM}=\bar{g}^{(i)}|_{TM}=\hat{g}^{(i)}.
$$
\begin{itemize}
\item[(a)] 
If $\{\bar{g}^{(i)}\}$ is compact in $C^{k,\alpha}(\overline{X})$ $(k\geq 3, 0<\alpha<1)$   topology, 
then $\{\bar{g}_s^{(i)}\}$ is compact in $C^{l,\beta}(\overline{X})$ $(l\geq 0,0<\beta<1)$   topology, 
where
\begin{itemize}
\item[(a.1)] $l=k,0<\beta<\alpha$ if $k+\alpha<2s-n$;
\item[(a.2)] $l=k,0<\beta<\alpha$ if $k+\alpha\geq 2s-n$ and $2s-n=2N-1$ for some positive integer $N$;
\item[(a.3)] $l+\beta<2s-n$ if $k+\alpha\geq 2s-n$ and $2s-n$ is not a positive odd integer. 
\end{itemize}

\item[(b)] 
If $\{\bar{g}_s^{(i)}\}$ is compact in $C^{k,\alpha}(\overline{X})$ $(k\geq 3, 0<\alpha<1)$    topology for some  $s>\frac{n}{2}+1$ or $s=\frac{n+1}{2}$,  and  the boundary scalar curvature $\hat{R}^{(i)}>0$ for all $i$, 
then $\{\bar{g}^{(i)}\}$ is compact in $C^{k,\beta}(\overline{X})$ $(0<\beta<\alpha)$   topology.
\end{itemize}
\end{theorem}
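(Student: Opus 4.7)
My plan is to analyze both directions simultaneously through the single conformal factor $h:=\rho_s/\rho$, since $\bar g_s = h^2\bar g$ and $\bar g = h^{-2}\bar g_s$. Writing $v_s = \rho^{n-s} u_s$, the eigenfunction equation $-\Delta_+ v_s - s(n-s) v_s = 0$ with normalization $v_s = \rho^{n-s}(1+o(1))$ becomes a uniformly degenerate elliptic boundary value problem for $u_s$ on $\overline{X}$, with $C^{k-1,\alpha}$ coefficients built from $\bar g$ and Dirichlet data $u_s|_M = 1$. Since $h = u_s^{1/(n-s)}$, the theorem reduces to controlling the boundary regularity of $u_s$.

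For part (a), uniform $C^{k,\alpha}$ control of $\bar g^{(i)}$ feeds Schauder-type estimates in the $0$-calculus to give uniform interior and boundary estimates for $u_s^{(i)}$. The polyhomogeneous expansion corresponding to the two indicial roots $0$ and $2s-n$ at $M$ reads
\begin{equation*}
u_s \sim F_s + \rho^{2s-n} G_s \qquad (\text{with an additional } \log\rho \text{ when } 2s-n\in 2\mathbb{N}),
\end{equation*}
where $F_s$ is determined locally in its Taylor series at $M$ by $\bar g$ (hence uniformly $C^{k,\alpha}$-bounded) while $G_s$ is the global scattering datum, controlled uniformly by the spectral gap $\mathrm{Spec}(-\Delta_+^{(i)}) \geq n^2/4$ that Lee's theorem supplies from $\mathcal{Y}(M,[\hat g^{(i)}])\geq 0$. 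The three subcases are then a direct scale comparison: (a.1) if $k+\alpha<2s-n$ the tail $\rho^{2s-n} G_s$ already lies in $C^{k,\alpha}$, so $u_s^{(i)}$ inherits the background regularity; (a.2) if $2s-n=2N-1$ the factor $\rho^{2s-n}$ is a smooth monomial and again no regularity is lost; (a.3) otherwise $\rho^{2s-n}$ (or $\rho^{2s-n}\log\rho$) is genuinely non-smooth of order $2s-n$, capping $u_s^{(i)}$ at $C^{l,\beta}$ for every $l+\beta<2s-n$. Arzel\`a--Ascoli then upgrades the uniform H\"older bounds to $C^{l,\beta}$-precompactness of $\{\bar g_s^{(i)}\}$.

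For part (b), I would reverse the argument. The Einstein condition $\mathrm{Ric}(g_+^{(i)}) = -n g_+^{(i)}$, rewritten in terms of $\bar g_s^{(i)}$ and $\rho_s^{(i)}$, yields a quasilinear second-order boundary value problem for $\rho_s^{(i)}$ (schematically $|\nabla\rho_s|^2_{\bar g_s} - s^{-1}\rho_s\Delta_{\bar g_s}\rho_s = 1$ together with $\rho_s|_M=0$) whose coefficients inherit $C^{k-1,\alpha}$ bounds from $\{\bar g_s^{(i)}\}$. The hypothesis $s>n/2+1$ or $s=(n+1)/2$ is exactly what is needed so that the second indicial root $\rho^{2s-n}$ either lies strictly above the order of the system ($2s-n>2$) or is a smooth monomial ($2s-n=1$); regularity therefore propagates without loss and gives $\rho_s^{(i)}\in C^{k,\alpha}$ uniformly. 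The positive boundary scalar curvature $\hat R^{(i)}>0$ provides (again via Lee) a uniform spectral gap that ensures uniqueness and invertibility of the linearized problem. Since $\rho/\rho_s^{(i)}$ is then a positive $C^{k,\alpha}$ function bounded away from $0$, $\bar g^{(i)} = (\rho/\rho_s^{(i)})^2 \bar g_s^{(i)}$ is uniformly $C^{k,\alpha}$-bounded and hence $C^{k,\beta}$-precompact.

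The main obstacle in both directions is the uniform control of the nonlocal scattering datum $G_s^{(i)}|_M$, and of the global part of $\rho_s^{(i)}$ in the reverse direction, which is not determined by the formal Taylor expansion at $M$ but by the global Poincar\'e-Einstein geometry. This is exactly where the positive Yamabe and positive boundary scalar curvature hypotheses enter: they supply a uniform spectral gap for $-\Delta_+^{(i)}$ that makes the scattering or the Einstein equation uniformly solvable, yielding uniform Dirichlet-to-Neumann-type bounds. Once that step is in place, the remainder of the argument is a careful combination of Schauder estimates in the $0$-calculus, asymptotic expansions at the two indicial roots $0$ and $2s-n$, and Arzel\`a--Ascoli.
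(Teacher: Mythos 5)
Your overall framework for part (a) — reducing to the conformal factor, degenerate (0-calculus) Schauder theory, and a comparison of $k+\alpha$ with the indicial gap $2s-n$ — is the same as the paper's, but your treatment of case (a.2) has a genuine gap. You justify "no loss" for $2s-n=2N-1$ by saying $\rho^{2s-n}$ is a smooth monomial and that log terms only occur when $2s-n\in 2\mathbb{N}$. Smoothness of the monomial cannot be the criterion (it would apply equally to even integers, contradicting (a.3)), and the absence of the logarithmic/resonant obstruction at odd integer order is a parity statement that is proved, for \emph{smooth} Poincar\'e--Einstein metrics, in the geodesic gauge using evenness of the metric expansion; here $\bar{g}$ is only $C^{k,\alpha}$ and $\rho$ is an arbitrary fixed smooth defining function, so neither the parity argument nor a full polyhomogeneous expansion $u_s\sim F_s+\rho^{2s-n}G_s$ with coefficients of the required regularity is available. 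The paper only constructs a finite-order approximate solution (via an extension theorem) and corrects it by inverting $\Delta_++s(n-s)$ on weighted H\"older spaces, which by itself caps the regularity at $l+\beta<2s-n$ whenever $k+\alpha\geq 2s-n$ and $2s-n\in\mathbb{N}$; the recovery of full $C^{k,\alpha}$ regularity in case (a.2) is achieved by a different mechanism entirely, namely the vanishing Q-curvature identity $Q_{2N}^{\bar{g}_s}=0$ combined with the conformal scalar-curvature equation and a Schauder bootstrap. That idea is missing from your proposal, and without it (a.2) is unproved. You also never establish the uniform positive lower bound of $\rho_s^{(i)}/\rho$, which is needed so the conformal factors do not degenerate along the sequence.

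For part (b) the mechanism you propose is partly wrong. The hypothesis $\hat{R}^{(i)}>0$ is not used (only) to produce a spectral gap: its role is to force the geometric positivity $\bar{J}_s>0$ on $\overline{X}$ when $s>\frac{n}{2}+1$, respectively $H_s>0$ and $\bar{T}_s>0$ when $s=\frac{n+1}{2}$, via maximum-principle arguments; this gives the a priori bounds $|d\rho_s|_{\bar{g}_s}\leq 1$ and $\|\rho_s^{(i)}\|_{L^\infty}\leq \mathrm{diam}(\overline{X},\bar{g}_s^{(i)})$, which are the starting point for the classical Schauder bootstrap of the coupled system for $(\rho_s,\bar{T}_s)$ with respect to the compact family $\{\bar{g}_s^{(i)}\}$. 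Your explanation of the restriction on $s$ through indicial roots ("$2s-n>2$ or $2s-n=1$") is not what is needed: the restriction enters through the positivity lemmas (the boundary asymptotics $\bar{J}_s|_M=\frac{2s-n-1}{2s-n-2}\hat{J}$ degenerate at $s=\frac{n}{2}+1$) and through the fact that for $s\in(\frac{n}{2},\frac{n}{2}+1)$, $s\neq\frac{n+1}{2}$, positivity of the fractional Q-curvature rules out the assumed $C^{k,\alpha}$ compactness. Moreover, the assertion "regularity therefore propagates without loss" is not a substitute for the required estimates: you still need the uniform $L^\infty$ bound on $\bar{T}_s^{(i)}$ (a blow-up/contradiction argument in the case $s=\frac{n+1}{2}$), the uniform lower bound $\rho_s^{(i)}\geq c\rho$ (compactness plus strong maximum principle, again using the positivity), and a quotient/extension lemma showing $\rho_s^{(i)}/\rho\in C^{k,\alpha}(\overline{X})$ from $\rho_s^{(i)}\in C^{k+1,\alpha}(\overline{X})$ with $\rho_s^{(i)}|_M=0$, before you can conclude that $(\rho/\rho_s^{(i)})^2\bar{g}_s^{(i)}$ is uniformly controlled. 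None of these steps appears in your sketch.
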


The proof of statement (a) is mainly based on the uniform  H\"{o}lder norm estimates  for the adapted  (including Fefferman-Graham)  boundary defining function $\rho_s$ and the ratio  $\rho_s/\rho$, where $\rho$ is the fixed boundary defining function (background coordinate). 
Here the main difficulty is that the equations  which determine $\rho_s$ and  $\rho_s/\rho$ are degenerate near the boundary and hence  the classical Schauder estimates fail. 
We overcome this difficulty by combining asymptotic analysis and delicate extension theorem (Theorem \ref{thm.ext})  with improved Schauder estimates on weighted  H\"{o}lder spaces (Definition \ref{definition}) for uniformly degenerate operators given by Lee in \cite{Le2}. These improved  Schauder estimates  are only valid for the weight in a certain indicial gap determined by the  operators. Hence for $k+\alpha\geq 2s-n$,  it implies a loss of regularity in general. However, while $2s-n=2N-1$ for some positive integer $N$, we can recover the full regularity by using the Q-curvature equation $Q_{2N}^{\bar{g}_s}=0$. Otherwise, we  can view this loss of regularity as the result of the fractional order term $\rho^{2s-n}$ in the asymptotical expansion of $\rho_s$ at boundary. 

The proof of statement (b) is mainly by exploring the geometric positivities for $\bar{g}_s$ under the assumption $\hat{R}>0$, which gives $L^{\infty}$ estimates for $\rho_s$ and $|d\rho_s|_{\bar{g}_s}$ directly. In this case, the equations that determine $\rho_s$ are uniformly elliptic with background metric $\bar{g}_s$, and hence higher order norm estimates follow from the classical Schauder estimates. 
Moreover, 	$\hat{R}>0$ also implies that for $\gamma\in(0,1)$ the fractional Q-curvature $Q^{\hat{g}}_{2\gamma}$ is  positive on the boundary  by \cite{GQ1}. Hence if $s \in (\frac{n}{2}, \frac{n}{2}+1)$,  $\{\bar{g}_s\}$  can not have $C^{k,\alpha}(\overline{X})$ regularity except for $s=\frac{n+1}{2}$. This is why we only consider $s>\frac{n}{2}+1$ or $s=\frac{n+1}{2}$.

\vspace{0.1in}
Theorem \ref{thm.main} implies the following corollary directly. 

\begin{corollary}\label{cor.main}
Suppose $\{(X,g^{(i)}_+)\}$ is a family of conformally compact Poincar\'{e}-Einstein manifold with conformal infinity $\{(M,[\hat{g}^{(i)}])\}$ satisifying $\mathcal{Y}(M,[\hat{g}^{(i)}])\geq 0$. 
Let $\rho$ be a fixed smooth defining function and  $ \rho^2g^{(i)}_+|_{TM}=\hat{g}^{(i)}. $
Let $N,K$ be two positive integers and $s=\frac{n}{2}+N-\frac{1}{2}, t=\frac{n}{2}+K-\frac{1}{2}$. 
Assume that
$$
\bar{g}_s^{(i)}|_{TM}=\bar{g}^{(i)}_{t}|_{TM}=\hat{g}^{(i)}
$$
satisfying $\hat{R}^{(i)}>0$ for all $i$. 
If $\{\bar{g}_s^{(i)}\}$ is compact in $C^{k,\alpha}(\overline{X})$ $(k\geq 3, 0<\alpha<1)$   topology, then $\{\bar{g}_{t}^{(i)}\}$ is also compact in $C^{k,\beta}(\overline{X})$ $(0<\beta<\alpha)$   topology. 
\end{corollary}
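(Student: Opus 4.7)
The plan is to derive Corollary \ref{cor.main} as a direct two-step application of Theorem \ref{thm.main}, using the fixed smooth background defining function $\rho$ as a bridge between the two adapted compactifications $\bar{g}_s^{(i)}$ and $\bar{g}_t^{(i)}$.

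First I would verify that the parameter $s=\frac{n}{2}+N-\frac{1}{2}$ satisfies the hypothesis of Theorem \ref{thm.main}(b). If $N=1$ then $s=\frac{n+1}{2}$, which is explicitly allowed; if $N\geq 2$ then $s\geq \frac{n}{2}+\frac{3}{2}>\frac{n}{2}+1$, so again the hypothesis holds. Combined with $\hat{R}^{(i)}>0$ and the assumed $C^{k,\alpha}(\overline{X})$ compactness of $\{\bar{g}_s^{(i)}\}$, Theorem \ref{thm.main}(b) yields that $\{\bar{g}^{(i)}\}$ is compact in $C^{k,\beta'}(\overline{X})$ topology for every $0<\beta'<\alpha$. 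At this step the key ingredient from the main theorem is that $\hat{R}^{(i)}>0$ makes the defining-function equation uniformly elliptic in the background $\bar{g}_s^{(i)}$, allowing classical Schauder estimates to preserve regularity.

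Next I would apply Theorem \ref{thm.main}(a) with parameter $t=\frac{n}{2}+K-\frac{1}{2}$, starting now from the $C^{k,\beta'}(\overline{X})$ compactness of $\{\bar{g}^{(i)}\}$. The crucial arithmetic observation is that $2t-n=2K-1$ is a positive odd integer, so the bad case (a.3) (with its loss of regularity caused by the fractional boundary term $\rho^{2t-n}$) cannot occur: we land either in (a.1) when $k+\beta'<2K-1$ or in (a.2) when $k+\beta'\geq 2K-1$. In both subcases the conclusion is the same, namely that $\{\bar{g}_t^{(i)}\}$ is compact in $C^{k,\beta}(\overline{X})$ topology for every $0<\beta<\beta'$. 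Choosing $\beta'$ sufficiently close to $\alpha$, and then $\beta<\beta'$, gives the desired $C^{k,\beta}$ compactness for any $0<\beta<\alpha$.

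There is no genuine obstacle here beyond checking that the hypotheses of (a) and (b) are compatible with $s,t$ being half-odd integers of the specified form; the content of the corollary is really that the odd-integer choices $2s-n=2N-1$ and $2t-n=2K-1$ precisely avoid the regularity loss in part (a) of Theorem \ref{thm.main}, and that the assumption $\hat{R}^{(i)}>0$ is exactly what part (b) demands. The appeal to the vanishing Q-curvature identity $Q_{2N}^{\bar{g}_s}=0$, which in the proof of Theorem \ref{thm.main}(a) is what rescues full regularity in case (a.2), is thus invoked implicitly and is the reason why one can transfer regularity from one half-odd-integer adapted compactification to another without incurring any fractional-exponent loss.
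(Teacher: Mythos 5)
Your proposal is correct and is exactly the intended derivation: the paper states that Corollary \ref{cor.main} follows directly from Theorem \ref{thm.main}, namely by first applying part (b) (noting $s=\frac{n}{2}+N-\frac{1}{2}$ is either $\frac{n+1}{2}$ or exceeds $\frac{n}{2}+1$, and $\hat{R}^{(i)}>0$) to get $C^{k,\beta'}$ compactness of $\{\bar{g}^{(i)}\}$, and then part (a) with $2t-n=2K-1$ odd so that only cases (a.1)--(a.2) occur and no regularity is lost. Your bookkeeping of the exponents ($\beta<\beta'<\alpha$) and the role of $Q_{2K}^{\bar{g}_t}=0$ in case (a.2) matches the paper's argument.
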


The outline of the paper is as follows.
In Section \ref{sec.2}, we review some basic properties for Poincar\'{e}-Einstein manifolds and its conformal compactification.
In Section \ref{sec.3}, we introduce the improved Schauder estimates and invertibility theorem for $\Delta_++s(n-s)$ given by Lee in \cite{Le2}.
In Section \ref{sec.4}, we prove the global regularity theorem for $\bar{g}_s$. 
In Section \ref{sec.5}, we investigate the geometric positivity results for $\bar{g}_s$ under the assumption $\hat{R}>0$ for $\hat{g}=\bar{g}_s|_{TM}$. 
In Section \ref{sec.6}, we prove Theorem \ref{thm.main}. 
In Section \ref{sec.app}, we provide an extension theorem, which is used in the proof of global regularity of $\bar{g}_s$.

\vspace{0.1in}
\textbf{Achknowlegement:} The first author wants to thank Professor Sun-Yung Alice Chang for proposing the question about the compactness in different type of compactifications for Poincar\'{e}-Einstein manifolds.

\vspace{0.2in}
\section{Poincar\'{e}-Einstein Manifold and Its Conformal Compactification}\label{sec.2}

Suppose $(X^{n+1}, g_+)$ ($n\geq 3$) is a $C^{k,\alpha}$ conformally compact Poincar\'{e}-Einstein manifold with conformal infinity $(M, [\hat{g}])$ with $k\geq 3$, $0<\alpha<1$. Let $\rho$ be a fixed smooth boundary defining function. Denote 
$$
\bar{g}=\rho^2g_+
\quad\textrm{and}\quad
\bar{g}|_{M}=\hat{g}. 
$$

For any $l\geq 0$ and $0\leq \beta<1$, denote $C^{l,\beta}(X)$ and $C^{l,\beta}(\overline{X})$ the classical H\"{o}lder spaces consists of functions with local and uniform $C^{l,\beta}$ H\"{o}lder continuity. 

\subsection{Conformal Transformation}
Let $\bar{R}, \bar{R}_{ij}$ be the scalar curvature and Ricci curvature tensor of $\bar{g}$. Let $\bar{J}=\frac{1}{2n}\bar{R}$ and $\bar{E}_{ij}$ be the trace free part of  $\bar{R}_{ij}$. Let $H$ be the boundary mean curvature for $(\overline{X}, \bar{g})$. 
Notice that for $k\geq 3$, a $C^{k,\alpha}$ compactification of Poincar\'{e}-Einstein manifold, $(\overline{X}, \bar{g})$ always has an umbilical boundary. Hence the second fundamental form of the boundary is 
$$
\Pi=\frac{1}{n} {H}\hat{g}.
$$
Let $\Delta_+$ and $\bar{\Delta}$ be the Beltrami-Laplace operators for $g_+$ and $\bar{g}$ respectively. The conformal transformation implies the following relation between the two operators:
\begin{equation}\label{eq.Delta}
\Delta_+=\rho^2\bar{\Delta}-(n-1)\rho\langle d\rho, d \cdot\rangle_{\bar{g}};
\end{equation}
and it also gives the scalar curvature and Ricci curvature of $\bar{g}$ in terms of $\rho$: 
\begin{equation}\label{eq.R}
\bar{J}= -\rho^{-1}\bar{\Delta}\rho -\frac{n+1}{2} \rho^{-2}\left(1-|d\rho|^2_{\bar{g}}\right),
\end{equation}
\begin{equation}\label{eq.E}
	\bar{E}= -(n-1) \rho^{-1}\left(\bar{\nabla}^2\rho-\frac{1}{n+1}(\bar{\Delta}\rho) \bar{g}\right). 
\end{equation}

Notice that by assuming $\bar{g}\in C^{k,\alpha}(\overline{X})$, we have $|d\rho|^2\in C^{k,\alpha}(\overline{X})$ and  (\ref{eq.R}) implies that
$|d\rho|_{\bar{g}}^2=1$ on $M$. This means the sectional curvature of $g_+$ approaches to $-1$ at the boundary by \cite{Ma1}. 
We also define the \textit{T-curvature} of $\bar{g}$ as follows: 
\begin{equation}\label{eq.T}
\bar{T}=\rho^{-1}  \left(1-|d\rho|^2_{\bar{g}}\right).
\end{equation}
Then 
$\bar{T}\in C^{k,\alpha}(X)\cap C^{k-1,\alpha}(\overline{X}). $
By (\ref{eq.R}) again, 
$$
\rho\bar{J}=-\bar{\Delta}\rho-\frac{n+1}{2}T. 
$$
Therefore, 
$\bar{J},\bar{E} \in C^{k-1,\alpha}(X)\cap C^{k-2,\alpha}(\overline{X})$.

\subsection{GJMS Operators}
We recall the GJMS operators for $(X,g_+)$ and $(\overline{X}, \bar{g})$ here. Since  $g_+$ and $\bar{g}$ are (conformal) Einstein metrics, the GJMS operators of all orders are well defined for them. Let $P^+_{2N}$ and $\bar{P}_{2N}$  be the GJMS operators of order $2N$ w.r.t. $g_+$ and $\bar{g}$ respectively.
Here we only consider the case $2\leq 2N\leq k$ because of the finite regularity. Then
$$
P^+_{2N}=\Pi_{j=1}^N
\left(-\Delta_+-\frac{(n+2N-4j+3)(n-2N+4j-3)}{4}
\right),
$$
$$
\bar{P}_{2N}=\rho^{-\frac{n+1}{2}-N} P^+_{2N} \rho^{\frac{n+1}{2}-N}
=\left(-\bar{\Delta}\right)^N + L. O. T. ,
$$
where $L.O.T.$ denotes a differential operator of order $\leq 2N-2$. 
The \textit{Q-curvature} of the corresponding order is given by 
$$
\bar{Q}_{2N}=\frac{2}{n+1-2N} \bar{P}_{2N}(1). 
$$
If $2N=n+1$, then the formula is understood as a formal "limit" of $n+1\rightarrow 2N$.  See \cite{GJMS, FG1, Go1, CC1} for more details. In particular, for $N=1$, 
$$
\bar{Q}_2=\frac{1}{2n}\bar{R}=\bar{J}. 
$$
For $N=2$, 
$$
\bar{Q}_4=-\bar{\Delta}\bar{J}+\frac{n+1}{2}\bar{J}^2-2|\bar{A}|^2, 
$$
where $\bar{A}$ is the Schouten tensor of $\bar{g}$, i.e.
$$
\bar{A}=\frac{1}{n-1}\left(\bar{R}_{ij}-\bar{J}\bar{g}_{ij}\right). 
$$
Recursive formulae given in \cite{Ju1} show that for all $N \geq 1$, 
$$
\bar{Q}_{2N}=(-\Delta)^{N-1}\bar{J} + L.O.T., 
$$
where $L.O.T$ is a polynomial of derivatives of metric $\bar{g}$ of order $\leq 2N-2$ with coefficients given by $\bar{g}$ and $\bar{g}^{-1}$ in local coordinates.

\section{Analysis on Poincar\'{e}-Einstein manifold}\label{sec.3}
In this section, we recall some analysis on a $C^{k,\alpha}$ conformally compact Poincar\'{e}-Einstein manifold $(X^{n+1}, g_+)$. Since our background metric $g_+$ has only finite regularity, we will mainly follow from \cite{Le2}. Let $\rho$ be the fixed smooth boundary defining function and  
 $$\bar{g}=\rho^2g_+, \quad\hat{g}=\bar{g}|_{TM}.$$
Denote $X_{\epsilon}=\{0<\rho<\epsilon\}$ for $\epsilon>0$ small. Then $X\backslash X_{\epsilon}$ is a compact subset of $X$.

\subsection{M\"{o}bius Coordinates}
First, we can cover $M$ by finitely many smooth coordinate charts $(\Omega, \Theta)$, called the \textit{background coordinates}, where $\Theta=(\rho,\theta)=(\rho, \theta^1, \cdots,  \theta^n)$. 
After fixing a choice of such background coordinates $(\Omega, \Theta)$, they form a finite covering of a neighbourhood $W$ of $M$ in $\overline{X}$. Then there is a positive number $c>0$ such that $X_c\subset W$ and every point $p\in X_c$ is contained in a background coordinate chart  containing a set  of the following form
$$
\{(\rho,\theta): |\theta-\theta_p|<c, 0\leq \rho <c\}.
$$

Let $\mathbb{H}$ be the half plane model of the hyperbolic space with coordinates $(x,y)=(x,y^1, \cdots y^n)$. Let $g_{\mathbb{H}}$ be the hyperbolic metric $g_{\mathbb{H}} =x^{-2}(dx^2+dy^2)$. Let $B_r$ be the geodesic ball in $\mathbb{H}$ with radius $r$ and center $(1,0,\cdots, 0)$. 
If $p_0\in X_{c/8}$, then choose the background chart containing $p_0$ and define  a \textit{M\"{o}bius chart} centered at $p_0$ by
$$
\Phi_{p_0}: B_2\longrightarrow X, \quad (\rho,\theta)=\Phi_{p_0}(x,y)=(\theta_0+\rho_0x, \rho_0y), 
$$
where $(\rho_0,\theta_0)$ is the background coordinates of $p_0$. It is easy to check that for all $0<r\leq 2$, $V_r(p_0)=\Phi_{p_0}(B_r)\subset\Phi_{p_0}(\overline{B_2})\subset X_c$. We can also choose finite smooth coordinate charts $\Phi_i: B_2\longrightarrow X$ such that the sets $\Phi_i(B_1)$ cover a neighborhood of $X\backslash X_{c/8}$. For consistency, we will also call these "M\"{o}bius Charts". Without loss of generality, we can require that $\Phi$ extends smoothly to a neighborhood of $B_2$.  We recall the following two properties for M\"{o}bius charts given in \cite{Le2}. 

\begin{lemma}\label{lem.MC1}
There exists a constant $C>0$ such that if $\Phi_{p_0}: B_2\rightarrow X$ is any M\"{o}bius chart, then
$$
\|\Phi^*_{p_0} g_+ - g_{\mathbb{H}}\|_{C^{k,\alpha}(\overline{B_2})}\leq C, 
\quad
\sup_{B_2}\left(|(\Phi^*_{p_0}g_+)^{-1}g_{\mathbb{H}}|+|(\Phi^*_{p_0}g_+)g_{\mathbb{H}}^{-1}|\right) \leq C, 
$$	
where $\|\cdot\|_{C^{k,\alpha}(\overline{B_2})}$ is the classical Euclidean H\"{o}lder norm.
\end{lemma}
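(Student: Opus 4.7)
The plan is to compute $\Phi_{p_0}^{*}g_+$ explicitly in the $(x,y)$ coordinates and exploit the scaling built into the M\"{o}bius chart. I would first dispose of the finitely many ``interior'' charts $\Phi_i:B_2\to X$ whose images avoid a fixed neighborhood of $M$: on these, $g_+$ is smooth on a compact subset of $X$, and both inequalities follow from a trivial compactness argument applied to this finite collection of smooth tensor fields.

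For a genuine boundary M\"{o}bius chart centered at $p_0\in X_{c/8}$, I write $g_+=\rho^{-2}\bar{g}$ and pull back through $\Phi_{p_0}(x,y)=(\rho_0 x,\theta_0+\rho_0 y)$. Since $d\rho=\rho_0\,dx$ and $d\theta^i=\rho_0\,dy^i$, the pullback of $\bar{g}$ picks up a factor $\rho_0^{2}$ which exactly cancels the $\rho_0^{-2}$ in $\rho^{-2}=\rho_0^{-2}x^{-2}$, yielding the clean identity
$$
\Phi_{p_0}^{*}g_+ \;=\; x^{-2}\,\tilde{g},\qquad \tilde{g}_{ab}(x,y)=\bar{g}_{ab}\!\left(\Phi_{p_0}(x,y)\right),
$$
and hence $\Phi_{p_0}^{*}g_+-g_{\mathbb H}=x^{-2}(\tilde{g}-\delta)$ with $\delta=dx^2+\sum(dy^i)^2$. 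On $\overline{B_2}$ the coordinate $x$ is uniformly bounded above and away from $0$, so $x^{-2}$ has harmless $C^{k,\alpha}$ norm; the task reduces to estimating $\|\tilde{g}\|_{C^{k,\alpha}(\overline{B_2})}$ independently of $p_0$. This is where the M\"{o}bius scaling helps: each derivative of $\tilde{g}$ in $(x,y)$ brings down a factor $\rho_0\le c$ from the chain rule, while the $C^{0,\alpha}$ seminorm picks up $\rho_0^{\alpha}\le c^{\alpha}$. Since $\bar{g}$ is a fixed tensor in $C^{k,\alpha}(\overline{X})$ and $\Phi_{p_0}(\overline{B_2})\subset X_c$ lies inside a single background chart, I obtain $\|\tilde{g}\|_{C^{k,\alpha}(\overline{B_2})}\le C\|\bar{g}\|_{C^{k,\alpha}(\overline{X})}$, which gives the first inequality.

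For the second inequality, the positive-definiteness of $\bar{g}$ on the compact manifold $\overline{X}$ yields uniform upper and lower eigenvalue bounds, so $\tilde{g}$ is uniformly equivalent to $\delta$ on $\overline{B_2}$; dividing by $x^2$ preserves this equivalence and controls $|(\Phi_{p_0}^{*}g_+)^{-1}g_{\mathbb H}|+|(\Phi_{p_0}^{*}g_+)g_{\mathbb H}^{-1}|$. The main point to verify carefully is the geometric bookkeeping in the definition of the M\"{o}bius charts: namely that $p_0\in X_{c/8}$ together with $\Phi_{p_0}(\overline{B_2})\subset X_c$ really does keep the image inside a fixed smooth background chart with uniform control on $\bar{g}$, and that $x$ stays in fixed positive bounds on $\overline{B_2}$ so no blow-up of $x^{-2}$ spoils the H\"{o}lder norms. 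Once this setup is in place the rest is a direct chain-rule computation, with no subtle analytic input needed.
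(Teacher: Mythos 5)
Your argument is correct: the identity $\Phi_{p_0}^{*}g_+=x^{-2}\,(\bar g\circ\Phi_{p_0})$ together with the facts that $x\in[e^{-2},e^{2}]$ on $\overline{B_2}$, that $\rho_0<c/8$ keeps $\Phi_{p_0}(\overline{B_2})$ inside a fixed background chart, and that each derivative/H\"older quotient only gains factors of $\rho_0$, gives exactly the asserted bounds (the lemma only claims boundedness, not smallness, so this suffices), and the uniform eigenvalue bounds for $\bar g$ handle the second inequality. The paper itself offers no proof and simply quotes the lemma from Lee \cite{Le2}; your scaling computation is essentially that standard argument (note the paper's displayed formula for $\Phi_{p_0}$ has the coordinates transposed, and you used the intended map $\rho=\rho_0x$, $\theta=\theta_0+\rho_0y$).
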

\begin{lemma}\label{lem.MC2}
There exists a countable collection of points $p_i\subset X$ and corresponding M\"{o}bius chart: $\Phi_i=\Phi_{p_i}: B_2\longrightarrow V_2(p_i)$ such that $\{V_1(p_i)\}$ cover $X$ and  $\{V_2(p_i)\}$  are uniformly locally finite.
\end{lemma}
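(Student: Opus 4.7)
The plan is to build the countable family of M\"{o}bius charts by combining a finite covering of the compact interior with a countable covering of the boundary collar, the latter obtained from a maximal separated net in the complete metric $g_+$, and then to verify uniform local finiteness via a volume comparison against the model hyperbolic metric.

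The interior piece $X\setminus X_{c/8}$ is precompact in $X$ and is already covered by the unit-ball images $\Phi_i(B_1)$ of the finitely many smooth coordinate charts introduced in the paragraph preceding the statement; local finiteness within this finite subcollection is automatic. It remains to handle the boundary collar $X_{c/8}$.

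By Lemma \ref{lem.MC1}, the pullback $\Phi_{p_0}^{\ast} g_+$ is uniformly bi-Lipschitz to $g_{\mathbb{H}}$ on $\overline{B_2}$, with constants independent of $p_0 \in X_{c/8}$. This yields constants $0<\delta_1<\delta_2$, depending only on the constant $C$ of Lemma \ref{lem.MC1} and the dimension, such that
\[
B^+(p_0,\delta_1) \subset V_1(p_0) \subset V_2(p_0) \subset B^+(p_0,\delta_2)
\]
for every $p_0 \in X_{c/8}$, where $B^+(p,r)$ denotes a $g_+$-geodesic ball. Using Zorn's lemma (or a greedy algorithm) I would extract a maximal $\delta_1$-separated subset $\{p_i\} \subset X_{c/8}$ in the $g_+$-distance. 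Maximality forces every point of $X_{c/8}$ to lie within $g_+$-distance $\delta_1$ of some $p_i$, giving $\bigcup_i V_1(p_i) \supset X_{c/8}$; taking the union with the finite interior family completes the cover of $X$. Countability of $\{p_i\}$ follows because the balls $B^+(p_i,\delta_1/2)$ are pairwise disjoint and $X$ is second countable.

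The main obstacle, and the last step, is uniform local finiteness. If $V_2(p_i) \cap V_2(p_j) \neq \emptyset$, then $d_{g_+}(p_i,p_j) \leq 2\delta_2$ by the inclusions above, so all such $p_j$ lie in $B^+(p_i,2\delta_2)$; the disjoint $g_+$-balls $B^+(p_j,\delta_1/2)$ for these indices fit inside $B^+(p_i,2\delta_2+\delta_1/2)$. Covering this larger ball by M\"{o}bius charts and invoking Lemma \ref{lem.MC1} converts the count into a Euclidean volume comparison on $B_2 \subset \mathbb{H}$, yielding a uniform upper bound $N = N(\delta_1,\delta_2,n,C)$ on the number of such $j$. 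The delicate point is that this upper bound must be genuinely independent of $i$, which is precisely what the uniform constants in Lemma \ref{lem.MC1} provide.
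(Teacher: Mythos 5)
Your overall strategy --- finitely many interior charts plus a maximal $\delta_1$-separated net in $X_{c/8}$ for $d_{g_+}$, followed by a packing argument --- is sound, and most of the reductions are correct: the two-sided inclusions $B^+(p_0,\delta_1)\subset V_1(p_0)\subset V_2(p_0)\subset B^+(p_0,\delta_2)$ do follow from the uniform bi-Lipschitz control in Lemma \ref{lem.MC1}, maximality gives the covering of the collar, and countability follows from disjointness of the half-radius balls. Note that the paper does not prove this lemma but quotes it from \cite{Le2}, where the points are produced explicitly as a dyadic lattice in the background coordinates $(\rho,\theta)$ and local finiteness is checked by a direct half-space computation; your net-plus-packing route is a legitimate alternative in principle.

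The gap is in the final counting step. You pack the disjoint balls $B^+(p_j,\delta_1/2)$ into $B^+(p_i,2\delta_2+\delta_1/2)$ and propose to bound the volume of the large ball by ``covering this larger ball by M\"obius charts.'' Lemma \ref{lem.MC1} gives a uniform lower bound for $\mathrm{vol}_{g_+}(B^+(p_j,\delta_1/2))$ and a uniform upper bound for the volume of a single chart image, but it does not provide an $i$-independent bound on the number of M\"obius charts needed to cover $B^+(p_i,2\delta_2+\delta_1/2)$: this radius exceeds what one chart controls, and a uniform covering-number bound for such balls is a statement of exactly the same nature as the uniform local finiteness you are trying to prove, so as written the step is circular. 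What is missing is an $i$-independent upper bound on $\mathrm{vol}_{g_+}\bigl(B^+(p_i,R)\bigr)$ for the fixed radius $R=2\delta_2+\delta_1/2$. This can be supplied either by Bishop--Gromov comparison, since $g_+$ is complete with $\mathrm{Ric}_{g_+}=-n g_+$, so $\mathrm{vol}_{g_+}(B^+(p,R))$ is dominated by the volume of the radius-$R$ ball in $\mathbb{H}^{n+1}$; or by a direct computation in a background chart, using $g_+\geq C^{-1}\rho^{-2}(d\rho^2+|d\theta|^2)$ to confine $B^+(p_i,R)$ to a coordinate region $\{c_1\rho_0\leq\rho\leq c_2\rho_0,\ |\theta-\theta_0|\leq c_3\rho_0\}$ whose $g_+$-volume is bounded independently of $\rho_0$. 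With either ingredient the packing argument closes; you should also record the easy bookkeeping for intersections involving the finitely many interior charts, which contribute only a bounded number of overlaps since a $\delta_1$-separated set in a fixed compact subset of $X$ is finite.
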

In this paper, $(\overline{X},\rho)$ are always fixed and hence  the choice of background coordinates and M\"{o}bius charts can also be fixed, while dealing with different metrics. 
\begin{remark}\label{rem.1}
If $\{g_+^{(i)} \}$ is a set of $C^{k,\alpha}$ conformally compact Poincar\'{e}-Einstein metric such that $\{\bar{g}^{(i)}=\rho^2g_+^{(i)} \}$ is compact in the $C^{k,\alpha}(\overline{X})$  topology, then Lemma \ref{lem.MC1} holds for all $i$ with a uniform constant $C>0$. 
\end{remark}

\subsection{Function Spaces}
For $l\geq 0$ and $0\leq \beta<1$, recall that $C^{l,\beta}(X)$ and $C^{l,\beta}(\overline{X})$ are the classical H\"{o}lder spaces consisting of functions with local and uniform $C^{l,\beta}$ H\"{o}lder continuity respectively. Here we introduce two types of function spaces dealing with the asymptotical behavior of functions at the boundary.

For  $0\leq \delta \leq l+\beta$, define a subspace  $C^{l,\beta}_{(\delta)}(\overline{X}) \subset C^{l,\beta}(\overline{X}) $  by
$$
C^{l,\beta}_{(\delta)}(\overline{X})=\{u\in C^{l,\beta}(\overline{X}): u=O(\rho^{\delta})\}. 
$$
It is obviously that $C^{l,\beta}_{(0)}(\overline{X})=C^{l,\beta}(\overline{X})$.  The following Lemma is proved in  \cite[Lemma 3.1]{Le2}.

\begin{lemma}[Lee]\label{lem.FS3}
Let $0\leq \beta<1$ and $0\leq \delta \leq l+\beta$.
\begin{itemize}
\item[(1)] 	
$ \displaystyle C^{l,\beta}_{(\delta)}(\overline{X})
=\{u\in C^{l,\beta}(\overline{X}): \partial_{\rho}^ju|_{M}=0, \ \forall \ 0\leq j<\delta \}.  $
\item[(2)] 
$\displaystyle C^{l,\beta}_{(\delta)}(\overline{X})$ 
is a closed subspace of $C^{l,\beta}(\overline{X})$. 

\item[(3)]
If $j$ is a positive integer and $j-1<\delta\leq j\leq l$, then 
$\displaystyle C^{l,\beta}_{(\delta)}(\overline{X})=C^{l,\beta}_{(j)}(\overline{X})$. 

\item[(4)]
If $l<\delta \leq l+\beta$, then 
$\displaystyle  C^{l,\beta}_{(\delta)}(\overline{X})=C^{l,\beta}_{(l+\beta)}(\overline{X})$. 

\end{itemize}
\end{lemma}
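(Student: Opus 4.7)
The plan is to reduce everything to Taylor expansion in the $\rho$-direction. Since $\rho$ is a smooth defining function, near each boundary point one may work in background coordinates $(\rho,\theta)$; for $u \in C^{l,\beta}(\overline{X})$, the partial derivatives $\partial_\rho^j u$ exist and are continuous for $j \leq l$, and $\partial_\rho^l u$ is $\beta$-Hölder uniformly up to $M$. The key observation is that the condition $u=O(\rho^\delta)$ interpolates cleanly between the integer vanishing conditions $\partial_\rho^j u|_M=0$ and the Hölder decay of the top derivative.

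First I would prove (1). For the direction ``$\Rightarrow$'': if $u = O(\rho^\delta)$ but $\partial_\rho^j u(0,\theta_0) \neq 0$ for some integer $j < \delta$, then the Taylor expansion of $u$ in $\rho$ at $(0,\theta_0)$ contains a term of order $\rho^j$ with nonzero coefficient, contradicting the decay $O(\rho^\delta)$. For ``$\Leftarrow$'', let $m$ be the largest integer strictly less than $\delta$, so $m+1 \geq \delta$. If $m+1 \leq l$, Taylor's theorem with Lagrange remainder gives
$$u(\rho,\theta) = \frac{\rho^{m+1}}{(m+1)!}\, \partial_\rho^{m+1} u(\rho^*, \theta)$$
for some $\rho^*\in(0,\rho)$, since all lower-order Taylor coefficients vanish by hypothesis; boundedness of $\partial_\rho^{m+1}u$ then yields $u = O(\rho^{m+1}) = O(\rho^\delta)$. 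If instead $m=l$ (which forces $l < \delta \leq l+\beta$), the same identity reads $u(\rho,\theta) = \frac{\rho^l}{l!}\partial_\rho^l u(\rho^*,\theta)$; since $\partial_\rho^l u|_M = 0$ and $\partial_\rho^l u$ is $\beta$-Hölder,
$$|\partial_\rho^l u(\rho^*,\theta)| = |\partial_\rho^l u(\rho^*,\theta) - \partial_\rho^l u(0,\theta)| \leq C(\rho^*)^\beta \leq C\rho^\beta,$$
giving $u = O(\rho^{l+\beta}) = O(\rho^\delta)$.

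Item (2) is then immediate: since $j < \delta \leq l+\beta < l+1$ forces the integer $j \leq l$, convergence in $C^{l,\beta}(\overline{X})$ preserves the boundary values of every derivative appearing in the characterization of (1), so the vanishing conditions pass to $C^{l,\beta}$-limits. Item (3) is just a counting observation: for $j-1<\delta\leq j \leq l$, the set of nonnegative integers strictly less than $\delta$ equals $\{0,1,\ldots,j-1\}$, which is precisely the defining list for $C^{l,\beta}_{(j)}(\overline{X})$. For (4), in the range $l<\delta\leq l+\beta$ the condition in (1) reduces to the fixed condition $\partial_\rho^j u|_M=0$ for $0\leq j\leq l$, independently of $\delta$, and the $m=l$ half of the proof of (1) shows that this already forces $u = O(\rho^{l+\beta})$.

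The only mildly delicate step is the ``$\Leftarrow$'' direction of (1) when $\delta$ sits in the fractional range $l<\delta\leq l+\beta$: there the $(l+1)$-th derivative is unavailable, and one has to extract the extra $\rho^\beta$ factor from Hölder continuity of $\partial_\rho^l u$ rather than from boundedness of a higher derivative. Beyond this minor point, the lemma is essentially a bookkeeping exercise matching integer parts of $\delta$ against pointwise vanishing of normal derivatives, with no geometric input beyond the existence of background coordinates near $M$.
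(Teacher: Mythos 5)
Your argument is correct. The paper itself gives no proof of this lemma --- it simply cites Lee \cite{Le2}, Lemma 3.1 --- and your Taylor-expansion argument in the $\rho$-variable (Lagrange remainder for the integer range, H\"older continuity of $\partial_\rho^l u$ for the fractional range $l<\delta\leq l+\beta$) is essentially the standard proof given there, so there is nothing genuinely different to compare. The only cosmetic polish needed is in the ``$\Rightarrow$'' direction of (1): choose $j$ to be the \emph{smallest} integer with $\partial_\rho^j u(0,\theta_0)\neq 0$, so that the Taylor expansion gives $u(\rho,\theta_0)=\tfrac{\rho^j}{j!}\partial_\rho^j u(0,\theta_0)+o(\rho^j)$ and the contradiction with $u=O(\rho^\delta)$, $\delta>j$, is immediate.
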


\begin{definition}\label{definition}

For $\delta\in\mathbb{R}$, define the \textit{weighted H\"{o}lder space} $\Lambda^{l,\beta}_{\delta}(X)$ as follows 
$$
\Lambda^{l,\beta}_{\delta}(X)=\{u\in C^{l,\beta}(X): \|u\|_{\Lambda^{l,\beta}_{\delta}(X)}<\infty\};
$$
the norm $\|\cdot \|_{\Lambda^{l,\beta}_{\delta}(X)}$ is given by
$$
	\|u\|_{\Lambda^{l,\beta}_{\delta}(X)} :=\|\rho^{-\delta}u\|_{\Lambda^{l,\beta}_0(X)}
	\quad\textrm{and}\quad	
	\|u\|_{\Lambda^{l,\beta}_0(X)} :=\sup_{\Phi} \|\Phi^* u\|_{C^{l,\beta}(\overline{B_2})},
$$
where the supremum is over all M\"{o}bius charts. 
\end{definition}
Notice that $\Lambda^{l,\beta}_0(X) \subsetneq C^{l,\beta}(X)$. 
The main relation between $C_{(\delta)}^{l,\beta}(\overline{X})$ and  $\Lambda^{l,\beta}_{\delta}(X)$ can be seen from the following lemma, where the proof is given in \cite[Lemma 3.7]{Le2}. 

\begin{lemma}[Lee]\label{lem.FS2}
Let $0<\beta<1$ and $0\leq \delta \leq l+\beta$. The following inclusions are continuous
\begin{itemize}
\item[(1)]  $ C^{l,\beta}_{(\delta)}(\overline{X}) \hookrightarrow \Lambda^{l,\beta}_{\delta}(X)$. 
\item[(2)]  $\Lambda^{l,\beta}_{l+\beta}(X) \hookrightarrow C^{l,\beta}(\overline{X})$. 
\end{itemize}
\end{lemma}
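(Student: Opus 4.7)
The plan is to exploit the scaling structure of the Möbius charts: under $\Phi_{p_0}$ centered at $p_0 = (\rho_0,\theta_0)$ one has $\rho\circ\Phi_{p_0} = \rho_0\,x$ with $x$ bounded above and away from $0$ on $\overline{B_2}$, and the change-of-variable relation $\partial^\alpha_{x,y}(f\circ\Phi_{p_0}) = \rho_0^{|\alpha|}(\partial^\alpha_{\rho,\theta} f)\circ\Phi_{p_0}$. The content of both inclusions is then that the factor $\rho_0^{\pm\delta}$ produced by the weight $\rho^{\pm\delta}$ is absorbed either by the boundary vanishing of $u$ (in (1)) or by the negative powers of $\rho_0$ produced when rewriting chart derivatives in background coordinates (in (2)).

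For part (1), given $u\in C^{l,\beta}_{(\delta)}(\overline{X})$, I would write
$$
\Phi_{p_0}^{\ast}(\rho^{-\delta}u)=\rho_0^{-\delta}\,x^{-\delta}\,(u\circ\Phi_{p_0}),
$$
note that $x^{-\delta}$ has uniform $C^{l,\beta}(\overline{B_2})$ bound, and reduce the problem to a uniform bound of $\rho_0^{-\delta}\|u\circ\Phi_{p_0}\|_{C^{l,\beta}(\overline{B_2})}$. The key step is a Taylor expansion of $u$ in the normal direction about $(0,\theta_0+\rho_0 y)$: the vanishing conditions $\partial_\rho^j u|_M=0$ for all integers $j<\delta$ given by Lemma \ref{lem.FS3}(1) kill the first $\lceil\delta\rceil$ terms, and the resulting remainder is bounded by $\rho^\delta$ times norms of $u$, with the $\beta$-Hölder continuity of $\partial_\rho^l u$ handling the fractional part when $\delta>l$. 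Parts (3)–(4) of Lemma \ref{lem.FS3} then reconcile the subcases $\delta$ integer, $\delta\in(j-1,j)$, and $\delta\in(l,l+\beta]$.

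For part (2), the plan is to invert the above calculation. Given $u\in\Lambda^{l,\beta}_{l+\beta}(X)$, write $u=\rho^{l+\beta}w$ with $\|w\|_{\Lambda^{l,\beta}_0}\leq C$; evaluating chart derivatives at the chart center $(1,0)\in B_2$ yields
$$
|\partial^\alpha_{\rho,\theta} u(p_0)|=\rho_0^{-|\alpha|}\bigl|\partial^\alpha_{x,y}(\Phi_{p_0}^{\ast} u)(1,0)\bigr|\leq C\,\rho_0^{\,l+\beta-|\alpha|}
$$
for each $|\alpha|\leq l$, so $u$ and its derivatives of order $\leq l-1$ extend continuously to $\overline{X}$ with boundary value $0$, while the top-order derivatives decay like $\rho^\beta$. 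The uniform $\beta$-Hölder continuity of $\partial^\alpha u$ for $|\alpha|=l$ up to the boundary is then verified by splitting pairs $(p,q)$ into three regimes — both inside a common $V_2(p_i)$ (apply the $\Lambda^{l,\beta}_0$ bound directly), both separated from $M$ (classical interior estimate), and points near $M$ lying in different charts (where the $\rho^\beta$ decay of $\partial^\alpha u$ together with the triangle inequality furnishes the Hölder bound) — using the uniform local finiteness from Lemma \ref{lem.MC2}. The principal obstacle I anticipate lies not in the scaling identities themselves but in the careful case bookkeeping when $\delta$ is fractional or lands on an integer, and in transferring Hölder constants through charts of very different scales in the last regime of part (2).
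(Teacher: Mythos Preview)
The paper does not prove this lemma itself: it is attributed to Lee and the proof is cited to \cite[Lemma~3.7]{Le2}. Your proposal is essentially the argument Lee gives in that memoir --- exploiting the scaling identity $\rho\circ\Phi_{p_0}=\rho_0 x$ and $\partial^{\alpha}_{x,y}(f\circ\Phi_{p_0})=\rho_0^{|\alpha|}(\partial^{\alpha}_{\rho,\theta}f)\circ\Phi_{p_0}$, using the Taylor remainder together with the vanishing from Lemma~\ref{lem.FS3} for part~(1), and reading off the pointwise decay $|\partial^{\alpha}u(p_0)|\lesssim\rho_0^{\,l+\beta-|\alpha|}$ at chart centers plus a three-regime case split for the H\"older seminorm in part~(2). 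Your sketch is correct and matches the source the paper defers to; the ``bookkeeping'' you flag is real but routine once one invokes Lemma~\ref{lem.FS3}(3)--(4) to reduce non-integer $\delta$ to the nearest allowed value.
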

\begin{lemma}\label{lem.FS1}
For $0\leq l+\beta<l'+\beta'$ and $\delta<\delta'$, the inclusion $\Lambda^{l',\beta'}_{\delta'}(X)\hookrightarrow \Lambda^{l,\beta}_{\delta}(X)$ is compact. 	
\end{lemma}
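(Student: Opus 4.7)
The plan is to combine the classical Arzelà--Ascoli compactness on individual Möbius charts with the suppression produced by the weight gap $\delta' - \delta > 0$. Given a bounded sequence $\{u_n\}$ in $\Lambda^{l',\beta'}_{\delta'}(X)$, I would first pass to $v_n = \rho^{-\delta'}u_n$, which by hypothesis satisfies $\|v_n\|_{\Lambda^{l',\beta'}_0(X)} \leq C_0$, and then produce a candidate limit $v$ on $X$ together with the required decay estimates in all directions.

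First, on each of the countably many Möbius charts $\Phi_i$ from Lemma \ref{lem.MC2}, the pullbacks $\{\Phi_i^* v_n\}$ are uniformly bounded in $C^{l',\beta'}(\overline{B_2})$. The classical compact embedding $C^{l',\beta'}(\overline{B_2}) \hookrightarrow\hookrightarrow C^{l,\beta}(\overline{B_2})$ together with a diagonal extraction across $i$ produces a subsequence (still denoted $\{v_n\}$) such that $\Phi_i^* v_n \to \Phi_i^* v$ in $C^{l,\beta}(\overline{B_2})$ for each $i$, where $v$ is well defined on $X$ by pointwise consistency on overlaps. Setting $u = \rho^{\delta'} v$ yields the candidate limit, and the task is to establish
\[
\|u_n - u\|_{\Lambda^{l,\beta}_\delta(X)} = \sup_\Phi \bigl\|\Phi^*\bigl(\rho^{-\delta}(u_n-u)\bigr)\bigr\|_{C^{l,\beta}(\overline{B_2})} \longrightarrow 0,
\]
where the supremum is over \emph{all} Möbius charts.

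The key computation is as follows. For a chart $\Phi$ centered at a point $p_0$ with background coordinate $\rho_0 = \rho(p_0)$, the identity $\rho\circ\Phi(x,y) = \rho_0 x$ with $x$ bounded away from $0$ and $\infty$ on $\overline{B_2}$ gives
\[
\Phi^*\bigl(\rho^{-\delta}(u_n - u)\bigr) = \rho_0^{\delta'-\delta}\,x^{\delta'-\delta}\,\Phi^*(v_n - v),
\]
so the chart-wise norm is controlled by $C\rho_0^{\delta'-\delta}\|\Phi^*(v_n-v)\|_{C^{l,\beta}(\overline{B_2})}$. For a prescribed $\eta > 0$ I would then split the supremum: for charts with $\rho_0 \leq \epsilon$, the trivial bound $\|\Phi^*(v_n-v)\|_{C^{l,\beta}} \leq 2C_0$ combined with the factor $\epsilon^{\delta'-\delta}$ makes the contribution smaller than $\eta/2$ for $\epsilon$ chosen small enough; for the remaining charts, whose images lie in the compact set $K_\epsilon = \{\rho \geq c\epsilon\}\subset X$, I would cover $K_\epsilon$ by finitely many charts $\Phi_{i_1},\ldots,\Phi_{i_N}$ from the countable collection and use the uniform $C^{l,\beta}$ bounds on the transition diffeomorphisms supplied by Lemma \ref{lem.MC1} to estimate $\|\Phi^*(v_n-v)\|_{C^{l,\beta}}$ by a constant times $\sum_j \|\Phi_{i_j}^*(v_n-v)\|_{C^{l,\beta}}$, which vanishes as $n\to\infty$ by the diagonal extraction.

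The main obstacle is precisely this uniformity over the uncountable family of Möbius charts, since the diagonal argument only controls the countable subcollection of Lemma \ref{lem.MC2}. The weight gap $\delta'-\delta > 0$ handles charts near the boundary for free by absorbing them into $\epsilon^{\delta'-\delta}$, and on compact interior pieces the uniformly locally finite property together with the uniform Euclidean $C^{l,\beta}$-regularity of the transition maps converts convergence on the countable subcollection into uniform convergence. Combining the two pieces of the split yields $\|u_n - u\|_{\Lambda^{l,\beta}_\delta(X)} \to 0$, proving compactness of the inclusion.
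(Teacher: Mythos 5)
Your proof is essentially correct, and in fact the paper offers no argument at all for Lemma \ref{lem.FS1}: it is stated as a known fact (it is standard in Lee's framework \cite{Le2}), so your chart-by-chart Arzel\`a--Ascoli extraction combined with the suppression coming from the weight gap $\delta'-\delta>0$ is exactly the expected route, and the key identity $\Phi^*\bigl(\rho^{-\delta}(u_n-u)\bigr)=(\rho_0 x)^{\delta'-\delta}\,\Phi^*(v_n-v)$ with $x$ bounded away from $0$ and $\infty$ on $\overline{B_2}$ is used correctly. Two steps deserve an explicit line. First, in the near-boundary half of the split you invoke the bound $\|\Phi^*(v_n-v)\|_{C^{l,\beta}(\overline{B_2})}\le 2C_0$ for \emph{arbitrary} charts, which presupposes that the limit $v$ itself satisfies $\sup_\Phi\|\Phi^*v\|_{C^{l,\beta}(\overline{B_2})}\le C C_0$; this is true, but it requires observing that $v_n\to v$ in $C^{l}_{\mathrm{loc}}(X)$ (any compact subset of $X$ is covered by finitely many $V_1(p_i)$) and that the $C^{l,\beta}(\overline{B_2})$ norm is lower semicontinuous under pointwise convergence of the functions and their derivatives up to order $l$. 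Second, the uniform control of the transition maps between overlapping M\"obius charts is not what Lemma \ref{lem.MC1} asserts (that lemma controls the pulled-back metric, not coordinate changes); the uniformity you need is nonetheless available, since on the region $\{\rho\ge c\epsilon\}$ the transitions are compositions of the finitely many fixed smooth background transition maps with affine rescalings whose dilation factors are bounded by a constant $C(\epsilon)$, and $\epsilon$-dependent constants are harmless because $\epsilon$ is fixed before letting $n\to\infty$. Alternatively, you can avoid transition maps altogether on the interior piece by interpolation: $\|\Phi^*(v_n-v)\|_{C^{l,\beta}(\overline{B_2})}\le C\,\|\Phi^*(v_n-v)\|_{C^{0}(\overline{B_2})}^{\theta}\,\|\Phi^*(v_n-v)\|_{C^{l',\beta'}(\overline{B_2})}^{1-\theta}$ for some $\theta\in(0,1)$, where the first factor is bounded by $\sup_{\{\rho\ge c\epsilon\}}|v_n-v|\to0$ and the second is uniformly bounded. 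With these clarifications the argument is complete.
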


\subsection{Schauder Estimates}
In this paper, we mainly deal  with the following elliptic operator: 
\begin{equation}\label{eq.ellP}
	P_0=\Delta_+ + s(n-s)	
\end{equation}
for some given $s>\frac{n}{2}$. 
By Lemma 4.8 of \cite{Le2}, we have the following.

\begin{lemma}[Schauder Estimates]\label{lem.SE}
Suppose $\delta\in\mathbb{R}$, $0\leq  \beta<1$ and $2\leq l+\beta\leq k+1+\alpha$. If $u\in \Lambda_{\delta}^{0,0}(X)$ such that $P_0u\in \Lambda_{\delta}^{l-2,\beta}(X)$, then there exists a constant $C>0$ such that
$$
	\|u\|_{\Lambda^{l,\beta}_{\delta}(X)} \leq C\left(\|P_0u\|_{\Lambda^{l-2,\beta}_{\delta}(X)}+\|u\|_{\Lambda^{0,0}_{\delta}(X)}\right). 
$$
\end{lemma}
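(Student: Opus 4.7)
The statement is exactly Lemma 4.8 of Lee's paper \cite{Le2}, so the plan is to recapitulate Lee's argument, emphasizing how the M\"obius charts convert the degenerate operator $P_0$ on $(X,g_+)$ into a family of uniformly elliptic operators on a fixed bounded Euclidean domain to which classical Schauder estimates apply.

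My plan is to localize on each M\"obius chart and reduce to the Euclidean Schauder estimate on the fixed ball $B_2\subset\mathbb{H}$. First I would fix an arbitrary M\"obius chart $\Phi=\Phi_{p_0}:B_2\to V_2(p_0)\subset X$ and pull back the equation. By Lemma \ref{lem.MC1} the pulled-back metric $\Phi^*g_+$ is uniformly comparable to $g_{\mathbb H}=x^{-2}(dx^2+dy^2)$ with uniform $C^{k,\alpha}(\overline{B_2})$ bounds on the coefficients, so the operator $\widetilde P:=\Phi^*P_0$ has the form $\widetilde P=a^{ij}(z)\partial_i\partial_j+b^i(z)\partial_i+c(z)$ on $B_2$ with $a^{ij}$ uniformly elliptic (independent of $p_0$) and with $C^{k-1,\alpha}(\overline{B_2})$ coefficient bounds independent of the chart. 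In particular $\widetilde P$ is a classical (non-degenerate) elliptic operator on $\overline{B_2}$.

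Next, I would reduce to the weight-free case. In the coordinates $(x,y)$ on $B_2$, the fixed defining function $\rho$ pulls back as $\Phi^*\rho=\rho_0 x$ with $\rho_0=\rho(p_0)$ a positive constant; since $x\in[\frac14,4]$ on $B_2$, the function $\Phi^*(\rho^{-\delta})=\rho_0^{-\delta}x^{-\delta}$ is a smooth positive function on $\overline{B_2}$ whose $C^{l,\beta}(\overline{B_2})$ norm is bounded by a constant times $\rho_0^{-\delta}$. Setting $v:=\Phi^*(\rho^{-\delta}u)=\rho_0^{-\delta}x^{-\delta}\cdot \Phi^*u$, the function $v$ satisfies an equation of the form $\widetilde P_\delta v=\Phi^*(\rho^{-\delta}P_0u)+R$, where $\widetilde P_\delta$ is obtained from $\widetilde P$ by conjugation with the smooth positive weight $x^{-\delta}$ and is again uniformly elliptic on $B_2$ with uniformly bounded coefficients, and the conjugation error is absorbed on the right-hand side. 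Then I apply the classical interior Schauder estimate on $B_1\subset B_2$:
$$
\|v\|_{C^{l,\beta}(\overline{B_1})}\le C\bigl(\|\widetilde P_\delta v\|_{C^{l-2,\beta}(\overline{B_2})}+\|v\|_{C^{0}(\overline{B_2})}\bigr),
$$
valid because $l-2+\beta\le k-1+\alpha$ guarantees the coefficient regularity needed. Re-expressing the right-hand side in terms of $P_0u$ and $u$ with the weight $\rho^{-\delta}$, this gives
$$
\|\Phi^*(\rho^{-\delta}u)\|_{C^{l,\beta}(\overline{B_1})}\le C\bigl(\|\Phi^*(\rho^{-\delta}P_0u)\|_{C^{l-2,\beta}(\overline{B_2})}+\|\Phi^*(\rho^{-\delta}u)\|_{C^{0}(\overline{B_2})}\bigr),
$$
with $C$ independent of the chart.

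Finally, I would upgrade the $B_1$-bound to a $B_2$-bound by covering $B_2$ with finitely many translates of $B_1$ realized as M\"obius charts centered at nearby points (using Lemma \ref{lem.MC2} and the fact that any point in $V_2(p_0)$ sits in some $V_1(p_j)$ with $V_2(p_j)$ having uniformly bounded overlap with $V_2(p_0)$); this is where one uses the hyperbolic-isometric nature of the M\"obius parameterization. Taking the supremum over all M\"obius charts $\Phi$ of the resulting inequality, and invoking the definition of $\Lambda^{l,\beta}_\delta(X)$, yields the claimed estimate. The main obstacle — the degeneracy of $P_0$ at $M$ — is entirely dissolved by the M\"obius-chart construction: in each chart the geometry looks uniformly like a fixed piece of hyperbolic space, so once one has the uniform bounds of Lemma \ref{lem.MC1}, everything reduces to one fixed classical Schauder estimate on $B_2$ applied with uniform constants.
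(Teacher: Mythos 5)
Your proposal follows essentially the same route as the paper's proof: pull back $P_0$ by M\"obius charts, use the uniform $C^{k-1,\alpha}(\overline{B_2})$ coefficient bounds from Lemma \ref{lem.MC1}, apply the classical interior Schauder estimate on $B_1\subset B_2$, and take the supremum over charts. The extra details you supply (absorbing the weight via $\Phi^*\rho=\rho_0 x$ with $x$ bounded above and below on $\overline{B_2}$, and the equivalence of norms defined over $B_1$ versus $B_2$) are exactly the points the paper leaves implicit, and they are handled correctly, apart from the harmless numerical slip that on $\overline{B_2}$ one has $x\in[e^{-2},e^{2}]$ rather than $[\tfrac14,4]$.
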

\begin{proof}
Let $\{\Phi_i\}$ be the uniform locally finite covering of $X$ by M\"{o}bius charts as in Lemma \ref{lem.MC2}. Then the estimate we want reduces to the interior Schauder estimates for $L_i=\Phi_i^*\circ P\circ (\Phi_i^{-1})^*$ in $B_2$. By assumption, $L_i$ has uniform $C^{k-1,\alpha}(\overline{B_2})$ coefficients. Hence
$$
\|u\|_{C^{l,\beta}(\overline{B_1})} \leq C(\|L_iu\|_{C^{l-2,\beta}(\overline{B_2})}+\|u\|_{C^{0,0}(\overline{B_2})}).
$$
Here $C$ only depends on the $C^{l-2,\beta}(\overline{B_2})$ norm of the coefficients of $L_i$. 
\end{proof}

\begin{remark}\label{rem.2}
If $\{g_+^{(i)}\}$ is a set of $C^{k,\alpha}$ conformally compact Poincar\'{e}-Einstein metric such that $\{\bar{g}^{(i)}=\rho^2g_+^{(i)}\}$ is compact in the $C^{k,\alpha}(\overline{X})$  topology, then Lemma \ref{lem.SE} holds for all $P_0^{(i)}$ with a  constant $C>0$ independent of $i$.
\end{remark}

\subsection{Fredholm and Invertibility}
First by \cite[Theorem C]{Le2} (which was also proved in the smooth setting in \cite{MM1, Ma1}), $P_0$ defined in (\ref{eq.ellP})  has the following properties. 

\begin{lemma}\label{lem.FH1}
For $0\leq \beta<1$, $2\leq l+\beta\leq k+\alpha$ and $n-s<\delta<s$, the map
\begin{equation}\label{eq.FH}
P_0: \Lambda_{\delta}^{l,\beta}(X) \longrightarrow \Lambda_{\delta}^{l-2,\beta}(X)
\end{equation}
 is Fredholm with kernel equal to the $L^2$-kernel of $P_0$. 
 If further $\mathrm{Spec}(-\Delta_+)>s(n-s)$, then (\ref{eq.FH}) is invertible. 
\end{lemma}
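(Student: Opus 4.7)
\textbf{Proof Plan for Lemma \ref{lem.FH1}.}

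The plan is to follow the standard Mazzeo--Melrose-type construction adapted to the finite-regularity setting of Lee \cite{Le2}. First I would compute the indicial operator of $P_0$ at the boundary. Since $g_+=\rho^{-2}\bar g$ and $|d\rho|_{\bar g}^2=1$ on $M$, near $M$ one has $\Delta_+=(\rho\partial_\rho)^2-n(\rho\partial_\rho)+\rho^2\bar\Delta+\rho(\cdots)$, so the indicial operator of $P_0=\Delta_++s(n-s)$ is
\begin{equation*}
I(P_0)=(\rho\partial_\rho)^2-n(\rho\partial_\rho)+s(n-s).
\end{equation*}
The indicial roots are the solutions of $\mu^2-n\mu+s(n-s)=0$, namely $\mu=s$ and $\mu=n-s$. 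The hypothesis $n-s<\delta<s$ says precisely that $\delta$ lies in the open indicial gap. This is the cornerstone of the argument: no indicial root obstructs the weight $\delta$.

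Next I would establish the Fredholm property by constructing a parametrix. Following the 0-calculus approach, I would glue two pieces: an interior parametrix, obtained by inverting $P_0$ on compactly supported pieces using classical elliptic theory and the local Schauder estimates of Lemma \ref{lem.SE}; and a boundary parametrix, obtained by explicitly inverting $I(P_0)$ on half-space model by Mellin transform in the $\rho$-variable (which is invertible on the line $\operatorname{Re}\mu=\delta$ because $\delta\neq s,n-s$). Matching these with a partition of unity produces a bounded right parametrix $Q:\Lambda^{l-2,\beta}_\delta(X)\to\Lambda^{l,\beta}_\delta(X)$ with $P_0Q=I-K_1$ and a left parametrix with $QP_0=I-K_2$, where the error operators $K_i$ improve weight or regularity. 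Combining with the compact inclusion $\Lambda^{l',\beta'}_{\delta'}\hookrightarrow\Lambda^{l,\beta}_{\delta}$ of Lemma \ref{lem.FS1}, the $K_i$ are compact, so $P_0$ is Fredholm on the weighted spaces.

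For the kernel statement, suppose $u\in\Lambda^{l,\beta}_\delta(X)$ with $P_0u=0$. An iterative indicial-root argument applied to the formal expansion of $u$ near $M$ shows that the leading coefficient of a putative $\rho^{n-s}$ term must vanish (since the source vanishes and we know $u=O(\rho^{\delta})$ with $\delta>n-s$); bootstrapping, one obtains $u=O(\rho^{s-\varepsilon})$ for any $\varepsilon>0$. Because $s>n/2$ and $dV_{g_+}\sim\rho^{-n-1}\,d\rho\,dV_{\hat g}$ near $M$, this decay forces $u\in L^2(X,g_+)$. Conversely, an $L^2$ solution is smooth in the interior by classical regularity and, by boundary regularity for uniformly degenerate operators, has an expansion beginning at $\rho^s$, hence lies in $\Lambda^{l,\beta}_\delta(X)$. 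Thus the two kernels coincide.

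Finally, assume $\operatorname{Spec}(-\Delta_+)>s(n-s)$. Then the $L^2$-kernel of $P_0=\Delta_++s(n-s)$ is trivial, so by the preceding paragraph $\ker P_0=0$ on $\Lambda^{l,\beta}_\delta$. For surjectivity, the cokernel is identified, via the natural $L^2(X,g_+)$-pairing, with the kernel of $P_0$ on the dual weighted space corresponding to weight $n-\delta$; since the gap $(n-s,s)$ is symmetric about $n/2$ and $\delta\in(n-s,s)$ implies $n-\delta\in(n-s,s)$ as well, the same analysis applies and yields triviality of the cokernel under the spectral gap assumption. The main obstacle in this plan is the boundary parametrix construction and the verification that the resulting error operators are genuinely compact between the weighted H\"older spaces with only finite ($C^{k-1,\alpha}$) regularity of coefficients; this is where I would rely most heavily on Lee's framework in \cite{Le2} rather than the smooth Mazzeo--Melrose calculus.
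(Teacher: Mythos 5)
The paper does not actually prove this lemma: it is imported wholesale from Lee's Theorem C in \cite{Le2} (with \cite{MM1, Ma1} cited for the smooth setting), so there is no in-paper argument to compare against step by step. What you have written is a reconstruction of the standard proof of that cited theorem, and in outline it is correct: the indicial roots of $\Delta_++s(n-s)$ are indeed $s$ and $n-s$, the weight condition $n-s<\delta<s$ is exactly the indicial gap, the kernel identification via the decay bootstrap (no $\rho^{n-s}$ term allowed, hence decay at rate $\rho^{s-\varepsilon}$, hence $L^2$ since $s>n/2$) is the right mechanism, and the symmetry $\delta\in(n-s,s)\Leftrightarrow n-\delta\in(n-s,s)$ is what makes invertibility work under the spectral hypothesis. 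Your route, however, is closer to the Mazzeo--Melrose $0$-calculus (Mellin-transform boundary parametrix glued to an interior parametrix) than to Lee's actual finite-regularity argument, which deliberately avoids a full parametrix: Lee proves uniform estimates by pulling back to M\"obius charts and comparing with the exact hyperbolic model, inverts the model operator using the explicit resolvent on $\mathbb{H}^{n+1}$, and obtains Fredholmness and surjectivity through weighted Sobolev theory combined with Schauder-type regularity, rather than through a duality pairing taken directly on weighted H\"older spaces. Two caveats on your sketch: the identification of the cokernel ``via the natural $L^2$-pairing'' is not literal for H\"older spaces (they are non-reflexive; the standard fix is to solve first in weighted $L^2$/Sobolev spaces, where duality is honest, and then upgrade to $\Lambda^{l,\beta}_\delta$ by the Schauder estimates of Lemma \ref{lem.SE}); and with coefficients only in $C^{k-1,\alpha}$ the construction and compactness of the error terms in a Mellin-type parametrix require real care --- which is precisely why the paper, and you in your final sentence, defer to Lee's framework. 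In practice the economical move is the paper's: cite Theorem C of \cite{Le2} rather than re-derive it.
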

\begin{remark}\label{rmk.3}
If $\{g_+^{(i)}\}$ is a set of $C^{k,\alpha}$ conformally compact Poincar\'{e}-Einstein metric such that $\{\bar{g}^{(i)}=\rho^2g_+^{(i)}\}$ is compact in the $C^{k,\alpha}(\overline{X})$   topology and $\mathcal{Y}(M,[\hat{g}^{(i)}])\geq 0$ for all $i$, 
then for $0\leq \beta<1$, $2< l+\beta\leq k+\alpha$ and $n-s<\delta<s$, the map
\begin{equation*}
P^{(i)}_0: \Lambda_{\delta}^{l,\beta}(X) \longrightarrow \Lambda_{\delta}^{l-2,\beta}(X)
\end{equation*}
 is invertible for each $i$ and there is a constant $C>0$  independent of $i$ such that
 $$
 \|u\|_{\Lambda_{\delta}^{l,\beta}(X)} \leq C\|P_0^{(i)}u\|_{\Lambda_{\delta}^{l-2,\beta}(X)}.  
 $$
\end{remark}
\begin{proof}
By Lemma \ref{lem.SE}, \ref{lem.FH1} and Remark \ref{rem.2}, we only need to show that
if $\|P_0^{(i)}u\|_{\Lambda_{\delta}^{l-2,\beta}(X)}=1$ then there is a uniform constant $C$ such that $\|u\|_{\Lambda_{\delta}^{0,0}(X)}\leq C$. Otherwise, there exists a sequence 
$\{u_k\}_{k\geq 1}$ such that 
$$
\|P_0^{(k)}u_k\|_{\Lambda_{\delta}^{l-2,\beta}(X)}=1, \quad \|u_k\|_{\Lambda_{\delta}^{0,0}(X)}\geq k. 
$$
Take $v_k=\frac{u_k}{ \|u_k\|_{\Lambda_{\delta}^{0,0}(X)}}$. Then
$$
\begin{aligned}
&\|P_0^{(k)}v_k\|_{\Lambda_{\delta}^{l-2,\beta}(X)}\leq \frac{1}{k}, \quad \|v_k\|_{\Lambda_{\delta}^{0,0}(X)}=1, \quad
\Longrightarrow\quad
\|v_k\|_{\Lambda_{\delta}^{l,\beta}(X)}\leq  C'
\end{aligned}
$$
for some $C'>0$ independent of $i$. Take $2\leq l'+\beta'<l+\beta$ and $n-s<\delta'<\delta<s$. Then by passing to a subsequence, we can assume 
$$
v_k \longrightarrow v_{\infty} \quad \textrm{in $\Lambda_{\delta'}^{l',\beta'}(X)$}
\quad\textrm{and}\quad
\bar{g}^{k} \longrightarrow \bar{g}^{(\infty)}  \quad \textrm{in $C^{k,\alpha}(\overline{X})$}.
$$
Hence $P_0^{(k)}v_k$ converges to $P_{0}^{(\infty)}v_{\infty}$ in $\Lambda_{\delta'}^{l'-2,\beta'}(X)$ and by assumption $P_{0}^{(\infty)}v_{\infty}=0$. 
However, $P_{0}^{(\infty)}: \Lambda_{\delta'}^{l',\beta'}(X) \longrightarrow \Lambda_{\delta'}^{l'-2,\beta'}(X)$ is invertible. Hence $v_{\infty}=0$, which contradicts with $\|v_k\|_{\Lambda_{\delta}^{0,0}(X)}=1$ for all $k$. 
 We finish the proof.
\end{proof}

For $\lambda \in \mathbb{R}$, we can also define
\begin{equation}\label{eq.ellP2}
P_{\lambda}=\rho^{-\lambda}(\Delta_++s(n-s))\rho^{\lambda}. 
\end{equation}
Then a shifting of the weight shows the following.
\begin{lemma}\label{lem.FH2}
For $0\leq \beta<1$,  $2\leq l+\beta\leq k+1+\alpha$ and $n-s-\lambda<\delta<s-\lambda$, the map
\begin{equation}\label{eq.FH2}
P_{\lambda}: \Lambda_{\delta}^{l,\beta}(X) \longrightarrow  \Lambda_{\delta}^{l-2,\beta}(X)
\end{equation}
 is Fredholm with kernel consisting of $\rho^{-\lambda}f$ where $f$ is in the $L^2$-kernel of $\Delta_++s(n-s)$. 
 If further $\mathrm{Spec}(-\Delta_+)>s(n-s)$, then (\ref{eq.FH2}) is invertible. 
\end{lemma}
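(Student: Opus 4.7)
The plan is to deduce this directly from Lemma \ref{lem.FH1} by observing that multiplication by $\rho^\lambda$ conjugates $P_\lambda$ into $P_0$ and sets up an isometric isomorphism of the weighted H\"older spaces that shifts the weight index by exactly $\lambda$.

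First I would verify the isomorphism claim. From the definition $\|u\|_{\Lambda^{l,\beta}_{\delta}(X)} = \|\rho^{-\delta}u\|_{\Lambda^{l,\beta}_0(X)}$ and the observation that in any M\"obius chart $\Phi_{p_0}(x,y)=(\theta_0+\rho_0 x,\rho_0 y)$ the pullback of $\rho$ is $\rho_0 x$, a straightforward manipulation gives
$$
\|\rho^\lambda u\|_{\Lambda^{l,\beta}_{\delta+\lambda}(X)} \;=\; \|\rho^{-\delta-\lambda}(\rho^\lambda u)\|_{\Lambda^{l,\beta}_0(X)} \;=\; \|\rho^{-\delta} u\|_{\Lambda^{l,\beta}_0(X)} \;=\; \|u\|_{\Lambda^{l,\beta}_\delta(X)}.
$$
Hence the map $M_{\rho^\lambda}:\Lambda^{l,\beta}_{\delta}(X)\to \Lambda^{l,\beta}_{\delta+\lambda}(X)$ is a bijective isometry with inverse $M_{\rho^{-\lambda}}$, and the same holds on $\Lambda^{l-2,\beta}$-spaces.

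Next, by the definition $P_\lambda = \rho^{-\lambda}(\Delta_++s(n-s))\rho^\lambda$ we have the factorization
$$
P_\lambda \;=\; M_{\rho^{-\lambda}} \circ P_0 \circ M_{\rho^\lambda}
$$
as operators. The hypothesis $n-s-\lambda < \delta < s-\lambda$ is equivalent to $n-s < \delta+\lambda < s$, which is exactly the admissible range for Lemma \ref{lem.FH1} at weight $\delta+\lambda$. Thus $P_0: \Lambda^{l,\beta}_{\delta+\lambda}(X)\to \Lambda^{l-2,\beta}_{\delta+\lambda}(X)$ is Fredholm, and conjugation by the isomorphisms $M_{\rho^{\pm\lambda}}$ preserves the Fredholm property and the index. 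To identify the kernel, observe that $u\in\ker P_\lambda$ if and only if $\rho^\lambda u \in \ker P_0$; by Lemma \ref{lem.FH1} the latter coincides with the $L^2$-kernel of $\Delta_++s(n-s)$, so
$$
\ker P_\lambda \;=\; \rho^{-\lambda}\cdot\{f\in L^2 : (\Delta_++s(n-s))f=0\}.
$$

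Finally, when $\mathrm{Spec}(-\Delta_+)>s(n-s)$ the $L^2$-kernel of $\Delta_++s(n-s)$ is trivial, so $\ker P_\lambda=0$; since the Fredholm index is zero (inherited from $P_0$ via the conjugation) and the kernel is zero, $P_\lambda$ is invertible. There is no substantial obstacle in this argument: the only nontrivial point is the isometry of $M_{\rho^\lambda}$ between weighted H\"older spaces, which follows tautologically from Definition \ref{definition} and the M\"obius-chart form of $\rho$. Everything else is formal manipulation once the shift of the weight interval is recognized.
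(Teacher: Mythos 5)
Your proposal is correct and is exactly the paper's intended argument: the paper proves Lemma \ref{lem.FH2} by the one-line remark that ``a shifting of the weight'' reduces it to Lemma \ref{lem.FH1}, which is precisely your conjugation $P_\lambda=M_{\rho^{-\lambda}}\circ P_0\circ M_{\rho^{\lambda}}$ together with the tautological isometry $\|\rho^{\lambda}u\|_{\Lambda^{l,\beta}_{\delta+\lambda}(X)}=\|u\|_{\Lambda^{l,\beta}_{\delta}(X)}$. The only simplification worth noting is that under the spectral hypothesis you can conclude invertibility directly as a composition of three invertible maps, without invoking the index-zero argument.
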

\begin{remark}\label{rmk.4}
If $\{g_+^{(i)}\}$ is a set of $C^{k,\alpha}$ conformally compact Poincar\'{e}-Einstein metric such that $\{\bar{g}^{(i)}=\rho^2g_+^{(i)}\}$ is compact in the $C^{k,\alpha}(\overline{X})$  topology and $\mathcal{Y}(M,[\hat{g}^{(i)}])\geq 0$ for all $i$, 
then for $0\leq \beta<1$, $2< l+\beta\leq k+\alpha$ and $n-s-\lambda<\delta<s-\lambda$, the map
\begin{equation*}
P_{\lambda}^{(i)}: \Lambda_{\delta}^{l,\beta}(X) \longrightarrow \Lambda_{\delta}^{l-2,\beta}(X)
\end{equation*}
 is invertible  for each $i$ and there is a constant $C>0$ independent of $i$ such that
 $$
 \|u\|_{\Lambda_{\delta}^{l,\beta}(X)} \leq C\|P_{\lambda}^{(i)}u\|_{\Lambda_{\delta}^{l-2,\beta}(X)}.  
 $$
\end{remark}

\subsection{Geodesic Normal Defining Function} 
While working with the boundary asymptotics, we usually choose the geodesic normal defining function. However, in our setting, it is no longer smooth.  A general regularity version of \cite[Lemma 5.1]{Le1} gives the following.

\begin{lemma}[Lee]
\label{lem.GDef}
There exists a boundary defining function $x\in C^{k-1,\alpha}(\overline{X})$	 such that
\begin{itemize}
\item[(1)] $x=\rho+O(\rho^2)$;
\item[(2)] $\tilde{g}=x^2g_+$ has a $C^{k-1,\alpha}(\overline{X})$ extension to $\overline{M}$ with $\tilde{g}|_{TM}=\bar{g}|_{TM}=\hat{g}$;
\item[(3)]	$\|dx\|^2_{\tilde{g}}\equiv 1$ in a collar neighbourhood of $M$.
\end{itemize}
\end{lemma}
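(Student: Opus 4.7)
\emph{Approach.} I would look for $x$ in the form $x = e^{w}\rho$ with $w \in C^{k-1,\alpha}(\overline{X})$ satisfying $w|_M = 0$. Under this ansatz property (1) is automatic, and $\tilde g = x^2 g_+ = e^{2w}\bar g$ lies in $C^{k-1,\alpha}(\overline{X})$ with $\tilde g|_{TM} = \bar g|_{TM} = \hat g$, delivering property (2) for free; the substance of the proof is to reduce property (3) to a first-order PDE for $w$ and to solve that PDE.

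\emph{Reduction to a first-order PDE.} Using $g_+ = \rho^{-2}\bar g$, the identity $|d\rho|_{\bar g}^2 = 1 - \rho \bar T$ from \eqref{eq.T}, and $\log x = w + \log\rho$, a direct calculation shows that $|dx|_{\tilde g}^2 = 1$ is equivalent to the quasi-linear eikonal-type equation
\[
2\langle d\rho, dw\rangle_{\bar g} + \rho\,|dw|_{\bar g}^2 = \bar T, \qquad w|_M = 0.
\]
Since $\bar g \in C^{k,\alpha}(\overline{X})$ and $\bar T \in C^{k-1,\alpha}(\overline{X})$ as recorded in Section \ref{sec.2}, and since $|d\rho|^2_{\bar g}|_{M} = 1 \neq 0$, this equation is non-characteristic along $M$. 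Its linearization at $w = 0$ is the transport operator $L v := 2\langle d\rho, dv\rangle_{\bar g}$, whose characteristic vector field $\bar\nabla\rho$ is transverse to $M$ and points into $X$.

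\emph{Solution in a collar and globalization.} I would solve the PDE in a one-sided collar $U = \{0 \le \rho < \epsilon\}$ by the Hamilton--Jacobi method of characteristics. The characteristic system associated with $H(z,p) = 2\langle d\rho, p\rangle_{\bar g} + \rho |p|_{\bar g}^2$ has $C^{k-1,\alpha}$ coefficients, and its Cauchy data on $M$ are $w = 0$ together with $p = dw$ uniquely determined from the PDE by the tangential condition $dw|_{TM} = 0$. Integrating the characteristic ODEs then yields $w \in C^{k-1,\alpha}(U)$. Finally I would choose a smooth cutoff $\chi$ equal to $1$ on $\{\rho \le \epsilon/2\}$ and supported in $U$, and set $x := e^{\chi w}\rho$; this is a global defining function in $C^{k-1,\alpha}(\overline{X})$ satisfying (1), (2), and (3) on the inner collar $\{\rho \le \epsilon/2\}$, which is all the lemma requires.

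\emph{Main obstacle.} The hard part is the finite-regularity analysis of the quasi-linear equation. Since $L$ is only first-order, no Schauder-type derivative gain is available when inverting it, so a naive Picard iteration $w_{n+1} := L^{-1}\bigl(\bar T - \rho |dw_n|_{\bar g}^2\bigr)$ loses one derivative per step through the quadratic nonlinear term. I would get around this either by appealing directly to Hamilton--Jacobi theory, which produces a solution at the same regularity as the coefficient data, or by first obtaining a low-regularity fixed point via contraction in a $C^{1}$-type norm on a sufficiently thin collar---where the prefactor $\rho$ dominates the nonlinearity---and then bootstrapping through the equation $L(w) = \bar T - \rho|dw|_{\bar g}^2$ to recover the full $C^{k-1,\alpha}$ regularity. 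The key technical input is the regularity $\bar T \in C^{k-1,\alpha}$, which comes from the cancellation of the leading boundary value of $|d\rho|^2_{\bar g}$ and pins down the sharp index $k-1$ for $x$ and for $\tilde g$.
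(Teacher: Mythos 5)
Your proposal is correct and coincides with the proof the paper relies on: the paper does not prove this lemma itself but cites \cite[Lemma 5.1]{Le1}, whose argument is precisely your ansatz $x=e^{w}\rho$, reducing (3) to the noncharacteristic first-order equation $2\langle d\rho, dw\rangle_{\bar{g}}+\rho|dw|^2_{\bar{g}}=\bar{T}$ with $w|_M=0$, solved on a collar by characteristics and then cut off. The regularity bookkeeping also comes out as you claim, with the one point worth making explicit being that the gradient $p=dw$ is itself carried by the characteristic flow of the $C^{k-2,\alpha}$ Hamiltonian vector field, so $dw\in C^{k-2,\alpha}$ and hence $w,x\in C^{k-1,\alpha}$, the one-derivative loss dictated by $\bar{T}\in C^{k-1,\alpha}(\overline{X})$ (your alternative contraction-plus-bootstrap route is the shakier of the two, since a transport equation gains no derivatives and the bootstrap would need commuting derivatives through the equation rather than iterating it directly).
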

\begin{lemma}\label{lem.GDef2}
Let $x$ and $\tilde{g}$ be given in Lemma \ref{lem.GDef}. Then
$$
\tilde{g}=dx^2 +\hat{g}+x^2g_2+O(x^{2+\epsilon}),
$$	
where $g_2=-\hat{A}$ and $\hat{A}_{ij}=\frac{1}{n-2}\left(\hat{R}_{ij}-\hat{J}\hat{g}_{ij}\right)$ is the Schouten tensor of $\hat{g}$. 
\end{lemma}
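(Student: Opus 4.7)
The plan is to work in the boundary-normal chart provided by Lemma \ref{lem.GDef}, writing $\tilde{g} = dx^2 + h(x,y)$ where $h(x,\cdot)$ is a one-parameter family of metrics on $M$ with $h(0,\cdot) = \hat{g}$. Since $\tilde{g} \in C^{k-1,\alpha}(\overline{X})$ with $k-1 \geq 2$, Taylor's theorem in $x$ gives
$$
h(x,y) = \hat{g}(y) + x\, h^{(1)}(y) + x^2\, h^{(2)}(y) + O(x^{2+\epsilon})
$$
for some $\epsilon \in (0,1)$ (one may take $\epsilon = \alpha$ when $k=3$), using H\"older continuity of $\partial_x^2 h$. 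The task reduces to identifying $h^{(1)}$ and $h^{(2)}$ from the Einstein condition; the lemma then asserts $h^{(1)} \equiv 0$ and $h^{(2)} = g_2 = -\hat{A}$.

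The next step is to compute $\tilde{\nabla}^2 x$ and $\tilde{\Delta} x$ in this chart. A direct Christoffel-symbol calculation yields $\tilde{\nabla}_x \tilde{\nabla}_x x = \tilde{\nabla}_x \tilde{\nabla}_a x = 0$, $\tilde{\nabla}_a \tilde{\nabla}_b x = \tfrac{1}{2}\partial_x h_{ab}$, and $\tilde{\Delta} x = \tfrac{1}{2}\mathrm{tr}_h(\partial_x h)$. Because $|dx|^2_{\tilde{g}} \equiv 1$ in a collar (Lemma \ref{lem.GDef}(3)), $\tilde{T} \equiv 0$ there, so the identities (\ref{eq.R}) and (\ref{eq.E}) specialise to
$$
\tilde{J} = -x^{-1} \tilde{\Delta} x,
\qquad
\tilde{E}_{ab} = -\frac{n-1}{2x}\,\partial_x h_{ab} - \frac{n-1}{n+1}\,\tilde{J}\, h_{ab}.
$$
The regularity observations at the end of Section \ref{sec.2} give $\tilde{J}, \tilde{E} \in C^{k-3,\alpha}(\overline{X})$, hence both remain bounded as $x \to 0$. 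Boundedness of $\tilde{E}_{ab}$ forces the singular term $\tfrac{1}{x}\partial_x h_{ab}$ to be bounded, which compels $h^{(1)} \equiv 0$; the expansion of $\tilde{J}$ then reduces to $\tilde{J} = -\mathrm{tr}_{\hat{g}} h^{(2)} + O(x^\epsilon)$.

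With $h^{(1)} = 0$ I would pin down $h^{(2)}$ by computing $\tilde{R}_{ab}|_{x=0}$ in two ways. The standard Ricci formula for a metric of the form $dx^2 + h(x)$, specialised to $h'(0) = 0$ and $h''(0) = 2 h^{(2)}$, gives $\tilde{R}_{ab}|_{x=0} = \hat{R}_{ab} - h^{(2)}_{ab}$. On the other hand, from $\tilde{R}_{ab} = \tilde{E}_{ab} + \tfrac{2n}{n+1}\tilde{J}\,\tilde{g}_{ab}$ combined with the expansions above, $\tilde{R}_{ab}|_{x=0} = -(n-1) h^{(2)}_{ab} + \tilde{J}(0)\hat{g}_{ab}$. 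Equating these two and tracing with $\hat{g}^{ab}$ (together with $\tilde{J}(0) = -\mathrm{tr}_{\hat{g}} h^{(2)}$) first forces $\tilde{J}(0) = \hat{J}$, after which the tensorial identity rearranges to $h^{(2)} = -\hat{A}$, as claimed. The main obstacle is finite-regularity bookkeeping: classically the Fefferman-Graham expansion proceeds to all orders in the smooth category, whereas here $\tilde{g}$ is only $C^{k-1,\alpha}$, so one may differentiate $h$ in $x$ only twice and must extract the remainder $O(x^{2+\epsilon})$ from the H\"older continuity of $\partial_x^2 h$; the remaining steps are routine calculus of conformal compactifications.
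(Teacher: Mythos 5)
Your proposal is correct and follows essentially the same route as the paper: both work in the boundary-normal collar $\tilde g = dx^2 + h_x$, use the Einstein relations for the compactified metric (your specialization of (\ref{eq.R})--(\ref{eq.E}) to $(x,\tilde g)$ is exactly the paper's identities $x\tilde R_{ij}+(n-1)\tilde x_{ij}+(\tilde\Delta x)\tilde g_{ij}=0$ and $\tilde\Delta x=-\tfrac{1}{2n}x\tilde R$) to kill the linear term via total geodesy and then pin down the quadratic term. The only difference is that where the paper delegates the identification $g_2=-\hat A$ to the calculation in [CDLS], you carry it out explicitly by computing $\tilde R_{ab}|_{x=0}$ in two ways and tracing (using $\tilde J(0)=-\mathrm{tr}_{\hat g}h^{(2)}$), which is a correct and self-contained filling-in of the cited step.
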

\begin{proof}
By a similar proof as \cite[Lemma 5.2]{Le1}, we have
$$
\begin{aligned}
	&  x\tilde{R}_{ij}+(n-1)\tilde{x}_{ij}+(\tilde{\Delta}x)\tilde{g}_{ij}=0,
	\\
	&  \tilde{\Delta}x=-\frac{1}{2n}x\tilde{R}=-x\tilde{R}_{ij}\tilde{x}^i\tilde{x}^j, 
\end{aligned}
$$
where $\tilde{\Delta}$ is the Laplace operator of $\tilde{g}$, and $\tilde{x}^i, \tilde{x}_{ij}$ denote the covariant derivatives w.r.t. $\tilde{g}$, and $\tilde{R}, \tilde{R}_{ij}$ are the scalar and Ricci curvatures of $\tilde{g}$. 
It is easy to see that $\tilde{x}_{ij}|_{M}=0$, which implies that $(M,\hat{g})$ is totally geodesic in $(\overline{X}, \tilde{g})$. Moreover,
$$
\hat{R}=\frac{n-1}{n}\tilde{R}|_M \quad \Leftrightarrow \quad \hat{J}=\tilde{J}|_M. 
$$
Hence by an identification of a collar neighborhood of $M$ with $[0,c)\times M$, we can write 
$$\tilde{g}=dx^2 +\hat{g}+x^2g_2+O(x^{2+\epsilon}).$$
Since $g_+=x^{-2}\tilde{g}$ satisfies  $Ric_{g_+}=-ng_+$,  the same calculation as in \cite{CDLS} gives that $g_2=-\hat{A}. $
\end{proof}

\vspace{0.2in}
\section{The Adapted and Fefferman-Graham compactifications}\label{sec.4}
In this section, we study some basic properties for the adapted and Fefferman-Graham compactifications for a Poincar\'{e}-Einstein manifold $(X,g_+)$, which were introduced in \cite{CC1} and \cite{FG2} respectively.  
Suppose $(X^{n+1}, g_+)$ ($n\geq 3$) is a $C^{k,\alpha}$ conformally compact Poincar\'{e}-Einstein manifold with $k\geq 3, 0<\alpha<1$. Let $\rho$ be a fixed smooth boundary defining function. Denote 
$$
\bar{g}=\rho^2g_+
\quad\textrm{and}\quad
\bar{g}|_{TM}=\hat{g}. 
$$


\subsection{Definition of $\rho_s$ and $\bar{g}_s$}

\subsubsection{Case 1: $s>\frac{n}{2}, s\neq n$}
In this case, if $\mathrm{Spec}(-\Delta_+)>s(n-s)$, let $v_s$ satisfy the following equation:
\begin{equation}\label{eq.def1}
-\Delta_+ v_s- s(n-s)v_s=0, \quad 
v_s=\rho^{n-s}\left(1+O(\rho^{\epsilon})\right)\ \textrm{for some $\epsilon>0$}.
\end{equation}
Then the \textit{adapted compactification} of $g_+$ is defined by
$$
\bar{g}_s=\rho_s^2 g_+, 
\quad\mathrm{where}\quad 
\rho_s=v_s^{\frac{1}{n-s}}.
$$
Notice that the adapted defining function $\rho_s\sim \rho$, which  may only have finite regularity. 

\begin{lemma}\label{lem.def1}
For $s>\frac{n}{2}, s\neq n$, if $\mathrm{Spec}(-\Delta_+)>s(n-s)$, then $\rho_s$ is uniquely determined by $\hat{g}$. 
\end{lemma}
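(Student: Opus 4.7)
The plan is to establish uniqueness of the adapted compactification in two stages: first, fix a defining function $\rho$ and show that the weak asymptotic normalization $v_s=\rho^{n-s}(1+O(\rho^\epsilon))$ pins down $v_s$ uniquely; second, show that any other admissible boundary defining function $\tilde{\rho}$ with $\tilde{\rho}^2 g_+|_{TM}=\hat{g}$ produces the same $v_s$, so that $\rho_s=v_s^{1/(n-s)}$ depends only on $\hat{g}$.

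For the first stage, suppose $v_s$ and $\tilde{v}_s$ both solve \eqref{eq.def1} with the same $\rho$. Setting $w=v_s-\tilde{v}_s$, we have $P_0 w=0$ with $P_0=\Delta_++s(n-s)$, and by cancellation of the leading term $w=O(\rho^{n-s+\epsilon_0})$ pointwise for some $\epsilon_0>0$. Choose $\delta=n-s+\epsilon_0/2$, shrinking $\epsilon_0$ if needed so that $\delta$ lies in the indicial gap $(n-s,s)$ (this uses $s>n/2$, since the gap has positive width $2s-n$). The pointwise bound together with the scaling properties of M\"obius charts (in which $\rho\circ\Phi_{p_0}\sim \rho_0 x$ with $(x,y)\in B_2$) translates directly into $w\in\Lambda^{0,0}_{\delta}(X)$. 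The Schauder estimates of Lemma \ref{lem.SE} then bootstrap this to $w\in\Lambda^{l,\beta}_\delta(X)$ for some admissible $(l,\beta)$.

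Now the invertibility part of Lemma \ref{lem.FH1} applies: because $\mathrm{Spec}(-\Delta_+)>s(n-s)$, the map $P_0:\Lambda^{l,\beta}_\delta(X)\to\Lambda^{l-2,\beta}_\delta(X)$ is an isomorphism for every $\delta\in(n-s,s)$. Since $P_0 w=0$, this forces $w\equiv 0$, hence $v_s=\tilde{v}_s$.

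For the second stage, let $\tilde{\rho}$ be another smooth defining function with $\tilde{\rho}^2 g_+|_{TM}=\hat{g}$. Comparing $\bar{g}=\rho^2 g_+$ and $\tilde{\rho}^2 g_+=(\tilde{\rho}/\rho)^2\bar{g}$ at $M$ forces $(\tilde{\rho}/\rho)|_M=1$, so $\tilde{\rho}=\rho(1+O(\rho))$ near $M$. If $\tilde{v}_s$ denotes the solution built from $\tilde{\rho}$, then expanding $\tilde{\rho}^{n-s}=\rho^{n-s}(1+O(\rho))$ and $O(\tilde{\rho}^\epsilon)=O(\rho^\epsilon)$ shows $\tilde{v}_s=\rho^{n-s}(1+O(\rho^{\min(1,\epsilon)}))$. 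Hence $\tilde{v}_s$ is also an admissible solution of \eqref{eq.def1} relative to the original $\rho$, so by the first stage $\tilde{v}_s=v_s$; taking $(n-s)$-th roots yields $\tilde{\rho}_s=\rho_s$. The main technical point is promoting the pointwise decay of $w$ to membership in a weighted H\"older space where $P_0$ is known to be an isomorphism; the indicial-gap constraint $n-s<\delta<s$ together with the spectrum assumption is exactly what makes this possible.
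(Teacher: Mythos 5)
Your uniqueness argument is correct, but it runs along a genuinely different line than the paper's. The paper first proves existence of $v_s$ (by solving away the error of the approximate solution $\rho^{n-s}$ via the invertibility of $\Delta_++s(n-s)$ on $\Lambda^{l,\beta}_\delta(X)$, taking care whether $n-s+1$ lies in the gap $(n-s,s)$), then proves \emph{positivity} of $v_s$ from the spectral hypothesis (a negative region $\Omega=\{v_s<0\}$ would force a Dirichlet eigenvalue $\le s(n-s)$), and only then gets uniqueness for fixed $\rho$ by applying the strong maximum principle to the quotient $v_s/\tilde v_s$, which satisfies a drift equation with no zeroth-order term and boundary value $1$. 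You instead treat uniqueness purely linearly: the difference $w=v_s-\tilde v_s$ lies in $\Lambda^{0,0}_\delta(X)$ for a weight $\delta=n-s+\epsilon_0/2$ in the indicial gap, the degenerate Schauder estimate (Lemma \ref{lem.SE}) upgrades it to $\Lambda^{l,\beta}_\delta(X)$, and the invertibility statement of Lemma \ref{lem.FH1} kills the kernel. That is a valid and arguably cleaner route for uniqueness, since it avoids needing positivity of the competitor $\tilde v_s$ and leans directly on the Fredholm machinery of Section 3; the final step (independence of the choice of $\rho$ with the same boundary representative, via $\tilde\rho=\rho(1+O(\rho))$) is the same in both proofs. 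What your write-up silently presumes, and what the paper's proof supplies, is the existence of $v_s$ and its positivity: positivity is not cosmetic here, since $\rho_s=v_s^{1/(n-s)}$ (with $n-s$ possibly negative or fractional) and the compactification $\bar g_s=\rho_s^2g_+$ only make sense when $v_s>0$, and it is exactly where the spectral assumption enters the paper's maximum-principle argument. If your proof is meant to stand alone for the lemma as stated, you should either cite the existence/positivity construction or reproduce it; as a uniqueness argument per se, it is complete.
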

\begin{proof}
First, with fixed $\rho$, (\ref{eq.def1}) has a solution. Notice that
$$
-\Delta_+\rho^{n-s}-s(n-s)\rho^{n-s} =-\rho^{n-s+1} \left[ (n-s)\bar{\Delta}\rho+s(n-s)T \right]. 
$$	
where $T$ is given in (\ref{eq.T}). Moreover, $\bar{\Delta}\rho, T\in C^{k-1,\alpha}(\overline{X}). $ Hence
$$
-\rho^{n-s+1} \left[ (n-s)\bar{\Delta}\rho+s(n-s)T \right]\in \rho^{n-s+1}C^{k-1,\alpha}(\overline{X})\subset \Lambda^{k-1,\alpha}_{n-s+1}(X). 
$$
If $s>\frac{n+1}{2}$, then $n-s+1\in (n-s,s)$. By Lemma \ref{lem.FH1}, there exists a unique $v_s'\in \Lambda^{k+1,\alpha}_{n-s+1}(X)$ such that 
$$
(-\Delta_+-s(n-s))(\rho^{n-s} +v_s')=0. 
$$ 
If $\frac{n+1}{2}\geq s>\frac{n}{2}$, then $n-s+1\geq s$. Take $\delta\in (n-s,s)$. Then by Lemma \ref{lem.FH1} again, there exists a unique $v_s''\in \Lambda^{k+1,\alpha}_{\delta}(X)$ such that
$$
(-\Delta_+-s(n-s))(\rho^{n-s} +v_s'')=0. 
$$

Second, $v_s>0$ in $X$. It is obvious that $v_s$ is positive near $M$. If $v_s$ has  negative  part in the interior, then by denoting $\Omega=\{v_s  < 0\}$, we have $\Omega\Subset X$ and
$$
\begin{cases}
-\Delta_+ v_s-s(n-s)v_s=0,	& \textrm{in $\Omega$,}
\\
v_s=0, &  \textrm{on $\partial \Omega$.}
\end{cases}
$$
This contradicts to $\mathrm{Spec}(-\Delta_+|_{\Omega})\geq \mathrm{Spec}(-\Delta_+)>s(n-s)$. So $v_s\geq 0$ and by strong maximum principle, $v_s>0$ in $X$. 

Third, the positivity also implies the uniqueness with given $\rho$. If there is a second solution $\tilde{v}_s$ satisfies (\ref{eq.def1}), then $\tilde{v}_s>0$ in $X$ and  
$$
\Delta_+\left(\frac{v_s}{\tilde{v}_s}\right) 
+ 2\left\langle d \log\tilde{v_s}, d\left(\frac{v_s}{\tilde{v}_s}\right)\right\rangle_{g_+}=0, 
\quad \left.\frac{v_s}{\tilde{v}_s}\right|_{M}=1.
$$
By the strong maximum principle, this implies that $v_s\equiv \tilde{v}_s$ in $X$. 

Finally, if there is another smooth defining function $\tilde{\rho}$ such that $\tilde{\rho}^2g_+|_{TM}=\hat{g}$ then we have $\tilde{\rho}\sim \rho$, which does not change the definition of $v_s$. 	Hence by above uniqueness, $\rho_s$ is only determined by the boundary representative $\hat{g}$. 
\end{proof}

\subsubsection{Case 2: $s=n$}
In this case, the equation(\ref{eq.def1}) is replaced by
 \begin{equation}\label{eq.def2}
 	-\Delta_+ w=n, \quad w=\log\rho+O(\rho^{\epsilon})\ \textrm{for some $\epsilon>0$}.
 \end{equation}
 Then the \textit{Fefferman-Graham compactification} of $g_+$ is defined by
 $$
 \bar{g}_F=\rho_F^2g_+, 
 \quad\mathrm{where}\quad 
 \rho_F=e^{w}.
 $$
Similarly the Fefferman-Graham defining function $\rho_F\sim \rho$, which  may only have finite regularity. 

 \begin{lemma}\label{lem.def2}
 The Fefferman-Graham defining function $\rho_F$ is uniquely determined by $\hat{g}$. 
 \end{lemma}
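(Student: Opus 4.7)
My plan is to follow the template of the proof of Lemma \ref{lem.def1}, replacing the prescribed leading behaviour $\rho^{n-s}$ by $\log\rho$ and the operator $\Delta_++s(n-s)$ by its $s=n$ specialization $\Delta_+$. Three points must be checked: existence of a solution $w$ of (\ref{eq.def2}) for the fixed smooth defining function $\rho$, uniqueness of such a solution with $\rho$ fixed, and invariance of $\rho_F=e^w$ under changes of defining function that preserve the boundary representative $\hat g$.

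For existence, I would set $w=\log\rho+u$ with $u=O(\rho^\epsilon)$ and derive the equation for $u$ by applying (\ref{eq.Delta}) to $\log\rho$. Using $d\log\rho=\rho^{-1}d\rho$ together with $\bar\Delta\log\rho=\rho^{-1}\bar\Delta\rho-\rho^{-2}|d\rho|^2_{\bar g}$, a short computation gives $-\Delta_+\log\rho = n|d\rho|^2_{\bar g}-\rho\bar\Delta\rho$, so that
\[
-\Delta_+ u \;=\; n\bigl(1-|d\rho|^2_{\bar g}\bigr)+\rho\bar\Delta\rho \;=\; \rho\bigl(n\bar T+\bar\Delta\rho\bigr),
\]
with $\bar T$ as in (\ref{eq.T}). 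The right-hand side lies in $\rho\,C^{k-1,\alpha}(\overline X)\subset\Lambda^{k-1,\alpha}_1(X)$ by Lemma \ref{lem.FS2}. At $s=n$ the indicial gap $(n-s,s)$ is $(0,n)$, which contains $\delta=1$ since $n\geq 3$; moreover the spectral hypothesis $\mathrm{Spec}(-\Delta_+)>s(n-s)$ reduces to $\mathrm{Spec}(-\Delta_+)>0$, which is ensured by the Yamabe nonnegativity discussion preceding the lemma. Lemma \ref{lem.FH1} applied to $P_0=\Delta_+$ with weight $\delta=1$ then furnishes a unique $u$ in the appropriate weighted space $\Lambda^{l,\beta}_1(X)$ solving the equation, giving in particular $u=O(\rho)$, so the ansatz is consistent and we set $\rho_F=e^{w}=\rho\,e^{u}$.

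For uniqueness with $\rho$ fixed, suppose $\tilde w$ is another solution of (\ref{eq.def2}). The difference $v=w-\tilde w$ satisfies $\Delta_+v=0$ with $v=O(\rho^{\epsilon'})$ for some $\epsilon'>0$, so $v\in\Lambda^{0,0}_\delta(X)$ for some $\delta\in(0,n)$. Lifting the regularity via Lemma \ref{lem.SE} and invoking the invertibility part of Lemma \ref{lem.FH1} forces $v\equiv 0$.

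For independence of the defining function, if $\tilde\rho$ is another smooth defining function with $\tilde\rho^2 g_+|_{TM}=\hat g$, then $\tilde\rho/\rho\in C^\infty(\overline X)$ is positive and equals $1$ on $M$, hence $\log(\tilde\rho/\rho)\in O(\rho)$. Any solution $\tilde w=\log\tilde\rho+O(\tilde\rho^{\tilde\epsilon})$ of (\ref{eq.def2}) can therefore be rewritten as $\log\rho+O(\rho^{\min(1,\tilde\epsilon)})$, and by the previous step it agrees with the solution $w$ constructed from $\rho$; thus $\rho_F=e^w$ depends only on $\hat g$. The only mild technical point, as in Lemma \ref{lem.def1}, is locating $\delta=1$ inside the indicial gap $(0,n)$ and checking that the computed right-hand side lies in $\Lambda^{k-1,\alpha}_1(X)$ so that Lemma \ref{lem.FH1} applies; beyond that the argument is a direct adaptation of the $s\neq n$ case.
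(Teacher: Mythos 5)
Your proposal is correct and follows essentially the same route as the paper: write $w=\log\rho+u$, compute $-\Delta_+\log\rho$ so that the source term $\rho(\bar{\Delta}\rho+n\bar{T})$ lies in $\rho\,C^{k-1,\alpha}(\overline{X})\subset\Lambda^{k-1,\alpha}_{1}(X)$, and invoke Lemma \ref{lem.FH1} at weight $\delta=1$ in the indicial range $(0,n)$ with $\mathrm{Spec}(-\Delta_+)>0$, then transfer between defining functions exactly as in Lemma \ref{lem.def1}. The only (harmless) deviation is the uniqueness step, where you use the invertibility of $\Delta_+$ on weighted spaces applied to the difference of two solutions, whereas the paper appeals to the maximum-principle argument of Lemma \ref{lem.def1}; both arguments are valid here.
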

\begin{proof}
First,  (\ref{eq.def2}) has a solution. Notice that
$$
-\Delta _+(\log \rho)=n-\rho[\bar{\Delta}\rho+nT], 
$$
where 
$$
\rho[\bar{\Delta}\rho+nT]\in \rho C^{k-1,\alpha}(\overline{X})\subset \Lambda_{1}^{k-1,\alpha}(X).
$$
By Lemma \ref{lem.FH1}, there exists a unique $w'\in \Lambda_{1}^{k+1,\alpha}(X)$, such that
$$
-\Delta _+(\log \rho+w')=n.  
$$
Since $\mathrm{Spec}(-\Delta_+)>0$, then $w>0$ and is uniquely determined by  $\hat{g}$ by a similar proof of Lemma \ref{lem.def1}. 
\end{proof}

\subsubsection{Continuous Family}
In lots of discussions, we can view the Fefferman-Graham compactification as a special case of the adapted compactification in the following sense.

\begin{lemma}
If the Yamabe constant $\mathcal{Y}(M,[\hat{g}])\geq 0$, then the adapted defining function $\rho_s$ is a continuous family of boundary defining functions for $s>\frac{n}{2}$ such that
$$
\lim_{s\rightarrow n} \rho_s =\rho_F. 
$$\end{lemma}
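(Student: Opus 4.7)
The plan is to work with the logarithms $w_s := \log\rho_s = \tfrac{1}{n-s}\log v_s$ and $w_F := \log\rho_F$, which converts the linear eigenvalue equation for $v_s$ into a quasilinear equation for $w_s$ whose $s\to n$ limit is exactly the Fefferman--Graham equation for $w_F$. Substituting $v_s = e^{(n-s)w_s}$ into $-\Delta_+ v_s = s(n-s)v_s$ and dividing by $(n-s)v_s$ produces
\begin{equation*}
-\Delta_+ w_s - (n-s)\,|dw_s|^2_{g_+} = s,
\end{equation*}
which at $s = n$ reads $-\Delta_+ w_F = n$. Since $\bar g_s = \rho_s^2 g_+$ implies $|dw_s|^2_{g_+} = |d\log\rho_s|^2_{g_+} = |d\rho_s|^2_{\bar g_s}$, this nonlinear coefficient extends continuously to $\overline X$ with boundary value $1$.

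Setting $u_s := w_s - w_F$, subtraction yields
\begin{equation*}
-\Delta_+ u_s = (n-s)\bigl(|d(w_F + u_s)|^2_{g_+} - 1\bigr),
\end{equation*}
in which the explicit factor $(n-s)$ supplies the smallness I want. The frozen source $|dw_F|^2_{g_+} - 1 = |d\rho_F|^2_{\bar g_F} - 1$ vanishes on $M$ and hence lies in $\Lambda^{k-1,\alpha}_1(X)$ by Lemma \ref{lem.FS2}, while the correction $2\langle dw_F, du_s\rangle_{g_+} + |du_s|^2_{g_+}$ depends linearly-plus-quadratically on $u_s$ in the same space.

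Lee's theorem together with $\mathcal Y(M,[\hat g])\ge 0$ yields $\mathrm{Spec}(-\Delta_+)\ge n^2/4 > 0$, so Lemma \ref{lem.def1} applied with the zeroth-order term set to zero shows $-\Delta_+\colon \Lambda^{k+1,\alpha}_1(X) \to \Lambda^{k-1,\alpha}_1(X)$ is an isomorphism (the weight $\delta = 1$ lies in the indicial gap $(0,n)$). A Banach fixed-point argument on a small ball in $\Lambda^{k+1,\alpha}_1(X)$ applied to
\begin{equation*}
u \;\longmapsto\; (-\Delta_+)^{-1}\bigl[(n-s)\,(|d(w_F + u)|^2_{g_+} - 1)\bigr]
\end{equation*}
then produces a unique small solution $u_s$ with $\|u_s\|_{\Lambda^{k+1,\alpha}_1(X)} = O(n-s)$; the uniqueness parts of Lemmas \ref{lem.def1} and \ref{lem.def2} identify it with $w_s - w_F$. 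Consequently $u_s\to 0$ uniformly on $\overline X$ as $s\to n$, so $\rho_s = \rho_F\,e^{u_s}\to \rho_F$. For $s\ne n$, continuous dependence of $v_s$ on $s$ obtained by applying the implicit function theorem to the Fredholm system of Lemma \ref{lem.def1} gives continuity of $s\mapsto \rho_s$.

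The main obstacle is uniformity in $s$ of the weighted Schauder and invertibility estimates of Section \ref{sec.3}. The indicial gap $(n-s, s)$ shifts with $s$, so one must verify that a single weight (here $\delta = 1$) lies in the gap throughout a neighbourhood of $n$ (which holds on $(n-1, n+1)$) and that the inverse of $-\Delta_+ - s(n-s)\mathrm{Id}$ remains bounded uniformly in $s$ there. Both follow from a compactness-and-contradiction argument in the spirit of Remark \ref{rmk.3}, using continuous dependence of the operator $\Delta_+ + s(n-s)\mathrm{Id}$ on $s$ as a perturbation of its $s=n$ limit $\Delta_+$.
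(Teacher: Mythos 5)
Your argument is correct, but it follows a genuinely different route from the paper. The paper's proof is short and formal: since $v_n\equiv 1$, it asserts that $s\mapsto v_s$ is continuous and differentiable at $s=n$ with $\frac{dv_s}{ds}|_{s=n}=-w$ solving (\ref{eq.def2}), and then computes $\lim_{s\to n}v_s^{1/(n-s)}=e^{w}=\rho_F$ by L'H\^opital. You instead pass to logarithms, observe that $w_s=\log\rho_s$ satisfies the quasilinear equation $-\Delta_+w_s-(n-s)|dw_s|^2_{g_+}=s$ whose $s=n$ member is exactly the Fefferman--Graham equation, and solve for $u_s=w_s-w_F$ by a contraction in $\Lambda^{k+1,\alpha}_1(X)$ using invertibility of $-\Delta_+$ on that weight (valid since $\mathcal{Y}\geq 0$ gives $\mathrm{Spec}(-\Delta_+)\geq n^2/4>0$ and $\delta=1\in(0,n)$), then identify $e^{(n-s)(w_F+u_s)}$ with $v_s$ by the uniqueness of Lemma \ref{lem.def1}. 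Your approach buys a quantitative statement, $\rho_s=\rho_F e^{u_s}$ with $\|u_s\|_{\Lambda^{k+1,\alpha}_1(X)}=O(n-s)$, hence convergence in norm rather than the pointwise limit the paper records, and it replaces the paper's unproved differentiability of $s\mapsto v_s$ at $s=n$ by an explicit fixed-point construction; the paper's approach is shorter and needs no nonlinear analysis. Two small corrections: the isomorphism $-\Delta_+:\Lambda^{k+1,\alpha}_1(X)\to\Lambda^{k-1,\alpha}_1(X)$ is Lemma \ref{lem.FH1} (with $s=n$), not Lemma \ref{lem.def1}; and your closing worry about uniform-in-$s$ invertibility of $-\Delta_+-s(n-s)$ is moot for the $s\to n$ limit, since your fixed-point map only inverts the fixed operator $-\Delta_+$ (that uniformity, or your implicit-function-theorem remark, is only needed for continuity of $s\mapsto\rho_s$ at $s\neq n$, which both you and the paper treat briefly). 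Also, to place $|dw_F|^2_{g_+}-1$ in $\Lambda^{k-1,\alpha}_1(X)$ it is cleaner to use the decomposition $w_F=\log\rho+w'$, $w'\in\Lambda^{k+1,\alpha}_1(X)$, from the proof of Lemma \ref{lem.def2} rather than boundary H\"older regularity of $\rho_F$, since the latter may be limited when $n$ is even.
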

\begin{proof}
For $\mathcal{Y}(M,[\hat{g}])\geq 0$, $v_s$ is a continuous family of solutions to (\ref{eq.def1}). While $s=n$, $v_s=1$. Take a derivative in $s$ at $s=n$ and denote 
$$
\left.\frac{dv_s}{ds}\right|_{s=n}=-w.
$$
Then $w$ satisfies (\ref{eq.def2}). By L'Hospital rule,
$$
\lim_{s\rightarrow n} \rho_s =\lim_{s\rightarrow n} v_s^{\frac{1}{n-s}}
=\lim_{s\rightarrow n} e^{\frac{1}{n-s}\ln v_s} =e^w=\rho_F. 
$$
\end{proof}

In the following of the paper, we will always assume 
$$
\mathcal{Y}(M,[\hat{g}])\geq 0.
$$
So that for all $s>\frac{n}{2}$, $\rho_s$ and $\bar{g}_s$ are well defined with given boundary representative $\hat{g}$. We will denote $\rho_n=\rho_F$ for consistency.

\subsection{Interior Regularity and Curvature Vanishing}
Notice that $g_+=\rho^{-2}\bar{g}$ has $C^{k,\alpha}$ regularity in the interior $X$. Therefore by the interior Schauder estimates for equations (\ref{eq.def1}) and (\ref{eq.def2}), for all $s>\frac{n}{2}$
$$
\rho_s\in C^{k+1,\alpha}(X),\quad \bar{g}_s\in C^{k,\alpha}(X).  
$$ 
This also can be seen from the proof of Lemma \ref{lem.def1} and \ref{lem.def2}. 

The adapted and Fefferman-Graham compactifications are introduced so that the compactified metric $\bar{g}_s$ possesses some certain geometric features. Let $\bar{R}_s$ be the scalar curvature of $\bar{g}_s$,  with $\bar{J}_s=\frac{1}{2n}\bar{R}_s$,  and $\bar{Q}_{2N}^{\bar{g}_s}$ its $2N$-th order Q-curvautre. Let $\bar{\Delta}_s$ be the Laplace-Beltrami operator for $\bar{g}_s$.
Direct calculation shows that in $X$, 
\begin{equation}\label{eq.bdf}
\bar{\Delta}_s \rho_s=-s \bar{T}_s,
\end{equation}
where
\begin{equation}
\bar{T}_s=	 \rho_s^{-1}\left(1-|d\rho_s|^2_{\bar{g}_s}\right),
\end{equation}
and
\begin{equation}\label{eq.J}
\bar{J}_s=\frac{2s-n-1}{2} \rho_s^{-2}\left(1-|d \rho_s|^2_{\bar{g}_s}\right).
\end{equation}
Therefore $\bar{T}_s, \bar{J}_s\in C^{k,\alpha}(X)$ and $\bar{Q}_{2N}^{\bar{g}_s}\in C^{k-2N+2,\alpha}(X)$.

\begin{lemma}\label{lem.Q}
If $s=\frac{n}{2}+N-\frac{1}{2}$ for some integer $1\leq N \leq [\frac{k+1}{2}]$, then $(\overline{X}, \bar{g}_s)$ has vanishing $\bar{Q}^{\bar{g}_s}_{2N}$ curvature in $X$. 
\end{lemma}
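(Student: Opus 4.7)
The plan is to exploit the commuting factorization of $P^+_{2N}$ together with the defining equation for $v_s$: the exponent singled out by the adapted compactification makes one factor of $P^+_{2N}$ annihilate the conjugating function.

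First I carry out the key arithmetic. Writing $s = \frac{n+2N-1}{2}$, I observe that the $j=1$ factor in
\[ P^+_{2N} = \prod_{j=1}^N\left(-\Delta_+ - \tfrac{(n+2N-4j+3)(n-2N+4j-3)}{4}\right) \]
is exactly $-\Delta_+ - s(n-s)$, because $\tfrac{(n+2N-1)(n-2N+1)}{4} = s(n-s)$. The conjugating exponent obeys $\tfrac{n+1}{2} - N = \tfrac{n+1-2N}{2} = n-s$, hence
\[ \rho_s^{(n+1)/2 - N} = \rho_s^{n-s} = v_s, \]
the solution of $(-\Delta_+ - s(n-s))v_s = 0$ used to define $\rho_s$.

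Next, since every factor of $P^+_{2N}$ is a polynomial in the single operator $-\Delta_+$, they pairwise commute. Writing $P^+_{2N} = R \circ (-\Delta_+ - s(n-s))$ with $R$ the product of the remaining $N-1$ factors, I get $P^+_{2N}(v_s) = R(0) = 0$ in $X$. The conjugation formula from Section \ref{sec.2} then gives
\[ \bar P_{2N}(1) = \rho_s^{-(n+1)/2-N}\, P^+_{2N}\bigl(\rho_s^{(n+1)/2-N}\bigr) = \rho_s^{-(n+1)/2-N}\, P^+_{2N}(v_s) = 0, \]
so the definition $\bar Q^{\bar g_s}_{2N} = \tfrac{2}{n+1-2N}\bar P_{2N}(1)$ yields $\bar Q^{\bar g_s}_{2N} = 0$ on $X$ whenever $n+1 \neq 2N$. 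A quick regularity check: from the proof of Lemma \ref{lem.def1}, $v_s \in C^{k+1,\alpha}(X)$, and the hypothesis $N \leq [(k+1)/2]$ guarantees $2N \leq k+1$, so $P^+_{2N}$ may be applied to $v_s$ in the classical sense.

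The only subtlety is the critical case $2N = n+1$, which forces $s = n$ and $\bar g_s = \bar g_F$. The prefactor $\tfrac{2}{n+1-2N}$ is then singular and $\bar Q_{2N}$ must be interpreted via the formal dimension-continuation limit announced after its definition. The algebraic annihilation still holds (now $P^+_{2N}(1) = 0$, since the first factor degenerates to $-\Delta_+$), and the remaining task is to verify that the renormalization limit absorbs the apparent singularity and still yields $\bar Q^{\bar g_s}_{2N} = 0$. This is where I expect the most care to be needed; the rest of the argument is purely algebraic.
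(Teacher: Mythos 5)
Your noncritical case is correct and is essentially the paper's own argument, with the useful details made explicit: for $s=\frac{n+2N-1}{2}\neq n$ the $j=1$ factor of $P^+_{2N}$ is $-\Delta_+-s(n-s)$, the factors commute, so $P^+_{2N}(v_s)=0$ with $v_s=\rho_s^{\frac{n+1}{2}-N}$, and conjugation by $\rho_s$ gives $\bar P^{\bar g_s}_{2N}(1)=0$, hence $\bar Q^{\bar g_s}_{2N}=0$; the regularity remark is also fine.

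The genuine gap is the critical case $2N=n+1$, i.e.\ $s=n$ and $\bar g_s=\bar g_F$. This case is squarely within the lemma's hypotheses (it occurs whenever $n$ is odd and $k\geq n$, since then $N=\frac{n+1}{2}\leq[\frac{k+1}{2}]$), and you explicitly leave its verification to ``the renormalization limit'' without carrying it out. Moreover, the observation you offer, $P^+_{n+1}(1)=0$, is not the relevant identity: in the critical dimension the Q-curvature transforms inhomogeneously, $e^{(n+1)w}\,\bar Q^{\bar g_F}_{n+1}=P^+_{n+1}w+Q^+_{n+1}$ with $\bar g_F=e^{2w}g_+$, so what must be shown is that the nonzero bulk term $Q^+_{n+1}$ is cancelled by $P^+_{n+1}w$, and annihilation of constants says nothing about this. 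The paper closes the case by invoking Gover's formulas for the Einstein metric $g_+$: $P^+_{n+1}=\left(-\Delta_+-(n-1)\right)\left(-\Delta_+-3(n-3)\right)\cdots\left(-\Delta_+-2(n-2)\right)\left(-\Delta_+\right)$ and $Q^+_{n+1}=(-1)^{\frac{n+1}{2}}n!$. Since the Fefferman--Graham defining equation gives $-\Delta_+w=n$, the last factor turns $w$ into the constant $n$, and applying the remaining factors yields $P^+_{n+1}w=(-1)^{\frac{n-1}{2}}\,n\,(n-2)!!\,(n-1)!!=-(-1)^{\frac{n+1}{2}}n!=-Q^+_{n+1}$; hence $P^+_{n+1}w+Q^+_{n+1}=0$ and $\bar Q^{\bar g_F}_{n+1}=0$ in $X$. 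Without this computation (or an honest justification of the dimensional-continuation limit you allude to), your argument proves the lemma only for $2N\neq n+1$.
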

\begin{proof}
If 	$s=\frac{n}{2}+N-\frac{1}{2} \neq n$, then (\ref{eq.def1}) implies that
$$
\begin{aligned}
& P_{2N}^+(v_s)=P_{2N}^+\left(\rho^{\frac{n+1}{2}-N}_s\right)=0, 
\\ \Longrightarrow\quad
& \bar{P}^{\bar{g}_s}_{2N}(1) = \rho^{-\frac{n+1}{2}-N}P_{2N}^+\left(\rho^{\frac{n+1}{2}-N}_s\right)=0,
\\ \Longrightarrow\quad
& \bar{Q}^{\bar{g}_s}_{2N}=0, \quad \textrm{in $X$}. 
\end{aligned}
$$
If $s=\frac{n}{2}+N-\frac{1}{2} = n$, i.e. $2N=n+1$, by \cite[Theorem 1.1-1,2]{Go1}, the $(n+1)$-order GJMS operator and Q-curvature of $(X, g_+)$ is 
$$
\begin{gathered}
P^+_{n+1}= \left(-\Delta_+-(n-1)\right)\left(-\Delta_+-3(n-3)\right)\cdots 
\left(-\Delta_+-(n-2)2\right)\left(-\Delta_+\right), 
\\
Q^+_{n+1}=(-1)^\frac{n+1}{2}n!. 
\end{gathered}
$$
Then (\ref{eq.def2}) implies that
$$
0=P^+_{n+1}w+Q^+_{n+1}=\bar{Q}^{\bar{g}_s}_{n+1} e^{(n+1)w}
\quad\Longrightarrow \quad
\bar{Q}^{\bar{g}_s}_{n+1}=0, \ \textrm{in $X$}. 
$$
\end{proof}

We want to point out that in \cite{CC1} the authors  considered the metric measure space and defined the more general $\bar{Q}_{2N,\phi}^m$ curvatures, and proved that if $s=\frac{n+2N-1-m}{2}$, then $\bar{Q}_{2N,\phi}^m=0$ for $\bar{g}_s$. In later applications, the equation $\bar{Q}^{\bar{g}_s}_{2N}=0$ will help us to lift the regularity of $\bar{J}_s$. However, $\bar{Q}_{2N,\phi}^m=0$ can not. So we avoid going to the details of $\bar{Q}_{2N,\phi}^m$ here and ask the readers to refer to \cite{CC1} if interested.

\subsection{Global Regularity of $\rho_s$ and $\bar{g}_s$}
It is obvious that the regularity of $\bar{g}_s$ depends on both the initial regularity of $\bar{g}$ and the parameter $s$.  Even if $g_+$ is $C^{\infty}$ conformally compact and the geodesic normal defining function $x$ is smooth in Lemma \ref{lem.GDef}, we know that $v_s$ takes the form $v_s=x^s F+x^{n-s}G$ for some $F,G\in C^{\infty}(\overline{X})$ if  $2s-n$ is fractional. And the regularity of $\rho_s$ is stoped by this fractional order. Therefore it is nature to conjecture that the regularity of $\rho_s$ and $\bar{g}_s$ is given by the minimum of the background regularity of $\bar{g}$ and the fractional order $2s-n$. We give a strict proof here.

\subsubsection{Case 1:  $s>\frac{n}{2}, s\neq n$}
In this case, denote
$$
\bar{g}_s=\rho_s^2g_+=\phi_s^{\frac{2}{n-s}} \bar{g}, \quad \phi_s=\left(\frac{\rho_s}{\rho}\right)^{n-s}. 
$$
Then $v_s=\phi_s\rho^{n-s}$ and (\ref{eq.def1}) implies that
$$
\Delta_+v_s+s(n-s)v_s=\rho^{n-s}P(\phi_s),
$$ 
where $P=\rho^{s-n}(\Delta_++s(n-s))\rho^{n-s}$. Let us write $P$ in background metric $\bar{g}$: 
$$
P\phi_s= \rho^2\bar{\Delta}\phi_s -(2s-n-1)\rho\langle d\rho, d\phi_s\rangle_{\bar{g}}
+(n-s)\rho\left(\bar{\Delta}\rho +sT\right) \phi_s, 
$$
where
$$
T=\frac{1-|d\rho|^2_{\bar{g}}}{\rho}
\in C^{k-1,\alpha}(\overline{X}).
$$
Therefore, $\phi_s$ satisfies
\begin{equation}\label{eq.phi}
\begin{cases}
P \phi_s=0, & \textrm{in $X$}, 
\\
\phi_s=1,  & \textrm{on $M$}. 
\end{cases}
\end{equation}

We begin by constructing an asymptotical solution for (\ref{eq.phi}). 

\begin{lemma}\label{lem.reg0}
	For $s>\frac{n}{2}, s\neq n$, there exists an approximate solution $\tilde{\phi}_s   \in  C^{k,\alpha}(\overline{X})$ such that
	$$
	P(\tilde{\phi}_s) = \tilde{F}, \quad \tilde{F}\in \Lambda^{k-1,\alpha}_{\delta}(X). 
	$$
	where $\delta=k+\alpha$ if $k+\alpha\leq 2s-n$ or $2s-n\notin\mathbb{N}$; 
	$\delta=2s-n$ if $k+\alpha>2s-n$ and $2s-n\in \mathbb{N}$. 
	
	Moreover, there exists a constant $C$ only depending on $k,\alpha$ and the $C^{k,\alpha}(\overline{X})$ norm of $\bar{g}$ such that
	$$
	\|\tilde{\phi}_s\|_{  C^{k,\alpha}(\overline{X})} \leq C, 
	\quad
	\| \tilde{F}\|_{\Lambda^{k-1,\alpha}_{\delta}(X)}\leq C;
	$$
	and as $\rho\rightarrow 0$, 	$\tilde{\phi}_s$ has asymptotic expansion
	$$
	\tilde{\phi}_s=1+\rho \phi_{s,1}+\cdots +\rho^l  \phi_{s,l},
	$$
where  $l=k$ if $k+\alpha\leq 2s-n$ or $2s-n\notin\mathbb{N}$; $l=2s-n-1$ if $k+\alpha>2s-n$ and $2s-n\in \mathbb{N}$. 
Here $\phi_{s,j}\in C^{k-j,\alpha}(M)$ is determined by the derivatives of $\bar{g}$ up to  $j$-th order and
	$$
	\|\phi_{s,j}\|_{ C^{k-j,\alpha}(M)}\leq C. 
	$$
\end{lemma}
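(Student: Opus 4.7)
The strategy is to build $\tilde\phi_s$ as a formal Taylor polynomial in $\rho$ at $M$ whose coefficients are determined by an indicial recursion, and then to assemble the result via the Borel-type extension (Theorem \ref{thm.ext}) from the appendix. First I would carry out the indicial analysis. Using $|d\rho|_{\bar g}^2|_M=1$ and the fact that the subprincipal terms of $P$ carry an explicit factor of $\rho$, a direct computation gives
\[
P(\rho^j u) = j\bigl(j-(2s-n)\bigr)\rho^j u|_M + O(\rho^{j+1}) \quad \text{as } \rho\to 0,
\]
so that the indicial polynomial of $P$ is $I(j)=j\bigl(j-(2s-n)\bigr)$, with roots at $j=0$ and $j=2s-n$. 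In particular $I(j)\ne 0$ for every positive integer $j<2s-n$, which is exactly the range of indices we intend to use.

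Setting $\phi_{s,0}=1$, I would solve the Taylor recursion inductively. Expanding $\bar g$, $\bar\Delta\rho$ and $T$ in $\rho$ at $M$ and collecting the $\rho^j$-coefficient in $P\bigl(\sum_{i<j}\rho^i\phi_{s,i}\bigr)$ yields an algebraic equation of the form $I(j)\phi_{s,j}=R_j$, where $R_j$ depends polynomially on the $j$-jet of $\bar g$ at $M$ and on tangential derivatives (of order at most two) of $\phi_{s,0},\dots,\phi_{s,j-1}$. Since $I(j)\ne 0$ in the admissible range, this uniquely determines $\phi_{s,j}$, and careful regularity bookkeeping gives $\phi_{s,j}\in C^{k-j,\alpha}(M)$ with a bound depending only on $k,\alpha,s$ and $\|\bar g\|_{C^{k,\alpha}(\overline X)}$. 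The recursion can be pushed as far as the indicial obstruction permits, which is precisely the case split of the statement: $l=k$ whenever either $2s-n\notin\mathbb N$ or $k+\alpha\le 2s-n$, and $l=2s-n-1$ otherwise.

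To pass from the formal sum $\sum_{j=0}^l\rho^j\phi_{s,j}$ to a genuine $C^{k,\alpha}(\overline X)$ function, I would invoke Theorem \ref{thm.ext}: it produces $\tilde\phi_s\in C^{k,\alpha}(\overline X)$ whose normal Taylor jet at $M$ up to order $l$ matches the prescribed sequence $\{\phi_{s,j}\}$. By the formal cancellation built into the previous step, $\tilde F:=P(\tilde\phi_s)$ then vanishes at $M$ to order $\delta$ in a $C^{k-1,\alpha}$ sense, that is, $\tilde F\in C^{k-1,\alpha}_{(\delta)}(\overline X)$ with $\delta$ as claimed. Lemma \ref{lem.FS2} supplies the continuous embedding $C^{k-1,\alpha}_{(\delta)}(\overline X)\hookrightarrow \Lambda^{k-1,\alpha}_\delta(X)$, from which the weighted bound on $\tilde F$ follows with the same dependence on $\|\bar g\|_{C^{k,\alpha}(\overline X)}$.

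The main difficulty is that the individual boundary coefficients $\phi_{s,j}$ lose one derivative of regularity as $j$ grows, so a naive polynomial sum $\sum_{j=0}^l\rho^j\phi_{s,j}$ collapses to $C^{0,\alpha}$ near $M$ when $l=k$. Theorem \ref{thm.ext} is precisely what allows us to reassemble jets of decreasing regularity into one function that is uniformly $C^{k,\alpha}(\overline X)$; verifying that the regularity profile $C^{k-j,\alpha}(M)$ is an admissible input for this extension, and that the resulting remainder $\tilde F$ inherits the correct weighted H\"older decay, is the technical core of the argument. The threshold $l\le 2s-n-1$ in the integer-indicial case is the reflection of the unavoidable indicial obstruction, which in the full asymptotic picture would produce a logarithmic term in $\rho$.
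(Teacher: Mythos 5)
Your proposal is correct and follows essentially the same route as the paper: determine the boundary coefficients $\phi_{s,j}$ inductively from the indicial relation $P(\rho^j u)\sim j\bigl(j-(2s-n)\bigr)\rho^j u$ (roots $0$ and $2s-n$, hence the stated case split), extend each $\rho^j\phi_{s,j}$ to a $C^{k,\alpha}(\overline X)$ function via Theorem \ref{thm.ext} so that the loss of boundary regularity is absorbed, and put the remainder into the weighted space via the embedding of Lemma \ref{lem.FS2}. The only technical adjustment is at the last step: since $C^{k-1,\alpha}_{(\delta)}(\overline X)$ is only defined for $\delta\le k-1+\alpha$, you should (as the paper does) first show $\rho^{-1}\tilde F\in C^{k-1,\alpha}_{(\delta-1)}(\overline X)\hookrightarrow\Lambda^{k-1,\alpha}_{\delta-1}(X)$ and then multiply back by $\rho$ to conclude $\tilde F\in\Lambda^{k-1,\alpha}_{\delta}(X)$.
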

\begin{proof}
Let $s=\frac{n}{2}+\gamma$. 
First $\bar{g}$ is $C^{k,\alpha}(\overline{X})$ and $\rho\in C^{\infty}(\overline{X})$, which together implies that the coefficients in $P$ are in $C^{k-1,\alpha}(\overline{X})$. 
By Theorem \ref{thm.ext}, for any $\phi_{s,j}\in C^{k-j,\alpha}(M)$, we can take $\tilde{\phi}_{s,j}\in C^{k,\alpha}(\overline{X})$   as  an extension of  $\rho^j\phi_{s,j}$ such that $\tilde{\phi}_{s,j}\sim \rho^j\phi_{s,j}$ as $\rho \rightarrow 0$. Moreover, there exists some constant $C'$ only depending on $k,j,\alpha$ such that
$$
\|\tilde{\phi}_{s,j}\|_{C^{k,\alpha}(\overline{X})} \leq C'\|\phi_{s,j}\|_{C^{k-j,\alpha}(M)}. 
$$
Notice that under the background coordinates $(\rho,\theta)$, $P$ is in the form
$$
P=\rho\left(a^{\mu\nu}\rho\partial_{\mu}\partial_{\nu}+b^{\mu}\rho \partial_{\mu}+b'^{\mu} \partial_{\mu}+c\right),
$$
where $a^{\mu\nu}, b'^{\mu}\in C^{k,\alpha}(\overline{X})$ and $b^{\mu},c \in C^{k-1,\alpha}(\overline{X})$. 
By (3) in Theorem \ref{thm.ext}, $ \rho^{-1}P \tilde{\phi}_{s,j}\in  C^{k-1,\alpha}(\overline{X}) $, and there exists some constant $C''$ only depending on the $C^{k,\alpha}(\overline{X})$ norm of  $\bar{g}$,   such that
$$
\| \rho^{-1}P \tilde{\phi}_{s,j}\|_{C^{k-1,\alpha}(\overline{X})} \leq C''\|\phi_{s,j}\|_{C^{k-j,\alpha}(M)}. 
$$
As $\rho\rightarrow 0$, 
$$
P(\tilde{\phi}_{s,j})= -j(2\gamma-j)\rho^j\phi_{s,j}(\theta)+O(\rho^{j+\epsilon}),
$$
where $\epsilon=1$ if $j<k$ and $\epsilon=\alpha$ if $j=k$. 

Now we are ready to construct the asymptotical solution $\tilde{\phi}_s$.
Take $\tilde{\phi}_{s,0}=\phi_{s,0}=1$. Then
$$
P(1)=(n-s)\rho\left(\bar{\Delta}\rho +sT\right)=\tilde{F_0}\sim\rho F_0, 
$$
where $\tilde{F}_0\in \rho C^{k-1,\alpha}(\overline{X}) $ and $F_0=(n-s)\left(\bar{\Delta}\rho +sT\right)|_{M}\in C^{k-1,\alpha}(M)$. Moreover, for some constant $C_1$ only depending on the $C^{k,\alpha}(\overline{X})$ norm of  $\bar{g}$, 
$$
\|\rho^{-1}\tilde{F}_0\| _{C^{k-1,\alpha}(\overline{X})} \leq C_1, 
\quad
\|F_0\|_{C^{k-1,\alpha}(M)} \leq C_1. 
$$
Choose $\phi_{s,1}=(2\gamma-1)^{-1}F_0$.  Let $\tilde{\phi}_{s,1}\in C^{k,\alpha}(\overline{X})$ be an extension of $\rho\phi_{s,1}$ by Theorem \ref{thm.ext}. Then
$$
P(1+\tilde{\phi}_{s,1})=\tilde{F_1}\sim \rho^2 F_1, 
$$
where $\tilde{F}_1\in \rho C^{k-1,\alpha}(\overline{X})$ and $F_1\in C^{k-2,\alpha}(M)$. And similar as above for some constant $C_2$ only depending on the $C^{k,\alpha}(\overline{X})$ norm of  $\bar{g}$, 
$$
\| \rho^{-1}\tilde{F}_1\| _{C^{k-1,\alpha}(\overline{X})} \leq C'_2, 
\quad
\|F_1\|_{C^{k-2,\alpha}(M)} \leq C_2.
$$
Choose $\phi_{s,2}=[2(2\gamma-2)]^{-1}F_1$.

If $k+\alpha\leq 2\gamma$ or $2\gamma\notin\mathbb{N}$, we can repeat this progress until order $k$ such that  $\phi_{s,k}\in C^{0,\alpha}(M)$. Let $\tilde{\phi}_{s,k}\in C^{k,\alpha}(\overline{X})$ be an extension of $\rho^k\phi_{s,k}$ by Theorem \ref{thm.ext}. 
$$
P(1+\tilde{\phi}_{s,1}+\cdots+\tilde{\phi}_{s,k})=\tilde{F_k}=O(\rho^{k+\alpha} ), 
$$
where $\tilde{F}_k\in \rho C^{k-1,\alpha}(\overline{X})$. And similar as above for some constant $C_k$ only depending on the $C^{k,\alpha}(\overline{X})$ norm of  $\bar{g}$, 
$$
\|\rho^{-1}\tilde{F}_k\| _{C^{k-1,\alpha}(\overline{X})} \leq C_k.
$$
Finally, we take $\tilde{\phi}_s=1+\tilde{\phi}_{s,1}+\cdots +\tilde{\phi}_{s,k}$ and $\tilde{F}=\tilde{F}_k$. 
By Lemma \ref{lem.FS2}, $\rho^{-1}\tilde{F}_k\in  C^{k-1,\alpha}(\overline{X})$ and $\rho^{-1}\tilde{F}_k=O(\rho^{k-1+\alpha})$ imply that
$$
\rho^{-1}\tilde{F}\in C^{k-1,\alpha}_{(k-1+\alpha)}(\overline{X})\subset  \Lambda^{k-1,\alpha}_{k-1+\alpha}(X)
\quad\Longrightarrow \quad \tilde{F}\in  \Lambda^{k-1,\alpha}_{k+\alpha}(X).
$$
By Lemma \ref{lem.FS3}, \ref{lem.FS2},  the inclusion  $C^{k-1,\alpha}_{(k-1+\alpha)}(\overline{X})\hookrightarrow \Lambda^{k-1,\alpha}(\overline{X})$ is continuous and $C^{k-1,\alpha}_{(k-1+\alpha)}$ is a closed subspace of  $C^{k-1,\alpha}(\overline{X})$. Hence there exists a constant $\tilde{C}$ only depending on $X$ such that
$$
\|\tilde{F}\|_{\Lambda^{k-1,\alpha}_{k+\alpha}(X)}=\|\rho^{-1}\tilde{F} \|_{\Lambda^{k-1,\alpha}_{k-1+\alpha}(X)} \leq 
\tilde{C} \|\rho^{-1}\tilde{F} \|_{C^{k-1,\alpha}(\overline{X})}\leq \tilde{C}C_k. 
$$

If $k+\alpha>2\gamma$ and $2\gamma\in \mathbb{N}$, then we stop at the order $2\gamma-1$. And
$$
P(1+\tilde{\phi}_{s,1}+\cdots+\tilde{\phi}_{s,2\gamma-1})=\tilde{F}_{2\gamma-1}=O(\rho^{2\gamma} ), 
$$
where $\tilde{F}_{2\gamma-1}\in \rho C^{k-1,\alpha}(\overline{X})$ and
$$
\|\rho^{-1}\tilde{F}_{2\gamma-1}\| _{C^{k-1,\alpha}(\overline{X})} \leq C_{2\gamma-1}.
$$
Finally, we take $\tilde{\phi}_s=1+\tilde{\phi}_{s,1}+\cdots +\tilde{\phi}_{s,2\gamma-1}$ and $\tilde{F}=\tilde{F}_{2\gamma-1}$. Then
$$
\rho^{-1}\tilde{F}\in C^{k-1,\alpha}_{(2\gamma-1)}(\overline{X})\subset  \Lambda^{k-1,\alpha}_{2\gamma-1}(X)
\quad\Longrightarrow \quad \tilde{F}\in  \Lambda^{k-1,\alpha}_{2\gamma}(X).
$$
Moreover, 
$$
\|\tilde{F}\|_{\Lambda^{k-1,\alpha}_{2\gamma}(X)}=\|\rho^{-1}\tilde{F} \|_{\Lambda^{k-1,\alpha}_{2\gamma-1}(X)} \leq 
\tilde{C} \|\rho^{-1}\tilde{F} \|_{C^{k-1,\alpha}(\overline{X})}\leq \tilde{C}C_{2\gamma-1}. 
$$
We finish the proof. 
\end{proof}

\begin{remark}\label{rmk.4}
While $k+\alpha>2s-n$ and $2s-n=2\gamma\in\mathbb{N}$, actually we can construct a full asymptotical solution such that $P(\tilde{\phi})=O(\rho^{k+\alpha-\epsilon})$ if we allow logrithmic terms in. However, here it does not help to discuss the global H\"{o}lder regularity, so we omit it. 
\end{remark}

\begin{lemma}\label{lem.reg1}
For $s>\frac{n}{2}, s\neq n$, the equation (\ref{eq.phi}) has a unique solution  $\phi_s\in C^{l,\beta}(\overline{X})$,   where $l+\beta=k+\alpha$ if $k+\alpha<2s-n$; $l+\beta<2s-n$ if $k+\alpha\geq 2s-n$.  
Moreover, there exists some constant $C>0$ only depending on the $C^{k,\alpha}(\overline{X})$ norm of $\bar{g}$ such that 
$$
\|\phi_s\|_{C^{l,\beta}(\overline{X})}\leq C. 
$$
\end{lemma}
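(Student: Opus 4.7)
The plan is to solve (\ref{eq.phi}) by writing $\phi_s = \tilde{\phi}_s + \psi_s$, with $\tilde{\phi}_s$ the approximate solution constructed in Lemma \ref{lem.reg0}, and to obtain the correction $\psi_s$ from the invertibility theory of Section \ref{sec.3}. Since $\tilde{\phi}_s|_M = 1 = \phi_s|_M$, the correction must solve
\begin{equation*}
P \psi_s = -\tilde{F}, \qquad \psi_s|_M = 0,
\end{equation*}
where $\tilde{F}$ is already controlled in a weighted H\"older space. Observing that $P = \rho^{-(n-s)}(\Delta_+ + s(n-s))\rho^{n-s} = P_\lambda$ with $\lambda = n-s$, the indicial gap in Lemma \ref{lem.FH2} becomes $0 < \delta < 2s-n$.

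When $k+\alpha < 2s-n$, Lemma \ref{lem.reg0} places $\tilde{F}$ in $\Lambda^{k-1,\alpha}_{k+\alpha}(X)$ with the weight $\delta = k+\alpha$ sitting inside $(0, 2s-n)$. Lemma \ref{lem.FH2} combined with the Schauder estimate in Lemma \ref{lem.SE} then produces a unique $\psi_s \in \Lambda^{k+1,\alpha}_{k+\alpha}(X)$, and the continuous embedding $\Lambda^{k,\alpha}_{k+\alpha}(X) \hookrightarrow C^{k,\alpha}(\overline{X})$ from Lemma \ref{lem.FS2} yields $\psi_s \in C^{k,\alpha}(\overline{X})$. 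When $k+\alpha \geq 2s-n$, Lemma \ref{lem.reg0} provides $\tilde{F} \in \Lambda^{k-1,\alpha}_{\delta_0}(X)$ for $\delta_0 \in \{k+\alpha, 2s-n\}$. Given any prescribed $l, \beta$ with $l+\beta < 2s-n$, set $\delta = l+\beta \in (0, 2s-n)$; since $\delta \leq \delta_0$, the monotonicity of weighted spaces gives $\tilde{F} \in \Lambda^{k-1,\alpha}_{\delta}(X)$, and Lemma \ref{lem.FH2} produces $\psi_s \in \Lambda^{l+2,\beta}_{l+\beta}(X) \hookrightarrow C^{l,\beta}(\overline{X})$.

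For uniqueness, if $\phi_s'$ is another $C^{l,\beta}(\overline{X})$ solution of (\ref{eq.phi}) with $\phi_s'|_M = 1$, then $v := \phi_s' \rho^{n-s}$ satisfies $(\Delta_+ + s(n-s))v = 0$ and $v = \rho^{n-s}(1 + O(\rho^\beta))$, so the uniqueness portion of Lemma \ref{lem.def1} forces $v = v_s$, hence $\phi_s' = \phi_s$. The uniform $C^{l,\beta}(\overline{X})$ bound then tracks through the controls on $\tilde{\phi}_s$ and $\tilde{F}$ supplied by Lemma \ref{lem.reg0} together with the boundedness of the inverse of $P_\lambda$ from Lemma \ref{lem.FH2}.

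The main obstacle is the strict restriction $\delta < 2s-n$ dictated by the indicial gap of $P_\lambda$: outside this interval Fredholm theory is unavailable, and this is precisely what enforces the loss of regularity in the regime $k+\alpha \geq 2s-n$. This reflects the presence of a fractional-order asymptotic term $\rho^{2s-n}$ in $\phi_s$, and explains why the critical exponent $2s-n$ itself can only be approached but not reached in Case (b).
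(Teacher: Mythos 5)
Your proposal is correct and follows essentially the same route as the paper: correct the approximate solution of Lemma \ref{lem.reg0} by inverting $P=P_{\lambda}$ with $\lambda=n-s$ in the indicial gap $0<\delta<2s-n$ (Lemma \ref{lem.FH2}), then pass to classical H\"older spaces via Lemma \ref{lem.FS2}, with exactly the same case split at $k+\alpha$ versus $2s-n$. Your explicit uniqueness step via Lemma \ref{lem.def1} is only a minor elaboration of what the paper leaves implicit.
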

\begin{proof}
Denote $s=\frac{n}{2}+\gamma$.  
Take  $\tilde{\phi}_s$  the approximate solution in Lemma \ref{lem.reg0}. Now we modify it to be a true solution. 
First, consider $k+\alpha<2\gamma$. In this case, 
$$
P\tilde{\phi}_s=\tilde{F}\in \Lambda_{k+\alpha}^{k-1,\alpha}(X). 
$$
By Lemma \ref{lem.FH2},
$$
P: \Lambda_{k+\alpha}^{k+1,\alpha}(X) \longrightarrow \Lambda_{k+\alpha}^{k-1,\alpha}(X), 
$$
is invertible. Hence we can solve a unique 
$\tilde{\phi}'_s \in \Lambda_{k+\alpha}^{k+1,\alpha}(X)$ such that
$$
P(\tilde{\phi}_s+\tilde{\phi}_s')=0. 
$$
Moreover, there exists a constant $C$ only depending on the $C^{k,\alpha}(\overline{X})$ norm of $\bar{g}$ such that
$$
\|\tilde{\phi}_s'\|_{\Lambda_{k+\alpha}^{k+1,\alpha}(X)} \leq C \|\tilde{F}\|_{\Lambda_{k+\alpha}^{k-1,\alpha}(X)}. 
$$
By Lemma \ref{lem.FS2}, $\Lambda_{k+\alpha}^{k+1,\alpha}(X)\hookrightarrow C^{k,\alpha}(\overline{X})$ is continuous. Hence for some constant $\tilde{C}$ only depending on $X$, we have
$$
\|\tilde{\phi}_s'\|_{C^{k,\alpha}(\overline{X})} \leq \tilde{C} C \|\tilde{F}\|_{\Lambda_{k+\alpha}^{k-1,\alpha}(X)}. 
$$

Second, if $k+2\alpha\geq 2\gamma$, then
$$
P\tilde{\phi}_s=\tilde{F}\in \Lambda_{2\gamma}^{k-1,\alpha}(X)\subset \Lambda_{l+\beta}^{k-1,\alpha}(X), 
$$
for $l\geq 0$, $0<\beta<1$ such that $l+\beta<2\gamma$. 
By Lemma \ref{lem.FH2} again
$$
P: \Lambda_{l+\beta}^{k+1,\alpha}(X) \longrightarrow \Lambda_{l+\beta}^{k-1,\alpha}(X), 
$$
is invertible.  Hence we can solve a unique 
$\tilde{\phi}'_s \in \Lambda_{l+\beta}^{k+1,\alpha}(X)$ such that
$$
P(\tilde{\phi}_s+\tilde{\phi}_s')=0. 
$$
Moreover, there exists a constant $C$ only depending on the $C^{k,\alpha}(\overline{X})$ norm of $\bar{g}$ and $C'$ only depending on the maximum value of $\rho$ such that
$$
\|\tilde{\phi}_s'\|_{\Lambda_{l+\beta}^{k+1,\alpha}(X)} 
\leq C \|\tilde{F}\|_{\Lambda_{l+\beta}^{k-1,\alpha}(X)}
\leq CC' \|\tilde{F}\|_{\Lambda_{2\gamma}^{k-1,\alpha}(X)}.
$$
By Lemma \ref{lem.FS2}, $\Lambda_{l+\beta}^{k+1,\alpha}(X)\hookrightarrow C^{l,\beta}(\overline{X})$ is continuous. Hence for some constant $\tilde{C}$ only depending on $X$, we have
$$
\|\tilde{\phi}_s'\|_{C^{l,\beta}(\overline{X})} \leq \tilde{C} CC' \|\tilde{F}\|_{\Lambda_{2\gamma}^{k-1,\alpha}(X)}. 
$$

Combing above discussion with Lemma \ref{lem.reg0}, we finish the proof. 
\end{proof}

\begin{lemma}\label{lem.reg2}
If $s=\frac{n}{2}+N-\frac{1}{2}$ for some integer $N\geq 1$, then $\phi_s\in C^{k,\alpha}(\overline{X})$ 
and there exists some constant $C>0$ only depending on the $C^{k,\alpha}(\overline{X})$ norm of $\bar{g}$ such that 
$$
\|\phi_s\|_{C^{k,\alpha}(\overline{X})}\leq C. 
$$
\end{lemma}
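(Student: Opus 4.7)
The strategy is to exploit the vanishing of the order-$2N$ $Q$-curvature (Lemma \ref{lem.Q}) in order to replace the degenerate second-order equation $P\phi_s=0$ of (\ref{eq.phi}) by a genuinely uniformly elliptic equation of order $2N$ for $\phi_s$ on $\overline{X}$, which can then be bootstrapped to the full $C^{k,\alpha}(\overline{X})$ regularity.

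The key algebraic observation is that for $s=\tfrac{n}{2}+N-\tfrac{1}{2}$ the GJMS conformal weight $(n+1-2N)/2$ coincides with the defining-function exponent $n-s$, so the conformal factor $\Omega_s=\rho_s/\rho=\phi_s^{1/(n-s)}$ satisfies $\Omega_s^{(n+1-2N)/2}=\phi_s$. Combined with the conformal covariance of the GJMS operator in dimension $n+1$,
$$
\bar{P}_{2N}^{\bar{g}_s}(f)=\Omega_s^{-(n+1+2N)/2}\,\bar{P}_{2N}^{\bar{g}}\bigl(\Omega_s^{(n+1-2N)/2}f\bigr),
$$
and the identity $\bar{P}_{2N}^{\bar{g}_s}(1)=\tfrac{n+1-2N}{2}\bar{Q}^{\bar{g}_s}_{2N}=0$ from Lemma \ref{lem.Q}, setting $f=1$ gives
$$
\bar{P}_{2N}^{\bar{g}}\,\phi_s = 0\qquad \text{in }X.
$$
The operator $\bar{P}_{2N}^{\bar{g}}$ is uniformly elliptic of order $2N$ on $\overline{X}$ with principal part $(-\bar{\Delta}_{\bar{g}})^N$; its coefficients are universal polynomials in $\bar{g}$, $\bar{g}^{-1}$ and derivatives of $\bar{g}$ of order $\leq 2N-2$, so they lie in H\"older classes (at worst $C^{k-2N,\alpha}(\overline{X})$) whose norms are controlled by $\|\bar{g}\|_{C^{k,\alpha}(\overline{X})}$. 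Note that the constraint $2N\leq k+1$ of Lemma \ref{lem.Q} is automatically satisfied in the regime where the statement is nontrivial, namely $k+\alpha\geq 2N-1$.

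To close the argument, Lemma \ref{lem.reg1} supplies the initial regularity $\phi_s\in C^{l,\beta}(\overline{X})$ for some $l+\beta$ just below $2N-1$, and the asymptotic expansion from Lemma \ref{lem.reg0} explicitly prescribes the boundary jets $\partial_\rho^j\phi_s|_M$ for $0\leq j\leq 2N-2$. Viewing $\phi_s$ as the solution of the Dirichlet-type boundary value problem $\bar{P}_{2N}^{\bar{g}}\phi_s=0$ in $X$ with the first $N$ of these normal derivatives as boundary data, classical up-to-boundary Schauder estimates for $2N$-th order elliptic boundary value problems iteratively upgrade $\phi_s$ to $C^{k,\alpha}(\overline{X})$, with a bound depending only on $\|\bar{g}\|_{C^{k,\alpha}(\overline{X})}$. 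The main technical obstacle is the higher-order Schauder bookkeeping at the boundary: one must verify the Lopatinskii--Shapiro coerciveness of the chosen Dirichlet data (automatic since $\bar{P}_{2N}^{\bar{g}}$ is a positive perturbation of $(-\bar{\Delta}_{\bar{g}})^N$) and confirm that the coefficient regularity $C^{k-2N,\alpha}$ is exactly what the Schauder bootstrap needs to reach the target $C^{k,\alpha}$ without further loss.
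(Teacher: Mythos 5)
Your key identity is correct, and in fact it follows even more directly from the paper's definitions than from general conformal covariance: since $\rho^{\frac{n+1}{2}-N}\phi_s=\rho^{\,n-s}\phi_s=v_s$ and the factor $-\Delta_+-s(n-s)$ occurs in $P^+_{2N}$, one has $\bar{P}^{\bar{g}}_{2N}\phi_s=\rho^{-\frac{n+1}{2}-N}P^+_{2N}(v_s)=0$ in $X$ (the case $2N=n+1$, i.e.\ $s=n$, is excluded here since $\phi_s$ is only defined for $s\neq n$). This is a genuinely different route from the paper's: the paper never forms a single order-$2N$ equation for $\phi_s$; instead it treats $Q^{\bar{g}_s}_{2N}=0$ as an equation of order $2N-2$ for $\bar{J}_s$ (with boundary value supplied by Lemma \ref{lem.asym1}), and then bootstraps $\phi_s$ through the \emph{second-order} conformal scalar-curvature equation (\ref{eq.2}) with background metric $\bar{g}$ and Dirichlet datum $\phi_s|_M=1$, alternating Schauder estimates for the two equations until the $C^{k,\alpha}$ regularity of $\bar{g}$ is saturated. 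Your approach buys a cleaner, one-equation formulation; the paper's buys the ability to work only with second-order Schauder theory, at the cost of the back-and-forth lifting between $\bar{J}_s$ and $\phi_s$. Your reading of the Dirichlet data is also fine: the correction $\phi_s-\tilde{\phi}_s$ produced in Lemma \ref{lem.reg1} lies in $\Lambda^{k+1,\alpha}_{l+\beta}(X)$ with $l+\beta$ arbitrarily close to $2N-1$, so the boundary jets $\partial_\rho^j\phi_s|_M$, $j\le 2N-2$, are indeed those of the expansion in Lemma \ref{lem.reg0}, with the stated H\"older classes.

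Two caveats. First, the borderline case $k=2N-1$ (allowed by the statement, e.g.\ $k=3$, $N=2$, $s=\frac{n}{2}+\frac{3}{2}$) is not covered by "classical up-to-boundary Schauder estimates" for your order-$2N$ problem: there the target space $C^{k,\alpha}(\overline{X})=C^{2N-1,\alpha}(\overline{X})$ lies \emph{below} the order of the equation, and the boundary data $\phi_{s,j}\in C^{k-j,\alpha}(M)$ are one derivative short of what an order-$2N$ Schauder estimate needs to conclude anything; you would need intermediate (sub-order) Schauder or Besov/H\"older--Zygmund versions of the Agmon--Douglis--Nirenberg theory, which is a nontrivial addition you should acknowledge. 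The paper's second-order bootstrap avoids this issue entirely, which is precisely one reason to route the argument through (\ref{eq.2}). Second, two smaller points of justification: the Lopatinskii--Shapiro (complementing) condition for the Dirichlet problem holds because it depends only on the principal symbol $(-\bar{\Delta})^N$ -- "positivity" of the lower-order perturbation is irrelevant to it; and since $\phi_s$ is a priori only in $C^{2N-2,\alpha}(\overline{X})\cap C^{k,\alpha}(X)$, you should say in what sense it solves the boundary value problem before invoking boundary regularity (e.g.\ as an $H^N$ weak solution with the prescribed traces, which is legitimate for $N\ge 2$ since $2N-2\ge N$). With these repairs, and noting that the worst coefficients of $\bar{P}^{\bar{g}}_{2N}$ actually lie in $C^{k-2N+2,\alpha}(\overline{X})$, your argument goes through for $k\ge 2N$ with constants depending only on $\|\bar{g}\|_{C^{k,\alpha}(\overline{X})}$, matching the lemma.
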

\begin{proof}
First, if $N=1$, then $s=\frac{n+1}{2}$ and (\ref{eq.R}) implies that
$$
\rho\bar{\Delta}\rho+s(1-|d\rho|^2_{\bar{g}})=-\bar{J} \rho^2, 
$$
where $\bar{J}=\frac{1}{2n}\bar{R}\in C^{k-2,\alpha}(\overline{X})$. 
Hence equation (\ref{eq.phi}) becomes 
$$
\begin{cases}
	\bar{\Delta}\phi_s -\frac{n-1}{2} \bar{J}\phi_s=0, & \textrm{in $X$}
	\\
	\phi_s=1, & \textrm{on $M$}.
\end{cases}
$$
By classical Schauder estimates, we have $\phi_s\in C^{k,\alpha}(\overline{X})$.

Next, consider $N\geq 2$. 
Notice that if $k+\alpha<2N-1$, it is already proved in Lemma \ref{lem.reg1}. So we only need to consider the case $k+\alpha \geq 2N-1$. First choose $l=2N-2$. Then $l+\alpha<2N-1$ and by Lemma \ref{lem.reg1}, we have
$$
\phi_s\in C^{k,\alpha}(X)\cap C^{l,\alpha}(\overline{X}). 
$$
Recall that by Lemma \ref{lem.Q}, in $X$,
\begin{equation}\label{eq.1}
Q^{\bar{g}_s}_{2N}=\Delta_{\bar{g}_s}^{N-1} \bar{J}_s+L.O.T. =0, 
\end{equation}
where $L.O.T$ consists of derivatives of $\bar{g}_s$ of order $\leq 2N-2$. Hence
$$
L.O.T. \in C^{k-2N+2,\alpha}(X)\cap C^{0,\alpha}(\overline{X}). 
$$
By Lemma \ref{lem.asym1}, on the boundary $\bar{J}_s|M\in C^{k-2,\alpha}(M)$. Hence by global Schauder estimate,
$$
\bar{J}_s\in C^{l,\alpha}(\overline{X}). 
$$
Consider the conformal transformation from $\bar{g}$ to $\bar{g}_s$, where $\bar{g}_s=\phi_s^{\frac{2}{n-s}}\bar{g}$. Then
\begin{equation}\label{eq.2}
-\frac{2n}{n-s}\frac{\bar{\Delta}\phi_s}{\phi_s}-\frac{n(n-3)}{n-s}\frac{|d\phi_s|^2_{\bar{g}}}{\phi_s^2}+\bar{R} =\phi_s^{\frac{2}{n-s}}\bar{R}_s.
\end{equation}
Notice that by the definition of $\phi_s$ and Lemma \ref{lem.reg1}, $c<\phi_s<C$ for some $0<c<C$ only depending on the $C^{k,\alpha}(\overline{X})$ norm of $\bar{g}$.  
Now $k-2\geq l-1$, $0<c<\phi_s<C$, $\phi_s, \bar{R}_s \in  C^{l,\alpha}(\overline{X})$, $|d\phi_s|^2_{\bar{g}}\in C^{l-1,\alpha}(\overline{X})$ and $\bar{R}\in  C^{k-2,\alpha}(\overline{X})$. These  imply that 
$\phi_s\in C^{l+1,\alpha}(\overline{X})$ by the global Schauder estimates. If $k-2\geq l$, using $|d\phi_s|^2_{\bar{g}}\in C^{l,\alpha}(\overline{X})$ again in equation (\ref{eq.2}), we have $\phi_s\in C^{l+2,\alpha}(\overline{X})$. 
Repeat this  lifting  progress and apply the global Schauder estimates  to (\ref{eq.1}) and (\ref{eq.2}), until the regularity lifting is stoped by $C^{k,\alpha}(\overline{X})$ assumption on $\bar{g}$. Finally we get  $\phi_s\in C^{k,\alpha}(\overline{X})$. And the estimates of $\|\phi_s\|_{C^{k,\alpha}(\overline{X})}$ is directly from Lemma \ref{lem.reg1} and the global Schauder estimates for (\ref{eq.1}) and (\ref{eq.2}).
\end{proof}

\begin{remark}\label{rmk.6}
	If $\{g_+^{(i)}\}$ is a set of $C^{k,\alpha}$ conformally compact Poincar\'{e}-Einstein metric such that $\{\bar{g}^{(i)}=\rho^2g_+^{(i)}\}$ is compact in the $C^{k,\alpha}(\overline{X})$  topology and $\mathcal{Y}(M,[\hat{g}^{(i)}])\geq 0$ for all $i$, then Lemma \ref{lem.reg0}, \ref{lem.reg1}, \ref{lem.reg2} hold with uniform estimates, i.e.
	$$
	\|\phi_s^{(i)}\|_{C^{l,\beta}(\overline{X})} \leq C, 
	$$
	where $C>0$  is independent of $i$.
	Here $l+\beta=k+\alpha$ if $k+\alpha<2s-n$ or $2s-n=2N-1$ for some positive integer $N$; $l+\beta<2s-n$ if $k+\alpha\geq 2s-n$ and $2s-n$ is not an odd positive integer. 
\end{remark}

\subsubsection{Case 2: $s=n$}
For $s=n$, denote
$$
\bar{g}_n=\rho_n^2g_+=e^{2\varphi}\bar{g}, \quad \varphi=w-\log\rho.
$$
Then $\varphi$ satisfies
\begin{equation}\label{eq.vphi}
\begin{cases}
\Delta_+\varphi
+\rho\left(\bar{\Delta}\rho +nT\right) =0, & \textrm{in $X$}, 
\\
\varphi=0,  & \textrm{on $M$},
\end{cases}
\end{equation}
where 
$$
\Delta_+\varphi=\rho^2\bar{\Delta}\varphi -(n-1)\rho\langle d\rho, d\varphi\rangle_{\bar{g}}.
$$ 

A similar proof of Lemma \ref{lem.reg0}, \ref{lem.reg1}, \ref{lem.reg2} gives the following.

\begin{lemma}\label{lem.reg5}
	There exists an approximate solution $\tilde{\varphi}\in  C^{k,\alpha}(\overline{X})$ such that
	$$
	\Delta_+\tilde{\varphi}= \tilde{F}, \quad \tilde{F}\in \Lambda^{k-1,\alpha}_{\delta}(X). 
	$$
	where $\delta=k+\alpha$ if $k+\alpha\leq n$; $\delta=n$ if $k+\alpha> n$. 
	Moreover, there exists a constant $C$ only depending on $k,\alpha$ and the $C^{k,\alpha}(\overline{X})$ norm of $\bar{g}$ such that
	$$
	\|\tilde{\varphi}\|_{  C^{k,\alpha}(\overline{X})} \leq C, 
	\quad
	\| \tilde{F}\|_{\Lambda^{k-1,\alpha}_{\delta}(X)}\leq C;
	$$
	and as $\rho\rightarrow 0$, 
	$$
	\tilde{\varphi}=1+\rho \varphi_{1}+\cdots +\rho^l  \varphi_{l},
	$$
	where $l=k$  if $k+\alpha\leq n$; $l=n-1$ if $k+\alpha>n$. 
	Here $\varphi_{j}\in C^{k-j,\alpha}(M)$ is determined by the derivatives of $\bar{g}$ up to  $j$-th order and
	$$
	\|\varphi_{j}\|_{ C^{k-j,\alpha}(M)}\leq C. 
	$$
\end{lemma}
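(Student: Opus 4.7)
The plan is to mirror the construction in the proof of Lemma \ref{lem.reg0}, with the operator $P$ (whose indicial roots at the boundary are $0$ and $2s-n$) replaced by $\Delta_+$ (whose indicial roots are $0$ and $n$). Because the boundary condition is $\varphi|_M=0$, the ansatz starts at order $\rho$:
\[
\tilde\varphi \;=\; \tilde\varphi_1 + \tilde\varphi_2 + \cdots + \tilde\varphi_l,
\]
where each $\tilde\varphi_j \in C^{k,\alpha}(\overline{X})$ is an extension of $\rho^j\varphi_j$ provided by Theorem \ref{thm.ext}, and the boundary coefficients $\varphi_j \in C^{k-j,\alpha}(M)$ are to be determined recursively so as to cancel the expansion of the source $-\rho(\bar{\Delta}\rho+nT)$ order by order, making the residual of equation (\ref{eq.vphi}) as flat as possible at $M$.

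The first key step is the indicial computation. Writing $\Delta_+=\rho^2\bar{\Delta}-(n-1)\rho\langle d\rho,d\cdot\rangle_{\bar g}$ and using $|d\rho|^2_{\bar g}|_M=1$, a direct expansion yields
\[
\Delta_+ \tilde\varphi_j \;=\; j(j-n)\,\rho^j\varphi_j \;+\; O(\rho^{j+\epsilon}),
\]
with $\epsilon=1$ if $j<k$ and $\epsilon=\alpha$ if $j=k$, and with the error controlled in $\rho\cdot C^{k-1,\alpha}(\overline{X})$ by $C\|\varphi_j\|_{C^{k-j,\alpha}(M)}$ via Theorem \ref{thm.ext}(3). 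Since $\bar{\Delta}\rho,T \in C^{k-1,\alpha}(\overline{X})$, the source $-\rho(\bar{\Delta}\rho+nT)$ has a well-defined expansion in powers of $\rho$ whose coefficients are controlled by $\|\bar g\|_{C^{k,\alpha}(\overline{X})}$. Matching at the $\rho^1$ level gives $\varphi_1=\tfrac{1}{n-1}(\bar{\Delta}\rho+nT)|_M$, and inductively, at order $\rho^j$ with $j<n$, we set $\varphi_j=\tfrac{1}{j(j-n)}R_j|_M$, where $R_j\in C^{k-j,\alpha}(\overline{X})$ is the residual from the previous steps, polynomial in derivatives of $\bar g$ of order at most $j$ and bounded in terms of $\|\bar g\|_{C^{k,\alpha}(\overline{X})}$.

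Stopping criterion and main obstacle: when $k+\alpha\le n$, every iteration index $1\le j\le k$ satisfies $j<n$, so the indicial factor $j(j-n)$ never vanishes; we reach $l=k$ and the residual lies in $\rho\cdot C^{k-1,\alpha}_{(k-1+\alpha)}(\overline{X})\subset \Lambda^{k-1,\alpha}_{k+\alpha}(X)$ by Lemmas \ref{lem.FS3} and \ref{lem.FS2}. When $k+\alpha>n$ the index $j=n$ is an exact indicial resonance, and pushing the formal expansion past this order would require logarithmic terms (compare Remark \ref{rmk.4} for the analogous non-integer case); to stay within the pure-power ansatz we truncate at $l=n-1$, so the residual lies in $\Lambda^{k-1,\alpha}_n(X)$. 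This resonance at $j=n$ is the main obstacle: for $s=n$ the critical exponent $2s-n$ always coincides with the positive integer $n$, so the loss of sharpness in $\delta$ once $k+\alpha$ exceeds $n$ is intrinsic. The uniform $C^{k,\alpha}(\overline{X})$ bound on $\tilde\varphi$ and the $\Lambda^{k-1,\alpha}_\delta(X)$ bound on the residual $\tilde F$ then follow by summing the extension estimates from Theorem \ref{thm.ext} and applying continuity of the inclusions in Lemma \ref{lem.FS2}, with constants depending only on $k$, $\alpha$ and $\|\bar g\|_{C^{k,\alpha}(\overline{X})}$.
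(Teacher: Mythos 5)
Your proposal is correct and follows essentially the same route the paper intends: the paper proves Lemma \ref{lem.reg5} by the "similar proof" to Lemma \ref{lem.reg0}, i.e., an order-by-order expansion using Theorem \ref{thm.ext}, with the indicial roots of $\Delta_+$ now $0$ and $n$, which is exactly what you carry out (including the correct truncation at $l=n-1$ at the resonance $j=n$ when $k+\alpha>n$). Your observation that the ansatz starts at order $\rho$ because $\varphi|_M=0$ is the right reading of the statement.
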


\begin{lemma}\label{lem.reg3}
Let $l+\beta=k+\alpha$ if $k+\alpha<n$ and $l+\beta<n$ if $k+\alpha\geq  n$. 
Then $\varphi\in C^{l,\beta}(\overline{X})$,  and there exists some constant $C>0$ only depending on the $C^{k,\alpha}(\overline{X})$ norm of $\bar{g}$ such that 
$$
\|\varphi\|_{C^{l,\beta}(\overline{X})}\leq C. 
$$
\end{lemma}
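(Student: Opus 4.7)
The plan is to adapt the strategy of Lemma \ref{lem.reg1} to the Fefferman--Graham case $s=n$. Since $s(n-s)=0$ here, the operator $P_0$ reduces to $\Delta_+$, whose boundary indicial roots are $0$ and $n$. The hypothesis $\mathcal{Y}(M,[\hat{g}])\geq 0$ together with Lee's spectral bound $\mathrm{Spec}(-\Delta_+)\geq n^2/4>0$ activates Lemma \ref{lem.FH2} with $\lambda=0$, so that
$$
\Delta_+\colon \Lambda^{k+1,\alpha}_{\delta'}(X) \longrightarrow \Lambda^{k-1,\alpha}_{\delta'}(X)
$$
is an isomorphism for every $\delta'\in (0,n)$, and the norm of its inverse depends only on $\|\bar{g}\|_{C^{k,\alpha}(\overline{X})}$. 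I then invoke the approximate solution $\tilde{\varphi}\in C^{k,\alpha}(\overline{X})$ from Lemma \ref{lem.reg5}: the residual $\tilde{F}$ that it generates in (\ref{eq.vphi}) lies in $\Lambda^{k-1,\alpha}_\delta(X)$ with $\delta=k+\alpha$ if $k+\alpha\leq n$ and $\delta=n$ if $k+\alpha>n$. Setting $\psi=\varphi-\tilde{\varphi}$ reduces (\ref{eq.vphi}) to
$$
\Delta_+\psi=-\tilde{F}\quad\text{in } X,\qquad \psi|_M=0.
$$

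If $k+\alpha<n$, I take $\delta'=k+\alpha\in (0,n)$; the invertibility above furnishes a unique $\psi\in\Lambda^{k+1,\alpha}_{k+\alpha}(X)$, and the continuous inclusions $\Lambda^{k+1,\alpha}_{k+\alpha}(X)\hookrightarrow \Lambda^{k,\alpha}_{k+\alpha}(X)\hookrightarrow C^{k,\alpha}(\overline{X})$ from Lemma \ref{lem.FS2} yield $\varphi=\tilde{\varphi}+\psi\in C^{k,\alpha}(\overline{X})$ with the asserted norm bound. If $k+\alpha\geq n$, I fix any pair $(l,\beta)$ with $l+\beta<n$; since $\tilde{F}\in \Lambda^{k-1,\alpha}_n(X)\hookrightarrow \Lambda^{k-1,\alpha}_{l+\beta}(X)$, applying the isomorphism at weight $\delta'=l+\beta\in(0,n)$ gives $\psi\in\Lambda^{k+1,\alpha}_{l+\beta}(X)\subset\Lambda^{l,\beta}_{l+\beta}(X)\hookrightarrow C^{l,\beta}(\overline{X})$, again with a norm controlled only by $\|\bar{g}\|_{C^{k,\alpha}(\overline{X})}$.

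The main obstacle is the obstruction at the indicial root $\delta=n$: Lemma \ref{lem.FH2} requires the strict inclusion $\delta'<n$, so once the background regularity satisfies $k+\alpha\geq n$ we cannot push the weight of the corrector $\psi$ all the way up to $n$, and a strict loss of regularity is forced. This is the direct Fefferman--Graham analog of the obstruction at $\delta=2s-n$ encountered in Lemma \ref{lem.reg1}, and unlike the odd-dimensional adapted case there is no Q-curvature identity of the form exploited in Lemma \ref{lem.reg2} that would let us bootstrap back to $C^{k,\alpha}(\overline{X})$ in general, which is precisely why the conclusion only delivers regularity strictly below the threshold $n$.
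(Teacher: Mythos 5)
Your argument is correct and is essentially the paper's own: the paper treats Lemma \ref{lem.reg3} precisely as ``a similar proof of Lemma \ref{lem.reg0}, \ref{lem.reg1}, \ref{lem.reg2}'', i.e.\ correcting the approximate solution of Lemma \ref{lem.reg5} via the invertibility of $\Delta_+$ (Lemma \ref{lem.FH2} with $\lambda=0$, valid since $\mathrm{Spec}(-\Delta_+)\geq n^2/4>0$) on weighted spaces with weight in the indicial range $(0,n)$, and then concluding through the embedding $\Lambda^{k+1,\alpha}_{l+\beta}(X)\hookrightarrow C^{l,\beta}(\overline{X})$. Only your closing aside needs tempering: for odd $n$ the identity $Q^{\bar{g}_F}_{n+1}=0$ does allow the bootstrap back to $C^{k,\alpha}(\overline{X})$ (that is Lemma \ref{lem.reg4}), but this does not affect your proof of the present statement.
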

\begin{lemma}\label{lem.reg4}
If $n$ is odd, then $\varphi \in C^{k,\alpha}(\overline{X})$ and there exists some constant $C>0$ only depending on the $C^{k,\alpha}(\overline{X})$ norm of $\bar{g}$ such that 
$$
\|\varphi\|_{C^{k,\alpha}(\overline{X})}\leq C. 
$$
\end{lemma}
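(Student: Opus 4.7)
The plan is to mimic the regularity-lifting argument of Lemma \ref{lem.reg2}, using as the key input the vanishing $\bar{Q}^{\bar{g}_n}_{n+1}=0$ provided by Lemma \ref{lem.Q}: since $n$ is odd, we may write $n=2N-1$ for the integer $N=(n+1)/2$, so the Q-curvature identity holds in the Fefferman-Graham case and will play the same role as $\bar{Q}^{\bar{g}_s}_{2N}=0$ did in Lemma \ref{lem.reg2}.

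First, if $k+\alpha<n$ the conclusion is already contained in Lemma \ref{lem.reg3}, so assume henceforth $k+\alpha\geq n$. Fixing $l=n-1$ and some $\beta\in(0,1)$ with $l+\beta<n$, Lemma \ref{lem.reg3} supplies $\varphi\in C^{l,\beta}(\overline{X})$ with norm bounded by $\|\bar{g}\|_{C^{k,\alpha}(\overline{X})}$; since $\bar{g}_n=e^{2\varphi}\bar{g}$, this quantitatively transfers to $\bar{g}_n\in C^{l,\beta}(\overline{X})$.

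Next, I would rewrite $\bar{Q}^{\bar{g}_n}_{n+1}=0$ as an elliptic equation for $\bar{J}_n$ of order $2(N-1)=n-1$ whose lower-order coefficients are polynomial in derivatives of $\bar{g}_n$ up to order $n-1$. Combined with an analogue of Lemma \ref{lem.asym1} for $s=n$ that identifies the boundary value $\bar{J}_n|_M\in C^{k-2,\alpha}(M)$, the global Schauder estimates lift $\bar{J}_n$ to $C^{l,\beta}(\overline{X})$. The conformal transformation of scalar curvature then yields a semilinear elliptic equation for $\varphi$,
\begin{equation*}
2n\bar{\Delta}\varphi = \bar{R} - n(n-1)|d\varphi|_{\bar{g}}^2 - e^{2\varphi}\bar{R}_n,
\end{equation*}
with boundary data $\varphi|_M=0$, from which global Schauder estimates lift $\varphi$ by one derivative. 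Iterating this bootstrap, alternating between improved regularity of $\bar{J}_n$ via the Q-curvature equation and improved regularity of $\varphi$ via the conformal transformation equation, the regularity climbs until it is capped by the assumption $\bar{g}\in C^{k,\alpha}(\overline{X})$, yielding $\varphi\in C^{k,\alpha}(\overline{X})$ together with the claimed norm bound, exactly as at the end of Lemma \ref{lem.reg2}.

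The main obstacle is crossing the borderline exponent $l+\beta=n$, where the weighted Fredholm theory of Lemma \ref{lem.FH2} degenerates because the weight sits at the edge of the admissible indicial gap; this is the reason Lemma \ref{lem.reg3} alone cannot reach $C^{k,\alpha}(\overline{X})$ when $k+\alpha\geq n$. The vanishing $\bar{Q}^{\bar{g}_n}_{n+1}=0$ is precisely the geometric structure that bypasses this obstruction, since it provides a genuinely elliptic equation across the boundary rather than a degenerate one, allowing the bootstrap to push past the critical order $n$ just as in Lemma \ref{lem.reg2}.
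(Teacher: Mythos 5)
Your proposal is correct and follows essentially the same route the paper intends: the authors give no separate argument for this lemma but state that it follows by the same proof as Lemmas \ref{lem.reg0}, \ref{lem.reg1}, \ref{lem.reg2}, which is exactly what you carry out — starting from Lemma \ref{lem.reg3}, using $\bar{Q}^{\bar{g}_n}_{n+1}=0$ from Lemma \ref{lem.Q} (valid since $n$ odd makes $N=(n+1)/2$ an integer) together with the boundary value of $\bar{J}_n$ from Lemma \ref{lem.asym1}, and bootstrapping via the conformal transformation law for the scalar curvature until the regularity is capped at $C^{k,\alpha}(\overline{X})$.
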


\begin{remark}\label{rmk.7}
	If $\{g_+^{(i)}\}$ is a set of $C^{k,\alpha}$ conformally compact Poincar\'{e}-Einstein metric such that $\{\bar{g}^{(i)}=\rho^2g_+^{(i)}\}$ is compact in the $C^{k,\alpha}(\overline{X})$  topology and $\mathcal{Y}(M,[\hat{g}^{(i)}])\geq 0$ for all $i$, then Lemma \ref{lem.reg5}, \ref{lem.reg3}, \ref{lem.reg4} hold with uniform estimates, i.e.
	$$
	\|\varphi^{(i)}\|_{C^{l,\beta}(\overline{X})} \leq C,
	$$
	where $C>0$  is independent of $i$. 
	Here $l+\beta=k+\alpha$ if $k+\alpha<n$ or $n$ is odd;  $l+\beta<n$ if $k+\alpha\geq n$ and $n$ is even. 
\end{remark}



\vspace{0.2in}
\section{Geometric Positivity Results for $\bar{g}_s$.}\label{sec.5}

In this section we investigate some more geometric features for the adapted and Fefferman-Graham compactifications. Suppose $(X^{n+1}, g_+)$ ($n\geq 3$) is a $C^{k,\alpha}$ conformally compact Poincar\'{e}-Einstein manifold with $k\geq 3$, $0<\alpha<1$. For $s>\frac{n}{2}$, $\bar{g}_s$ is the adapted (including Fefferman-Graham) compactification defined in last section. Denote 
$$
\bar{g}_s=\rho_s^2g_+
\quad\textrm{and}\quad
\bar{g}_s|_{TM}=\hat{g}. 
$$
Let $H_s$ be the boundary mean curvature of $\bar{g}_s$ if it is well defined. 
Let $\hat{R}$ be the scalar curvature of $\hat{g}$ and  $\hat{J}=\frac{1}{2n}\hat{R}$.

\subsection{Boundary Asymptotics}
First recall the geodesic normal defining function $x$  from Lemma \ref{lem.GDef}. Notice that $x\in C^{k-1,\alpha}(\overline{X})$ is determined by a given boundary representative $\hat{g}$. Then
$$
\tilde{g}=x^2g_+
\quad\textrm{and}\quad 
\tilde{g}|_{TM}=\bar{g}|_{TM}=\bar{g}_s|_{TM}=\hat{g}.
$$ 
Recall that
$$
\bar{J}_s =\frac{2s-n-1}{2} \left(\frac{1-|d\rho_s|^2_{\bar{g}_s}}{\rho_s^2}\right),
\quad
\bar{T}_s =\frac{1-|d\rho_s|^2_{\bar{g}_s}}{\rho_s}.
$$

\begin{lemma}\label{lem.asym1}
For $s>\frac{n}{2}+1$, we have
$$
\rho_s=x\left( 1-\frac{1}{2(2s-n-2)}\hat{J} x^2 + O(x^{2+\epsilon})\right), 
$$
and
$$
\bar{J}_s|_{M}=\frac{2s-n-1}{2s-n-2}\hat{J}.
$$

\end{lemma}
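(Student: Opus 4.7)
The plan is to carry out a boundary asymptotic expansion for $v_s$ (respectively $w$ for $s=n$) by replacing the fixed smooth defining function $\rho$ with the geodesic normal defining function $x$ from Lemma \ref{lem.GDef}. This is legitimate since, by Lemma \ref{lem.def1} (and Lemma \ref{lem.def2} for $s=n$), $\rho_s$ is determined purely by the boundary representative $\hat g$, so it is independent of the choice of background defining function. In the collar $\tilde g = x^2 g_+$ takes the normal form $\tilde g = dx^2 + \hat g + x^2 g_2 + O(x^{2+\epsilon})$ with $g_2 = -\hat A$ (Lemma \ref{lem.GDef2}), and I will work entirely in this gauge.

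In this gauge $\Delta_+ = x^2 \tilde\Delta - (n-1)x\partial_x$ with $\tilde\Delta = \partial_x^2 + H(x,y)\partial_x + \Delta_{h(x)}$, where $H = \partial_x \log\sqrt{\det h}$. Since $\mathrm{tr}_{\hat g}\hat A = \hat J$ (a direct computation from the definition of $\hat A$ and $\hat J = \hat R/[2(n-1)]$), one gets $H = -x\hat J + O(x^{1+\epsilon})$. A routine calculation then yields, for $a\ge 0$ and $f\in C^{k-1,\alpha}(M)$,
\begin{equation*}
\bigl(-\Delta_+ - s(n-s)\bigr)(x^a f) = \bigl[s(n-s) - a(a-n)\bigr] x^a f + x^{a+2}\bigl(a\hat J f - \Delta_{\hat g} f\bigr) + O(x^{a+2+\epsilon}),
\end{equation*}
exhibiting $a=n-s$ as an indicial root. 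Plugging the ansatz $v_s = x^{n-s}\bigl(1 + b_1(y)x + b_2(y)x^2 + \cdots\bigr)$ and matching coefficients of $x^{n-s+j}$ gives the recurrence $j(2s-n-j)\,b_j = (\text{lower-order forcing})$. Since $s > \tfrac{n}{2}+1$, both $2s-n-1$ and $2s-n-2$ are nonzero, so at $j=1,2$ one solves uniquely $b_1 = 0$ and $b_2 = -(n-s)\hat J/[2(2s-n-2)]$. By combining the approximate-solution construction of Lemma \ref{lem.reg0} (applied with $x$ as the background defining function) with the weighted-space invertibility in Lemma \ref{lem.FH2} for weight $n-s+2 \in (n-s, s)$, this truncated expansion agrees with the actual $v_s$ modulo an error in $\Lambda^{l,\beta}_{n-s+2+\epsilon}$ for some $\epsilon>0$.

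Taking the $(n-s)$-th root gives $\rho_s = x\bigl(1 + \tfrac{b_2}{n-s} x^2 + O(x^{2+\epsilon})\bigr) = x\bigl(1 - \tfrac{\hat J}{2(2s-n-2)} x^2 + O(x^{2+\epsilon})\bigr)$, which is the first formula. The Fefferman--Graham case $s=n$ is handled identically using $-\Delta_+ w = n$ with $w = \log x + w_2 x^2 + O(x^{2+\epsilon})$; the same indicial analysis yields $w_2 = -\hat J/[2(n-2)]$ and $\rho_n = e^w$ gives the consistent result. For $\bar J_s|_M$, set $A = -\hat J/[2(2s-n-2)]$ so that $\rho_s = x + Ax^3 + O(x^{3+\epsilon})$. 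The conformal relation $\bar g_s = (\rho_s/x)^2 \tilde g$ gives $|d\rho_s|^2_{\bar g_s} = (x/\rho_s)^2 |d\rho_s|^2_{\tilde g}$; in the collar $\tilde g = dx^2 + h$ one computes $|d\rho_s|^2_{\tilde g} = (\partial_x \rho_s)^2 + |d_y\rho_s|^2_h = 1 + 6Ax^2 + O(x^{2+\epsilon})$ (the tangential part is $O(x^6)$), while $(x/\rho_s)^2 = 1 - 2Ax^2 + O(x^{2+\epsilon})$. Multiplying gives $1 - |d\rho_s|^2_{\bar g_s} = -4Ax^2 + O(x^{2+\epsilon})$, and hence $\bar J_s|_M = \tfrac{2s-n-1}{2}\cdot(-4A) = \tfrac{(2s-n-1)\hat J}{2s-n-2}$.

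The main obstacle is essentially bookkeeping: one needs the finite regularity $\tilde g \in C^{k-1,\alpha}$ (which, for $k\ge 3$, gives at least $C^{2,\alpha}$) to be enough to extract two orders past the leading term, and one needs the weighted invertibility of $\Delta_+ + s(n-s)$ at the weight $n-s+2$ to lie in the indicial gap $(n-s, s)$. The latter is exactly the reason for the hypothesis $s > \tfrac{n}{2}+1$: for $s = \tfrac{n+1}{2}$ the indicial denominator $2s-n-2$ would vanish and the lemma would fail, while for $s = \tfrac{n}{2}+1$ it reduces to the critical case on the boundary of the gap.
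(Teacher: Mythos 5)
Your proposal follows essentially the same route as the paper: pass to the geodesic normal gauge of Lemma \ref{lem.GDef}--\ref{lem.GDef2}, compute the indicial operator of $-\Delta_+-s(n-s)$ acting on $x^a f$, solve the first two orders of the expansion of $v_s$ (getting $b_1=0$, $b_2=-\tfrac{n-s}{2(2s-n-2)}\hat J$, exactly the paper's $f_0$), and remove the error by the weighted invertibility of Lemma \ref{lem.FH1}/\ref{lem.FH2} at a weight near $n-s+2$, which lies in the gap $(n-s,s)$ precisely because $s>\tfrac n2+1$; your explicit computation of $\bar J_s|_M$ from $\rho_s=x+Ax^3+O(x^{3+\epsilon})$ is correct and in fact fills in a step the paper leaves implicit. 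Two small slips, neither fatal: the displayed coefficient should be $(s-a)(a+s-n)=a(n-a)-s(n-s)$, not $s(n-s)-a(a-n)$, which as written does not vanish at $a=n-s$ (the recurrence factor $j(2s-n-j)$ you actually use is the correct one, so nothing downstream is affected); and in your closing remark the two critical values are swapped --- $2s-n-2$ vanishes at $s=\tfrac n2+1$ (equivalently, $n-s+2$ hits the edge of the indicial gap), whereas at $s=\tfrac{n+1}{2}$ it is $2s-n-1$ that vanishes, which is what produces the mean-curvature term in Lemma \ref{lem.asym2}.
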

\begin{proof}
For $s>\frac{n}{2}+1,s\neq n$, we work on the equation (\ref{eq.def1}). By Lemma \ref{lem.GDef2} and the calculation in its proof, we have
$$
-\Delta_+ x^{n-s} -s(n-s)x^{n-s}=-x^{n-s+1}[(n-s)\tilde{\Delta}x+s(n-s)\tilde{T}], 
$$
where $$\tilde{T}=\frac{1-|dx|^2_{\tilde{g}}}{x}.$$
By the definition of $x$, $\tilde{T}\equiv 0$ in a neighbourhood of $M$ and hence for some $c>0$ small enough,
$$
-x^{n-s+1}[(n-s)\tilde{\Delta}x+s(n-s)\tilde{T}]=x^{n-s+2}\left(\frac{n-s}{2n}\tilde{R}\right), \quad \textrm{in $X_c$}. 
$$
Here $\tilde{R}\in C^{k-3,\alpha}(\overline{X})$. 
Notice that
$$
\begin{aligned}
&\ \left[-\Delta_+ -s(n-s)\right](x^{r}f) 
\\
=&\  x^r\left[(r-s)(r+s-n)f -r(r-n)(1-|dx|^2_{\tilde{g}})f +rx\tilde{\Delta}x f+(2r-n+1)x\langle dx,df\rangle_{\tilde{g}} +x^2\tilde{\Delta}f 
\right].
\end{aligned}
$$
Take $r=n-s+2$ and $f\in C^{k-2,\alpha}(\overline{X})$ be an extension of 
$$
f_0=-\frac{n-s}{2(2s-n-2)} \hat{J}\in C^{k-2,\alpha}(M)
$$
according to Theorem \ref{thm.ext}. 
Then
$$
\left[-\Delta_+ -s(n-s)\right](x^{n-s}+x^{n-s+2}f)=x^{n-s+3} F, 
\quad F\in  C^{k-3,\alpha}(\overline{X}). 
$$
By Lemma \ref{lem.FH1}, there exists $v'\in \Lambda_{0}^{k-1,\alpha}(X)$ such that 
$$
\left[-\Delta_+ -s(n-s)\right](x^{n-s}+x^{n-s+2}f+x^{n-s+2+\epsilon}v')=0,
$$
So $v_s$ has asymptotics 
$$
v_s=x^{n-s}\left(1-\frac{n-s}{2(2s-n-2)}\hat{J} x^{2} + O(x^{2+\epsilon})\right)
$$	
which implies the asymptotics of $\rho_s$. 


If $s=n$, we work on the equation (\ref{eq.def2}). Then similar proof as above gives the asymptotical expansion of $\rho_n$, which also can be read from the continuity of $\rho_s$ in parameter $s$.
\end{proof}


\begin{lemma}\label{lem.asym2}
For $s=\frac{n+1}{2}$,  we have
$$
\rho_s=x\left(1-\frac{1}{n}H_sx+\frac{1}{2}\hat{J}x^2+O(x^{2+\epsilon})\right), 
$$	
and
$$
\bar{T}_s|_{M}=\frac{2}{n}H_s. 
$$
\end{lemma}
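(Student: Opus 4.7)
The approach mirrors the proof of Lemma \ref{lem.asym1} but accounts for the special indicial structure at $s = \frac{n+1}{2}$: here $2s-n = 1$, so the two indicial roots $n-s = \frac{n-1}{2}$ and $s = \frac{n+1}{2}$ differ by exactly $1$. Consequently, in the asymptotic expansion of $v_s$ a new ``free'' coefficient appears already one order above the leading term $x^{(n-1)/2}$, generating the linear-in-$x$ correction in $\rho_s/x$ whose coefficient encodes the boundary mean curvature $H_s$.

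The first step is to work in the geodesic normal coordinate $x$ from Lemma \ref{lem.GDef}, where $\tilde{g} = x^2 g_+ = dx^2 + \hat{g} + x^2 g_2 + O(x^{2+\epsilon})$ with $g_2 = -\hat{A}$. Using $\Delta_{g_+} = x^2 \Delta_{\tilde{g}} - (n-1)x\langle dx, d\cdot\rangle_{\tilde{g}}$ in these coordinates and writing $v_s = x^{(n-1)/2}f$ with $f = 1 + c_1 x + c_2 x^2 + O(x^{2+\epsilon})$, I would substitute into $(-\Delta_{g_+}-s(n-s))v_s = 0$ and extract coefficients order by order. The leading indicial equation is automatically satisfied by $\gamma_0(\gamma_0-n) = -s(n-s)$ for $\gamma_0 = \frac{n-1}{2}$; at the next order, the coefficient multiplying $f_x$ is $2\gamma_0 - n + 1 = 0$, so $c_1$ is unconstrained (the free scattering datum); at the following order one finds $c_2 = \frac{n-1}{4}\hat{J}$ by using $\operatorname{tr}_{\hat{g}}\hat{A} = \hat{J}$ and $\partial_x \ln\sqrt{|h_x|}|_{x=0}= 0$. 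Taking the power $\rho_s = v_s^{2/(n-1)}$ yields $\rho_s = x(1 + \tfrac{2c_1}{n-1}x + \tfrac{1}{2}\hat{J}x^2 + O(x^{2+\epsilon}))$, so proving the first formula reduces to identifying $\tfrac{2c_1}{n-1}$ with $-\tfrac{H_s}{n}$.

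To make this identification and prove the second formula $\bar{T}_s|_M = \frac{2}{n}H_s$, I compute $\bar{T}_s = (1 - |d\rho_s|^2_{\bar{g}_s})/\rho_s$ using $\bar{g}_s = (\rho_s/x)^2\tilde{g}$. Setting $a := \tfrac{2c_1}{n-1}$ and passing to coordinates $(\rho_s, y)$ near $M$, the metric takes the form $\bar{g}_s = (1 - 2a\rho_s + O(\rho_s^2))d\rho_s^2 + (1+2a\rho_s + O(\rho_s^2))\hat{g} + O(\rho_s^2\, dy^2)$, from which $|d\rho_s|^2_{\bar{g}_s} = \bar{g}_s^{\rho_s\rho_s} = 1 + 2a\rho_s + O(\rho_s^2)$ and thus $\bar{T}_s|_M = -2a$. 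Passing to a true geodesic normal defining function $\tilde{\rho} = \rho_s - \frac{a}{2}\rho_s^2 + O(\rho_s^3)$ for $\bar{g}_s$ brings $\bar{g}_s$ to the form $d\tilde{\rho}^2 + (1+2a\tilde{\rho} + O(\tilde{\rho}^2))\hat{g}$, so the second fundamental form is $\Pi_s = -a\hat{g}$; combining with the paper's convention $\Pi = \frac{H}{n}\hat{g}$ gives $a = -\frac{H_s}{n}$, which simultaneously yields $\bar{T}_s|_M = \frac{2H_s}{n}$ and the claimed expansion of $\rho_s$.

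The main obstacle is the geometric identification of the purely analytic scattering coefficient $c_1$ with the boundary mean curvature $H_s$. This identification is only available because of the umbilicity of $M$ in $(\overline{X}, \bar{g}_s)$ inherent to the Poincar\'{e}--Einstein setting, which forces $\Pi_s$ to be a pure trace; the remaining work is a bookkeeping computation of coefficients through the degenerate equation for $v_s$ and the power transformation $\rho_s = v_s^{2/(n-1)}$.
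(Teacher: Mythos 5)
Your overall route is the paper's: the paper likewise works in the geodesic normal coordinates of Lemmas \ref{lem.GDef}--\ref{lem.GDef2}, writes $\rho_s=x\psi$ with $\psi=1+x\rho_{s,1}+x^2\rho_{s,2}+O(x^{2+\epsilon})$, identifies $H_s=-n\rho_{s,1}$ from $\bar{g}_s=\psi^2\tilde{g}$ together with the total geodesy of $M$ in $(\overline{X},\tilde{g})$, and obtains $\rho_{s,2}$ by substituting $v_s=\rho_s^{\frac{n-1}{2}}$ into $\Delta_+v_s+\frac{n^2-1}{4}v_s=0$ as in Lemma \ref{lem.asym1}; your computation of $\bar{T}_s|_M=-2a$ via $|d\rho_s|^2_{\bar{g}_s}$ is the same identification, carried out in somewhat more detail than the paper gives. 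One prerequisite you should state: to know $\rho_s$ actually admits an expansion with a genuine order-$x$ coefficient to match (rather than just a formal indicial solution), invoke the boundary regularity $\rho_s\in C^{k,\alpha}(\overline{X})$ of Lemma \ref{lem.reg2}, which is where the paper's proof starts.

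There is one concrete slip in your conversion step. From $v_s=x^{\frac{n-1}{2}}(1+c_1x+c_2x^2+\cdots)$ with $c_2=\frac{n-1}{4}\hat{J}$, writing $p=\frac{2}{n-1}$ one gets
$$
\rho_s=v_s^{p}=x\Bigl(1+pc_1x+\bigl(pc_2+\tfrac{p(p-1)}{2}c_1^2\bigr)x^2+O(x^{2+\epsilon})\Bigr),
$$
and the binomial cross-term $\frac{p(p-1)}{2}c_1^2=\frac{3-n}{(n-1)^2}c_1^2=\frac{3-n}{4n^2}H_s^2$ does not vanish unless $n=3$ or $H_s=0$; you dropped it when asserting that the $x^2$-coefficient is exactly $\frac12\hat{J}$. (Independent check: $\bar{R}_s\equiv 0$ for $s=\frac{n+1}{2}$, and expanding the conformal scalar-curvature equation for $\bar{g}_s=\psi^2\tilde{g}$ at $x=0$, using $\tilde{R}|_M=2n\hat{J}$, gives $4\rho_{s,2}+(n-3)\rho_{s,1}^2=2\hat{J}$, i.e. $\rho_{s,2}=\frac12\hat{J}+\frac{3-n}{4n^2}H_s^2$.) This does not affect the two conclusions of the lemma that are used later, namely $H_s=-n\rho_{s,1}$ and $\bar{T}_s|_M=\frac{2}{n}H_s$, both of which you derive correctly; but as a derivation of the displayed quadratic coefficient your argument is complete only when $n=3$ (or $H_s=0$), and for $n>3$ the cross-term must be carried along in the stated expansion as well.
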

\begin{proof}
By Lemma \ref{lem.reg2}, we know  for $s=\frac{n+1}{2}$, $\rho_s\in C^{k,\alpha}(\overline{X})$ and $\bar{g}_s\in C^{k,\alpha}(\overline{X})$. Assume 
$$
\rho_s=x\psi, \quad \psi=1+x \rho_{s,1}+x^2 \rho_{s,2}+O(x^{2+\epsilon}). 
$$
By Lemma \ref{lem.GDef2}, $(M,\hat{g})$  is totally geodesic in $(\overline{X}, \tilde{g})$ and
$$
\bar{g}_s=\psi^2 \tilde{g}=\left(1+x \rho_{s,1}+x^2 \rho_{s,2}+O(x^{2+\epsilon})\right)^2(dx^2+\hat{g}-x^2\hat{A}+O(x^{2+\epsilon})), 
$$
which implies that 
$$
H_s=-n\rho_{s,1}. 
$$
Notice that 
$v_s=\rho_s^{\frac{n-1}{2}}$ satisfies $\Delta_+v_s+\frac{n^2-1}{4}v_s=0$. Then a similar calculation as the proof of Lemma \ref{lem.asym1} gives $\rho_{s,2}$. 

\end{proof}

\subsection{Positivity}
First, we see \cite[Proposition 6.2]{CC1} still holds if the background metric has only finite regularity.  
\begin{lemma}[Case-Chang] \label{lem.J1}
If $\hat{R}> 0$ on $M$, then for all $s>\frac{n}{2}+1$, $\bar{J}_s>0$ all over $\overline{X}$. 
\end{lemma}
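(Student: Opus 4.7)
The plan is to follow Case--Chang \cite[Proposition 6.2]{CC1}: first establish $\bar{J}_s > 0$ on the boundary $M$ via the asymptotic formula in Lemma~\ref{lem.asym1}, then propagate positivity into $X$ by a maximum-principle/Bochner argument driven by the Einstein condition $\mathrm{Ric}(g_+) = -ng_+$.

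For the boundary step, $s > \tfrac{n}{2}+1$ forces $2s-n-1 > 1$ and $2s-n-2 > 0$, so the identity from Lemma~\ref{lem.asym1},
\[
\bar{J}_s\big|_M \;=\; \frac{2s-n-1}{2s-n-2}\,\hat{J},
\]
is strictly positive under $\hat{R} > 0$. By Lemmas~\ref{lem.reg1}--\ref{lem.reg2}, $\bar{g}_s \in C^{l,\beta}(\overline{X})$ with $l + \beta > 2$ (using $2s-n>2$), so $\bar{J}_s$ is continuous on $\overline{X}$ and hence positive on an open collar of $M$.

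For the interior step, combining \eqref{eq.bdf} $\bar{\Delta}_s \rho_s = -s\bar{T}_s$ with \eqref{eq.J} $\bar{J}_s = \tfrac{2s-n-1}{2\rho_s}\bar{T}_s$ yields the key identity
\[
-\bar{\Delta}_s \rho_s \;=\; \frac{2s}{2s-n-1}\,\bar{J}_s\,\rho_s \quad \text{in } X,
\]
with $\rho_s > 0$ in $X$ and coefficient strictly positive. Suppose for contradiction that $\lambda_0 := \min_{\overline{X}} \bar{J}_s \leq 0$, necessarily attained at an interior point $p_*$ (by the boundary step). Then $d\bar{J}_s(p_*) = 0$ and $\bar{\Delta}_s \bar{J}_s(p_*) \geq 0$. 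To derive a contradiction, I would combine the Bochner identity for $\rho_s$ on $(\overline{X},\bar{g}_s)$ with the Einstein-derived decomposition $\bar{\nabla}_s^2 \rho_s = \tfrac{\bar{\Delta}_s \rho_s}{n+1}\bar{g}_s - \tfrac{\rho_s}{n-1}\bar{E}_s$ from \eqref{eq.E}, the splitting $\bar{\mathrm{Ric}}_s = \bar{E}_s + \tfrac{2n}{n+1}\bar{J}_s \bar{g}_s$, the contracted second Bianchi identity $\mathrm{div}_{\bar{g}_s}\bar{E}_s = \tfrac{n(n-1)}{n+1}d\bar{J}_s$, and the key identity above. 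After a tensor computation, these inputs force a pointwise relation at $p_*$ that can hold only if $\bar{J}_s(p_*) = 0$ and $\bar{E}_s(p_*) = 0$. A Hopf-type argument applied on the component of $\{\bar{J}_s > 0\}$ meeting $M$ then rules out this degenerate interior minimum, completing the contradiction.

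The main obstacle is the explicit derivation of the sharp pointwise inequality for $\bar{J}_s$ at a would-be interior minimum. Case-Chang carry out this computation in the formalism of smooth metric measure spaces: introducing the weight $\phi = -(2s-n-1)\log\rho_s$ and the associated Bakry--\'Emery Ricci tensor, the Einstein condition $\mathrm{Ric}(g_+) = -ng_+$ translates into a clean identity exposing the desired positivity. Since only derivatives of $\bar{g}_s$ up to order two enter, the finite interior regularity $\bar{g}_s \in C^{k,\alpha}(X)$ with $k \geq 3$ given by Lemmas~\ref{lem.reg1}--\ref{lem.reg2} is more than sufficient, and the Case-Chang argument extends verbatim to the present setting.
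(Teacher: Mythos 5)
Your boundary step is fine and agrees with the paper (Lemma \ref{lem.asym1} gives $\bar{J}_s|_M=\frac{2s-n-1}{2s-n-2}\hat{J}>0$, and the regularity results make $\bar{J}_s$ continuous up to $M$). The gap is in the interior step: the decisive computation is never carried out, only asserted (``after a tensor computation, these inputs force a pointwise relation at $p_*$\dots''), and the assertion is in fact doubtful. Carrying out exactly the inputs you list, one arrives at the equation the paper displays,
\[
\frac{1}{2s-n-1}\left(\bar{\Delta}_s \bar{J}_s+(3+n-2s)\rho_s^{-1}\langle d\rho_s, d\bar{J}_s\rangle_{\bar{g}_s}\right) =-|\bar{A}_s|^2_{\bar{g}_s} + \frac{n+1}{(2s-n-1)^2} \bar{J}_s^2 ,
\]
and at a hypothetical interior minimum with $\bar{J}_s(p_*)=\lambda_0<0$ the left side is $\ge 0$ while the right side equals $-|\mathring{\bar{A}}_s|^2+\bigl(\frac{n+1}{(2s-n-1)^2}-\frac{1}{n+1}\bigr)\lambda_0^2$, which is perfectly capable of being nonnegative whenever $\frac{n}{2}+1<s<n+1$ (the quadratic term enters with the unfavorable sign). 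So a fixed-$s$ pointwise argument at a strictly negative minimum does not produce a contradiction, and nothing in your outline forces $\bar{J}_s(p_*)=0$, $\bar{E}_s(p_*)=0$; the subsequent ``Hopf-type argument on the component of $\{\bar{J}_s>0\}$ meeting $M$'' is too vague to repair this.

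The paper avoids this obstruction with a continuity method in the parameter $s$: positivity at $s=n+1$ is Lee's theorem \cite{Le1}; the set $\mathcal{E}$ of parameters with $\bar{J}_s>0$ is open by continuity in $s$; and at a limit parameter $s_0$ one already knows $\bar{J}_{s_0}\ge 0$, so an interior zero is a minimum at the value $0$, where the quadratic term is harmless ($\bar{J}_{s_0}^2\le C\bar{J}_{s_0}$) and the strong maximum principle, together with $\bar{J}_{s_0}|_M>0$, gives a contradiction. This anchoring at $s=n+1$ and deformation in $s$ is the missing idea in your proposal; without it (or without an actual derivation of a differently-structured equation, say for $\bar{T}_s$ or $1-|d\rho_s|^2_{\bar{g}_s}$, with a genuinely favorable sign), the interior positivity does not follow. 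Deferring to Case--Chang ``verbatim'' is also not satisfactory here, since verifying their statement under finite regularity is precisely the content of the lemma, and your sketch of the mechanism does not match a correct argument.
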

\begin{proof}
Let $s=\frac{n}{2}+\gamma$. If $s=n+1$, $\bar{J}_{n+1}>0$ in $X$ is proved in \cite{Le1}. 	Let $\mathcal{E} \subset (\frac{n}{2}+1,+\infty)$ be the subset such that for each $s\in \mathcal{E}$ such that $\bar{J}_{s}>0$ in $X$. It is obvious that $\mathcal{E}$ is nonempty and open.  If $s_0\in (\frac{n}{2}+1,+\infty)$ is a limit point of $\mathcal{E}$, then $\bar{J}_{s_0}\geq 0$. 
Recall $\bar{J}_s\in C^{k-1,\alpha}(X)$ and satisfies 
\begin{equation}
\frac{1}{2s-n-1}\left(\bar{\Delta}_s \bar{J}_s+(3+n-2s)\rho_s^{-1}\langle d\rho_s, d\bar{J}_s\rangle_{\bar{g}_s}\right) =-|\bar{A}_s|^2_{\bar{g}_s} + \frac{n+1}{(2s-n-1)^2} \bar{J_s}^2.
\end{equation}
By Lemma \ref{lem.asym1}, $\bar{J}_{s_0}|_M>0$. If $s_0\notin \mathcal{E}$, then there is an interior point $p\in X$ such that $\bar{J}_{s_0}(p)=0$, which contradicts with the strong maximum principle. Hence $s_0\in \mathcal{E}$ and $\mathcal{E}$ is closed. 
\end{proof}


\begin{lemma} \label{lem.T1}
If $\hat{R}>0$ on $M$, then for $s=\frac{n+1}{2}$, $\bar{T}_s>0$ and $\bar{J}_s=0$  all over $\overline{X}$.
\end{lemma}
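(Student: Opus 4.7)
My plan is to first reduce the claim about $\bar J_s$ to a direct application of the $Q$-curvature vanishing, then derive a clean elliptic inequality for $\bar T_s$, and finally pin down the sign via the Gauss/Hopf analysis at the umbilical boundary.

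First I would observe that $s=\frac{n+1}{2}$ is exactly the case $N=1$ in Lemma \ref{lem.Q}, so $\bar Q_{2}^{\bar g_s}=\bar J_s=0$ in the interior $X$. By Lemma \ref{lem.reg2}, $\bar g_s\in C^{k,\alpha}(\overline X)$, so $\bar J_s$ extends continuously to $\overline X$ and $\bar J_s\equiv 0$ on all of $\overline X$. In particular $\bar R_s=2n\bar J_s\equiv 0$.

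Next, with $\bar J_s=0$, equation \eqref{eq.E} rearranges to
$$
\bar\nabla_s^{2}\rho_s = -\tfrac12 \bar T_s\,\bar g_s - \tfrac{\rho_s}{n-1}\mathrm{Ric}_{\bar g_s},
$$
so, squaring and using $\bar R_s=0$,
$$
|\bar\nabla_s^{2}\rho_s|_{\bar g_s}^{2} = \tfrac{n+1}{4}\bar T_s^{2} + \tfrac{\rho_s^{2}}{(n-1)^{2}}|\mathrm{Ric}_{\bar g_s}|^{2}.
$$
The same identity gives $\mathrm{Ric}_{\bar g_s}(d\rho_s,d\rho_s)=\tfrac{n-1}{2}\langle d\bar T_s,d\rho_s\rangle_{\bar g_s}$. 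Applying the Bochner formula to $\rho_s$, using $\bar\Delta_s\rho_s=-\tfrac{n+1}{2}\bar T_s$ from \eqref{eq.bdf}, and the relation $\rho_s\bar T_s=1-|d\rho_s|^{2}_{\bar g_s}$, a straightforward computation collapses to
$$
\bar\Delta_s \bar T_s = -\tfrac{2\rho_s}{(n-1)^{2}}|\mathrm{Ric}_{\bar g_s}|^{2}_{\bar g_s} \le 0 \quad\text{in } X.
$$
So $\bar T_s$ is superharmonic with respect to $\bar\Delta_s$, and the weak minimum principle together with Lemma \ref{lem.asym2} gives $\min_{\overline X}\bar T_s = \min_M \bar T_s = \min_M \tfrac{2 H_s}{n}$.

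It remains to prove $H_s>0$ on $M$. For this I would work with the conformal representation $\bar g_s=\phi_s^{4/(n-1)}\tilde g$, where $\tilde g$ is the geodesic normal compactification of Lemma \ref{lem.GDef}, for which $H_{\tilde g}=0$ by Lemma \ref{lem.GDef2}. The conformal change of mean curvature then gives $H_s=\tfrac{2n}{n-1}\partial_{\nu}\phi_s|_M$ (outward $\nu$ with respect to $\tilde g$), while $\phi_s$ satisfies $\tilde\Delta\phi_s=\tfrac{n-1}{2}\tilde J\,\phi_s$ with $\phi_s|_M=1$ and $\phi_s>0$; since $\tilde J|_M$ is a positive multiple of $\hat R$ by Lemma \ref{lem.asym1}/\ref{lem.GDef2} (and $\tilde J>0$ in a collar of $M$), Hopf's lemma applied in such a collar forces $\partial_\nu\phi_s|_M>0$, i.e.\ $H_s>0$. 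Combined with the previous step and the strong minimum principle for $\bar T_s$, this gives $\bar T_s>0$ everywhere on $\overline X$; the only rigidity case, $\bar T_s\equiv \text{const}$, forces $\mathrm{Ric}_{\bar g_s}\equiv 0$ by the displayed elliptic equation, and then the Gauss equation at $M$ pins the constant to $\bar T_s = \tfrac{2}{\sqrt{n(n-1)}}\sqrt{\hat R}>0$.

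The step I expect to be the main obstacle is the sign determination of $H_s$: the Hopf/Gauss argument at a hypothetical boundary minimizer of $\bar T_s$ only gives the magnitude bound $H_s^2>\tfrac{n\hat R}{n-1}$, not the sign, so one must genuinely exploit the conformal-change identity and a maximum principle for $\phi_s$; if $\tilde J$ fails to be nonnegative globally, the collar Hopf argument needs to be supplemented, likely by a continuity/homotopy argument in $s$ that draws on the $s>\tfrac{n+2}{2}$ positivity already established in Lemma \ref{lem.J1}.
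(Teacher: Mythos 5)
Your derivation of the interior identity $\bar{\Delta}_s\bar{T}_s=-\tfrac{2}{(n-1)^2}\rho_s|\bar{E}_s|^2_{\bar{g}_s}\le 0$ (your $\mathrm{Ric}_{\bar g_s}$ equals $\bar E_s$ since $\bar R_s=0$) via the Hessian formula (\ref{eq.E}) and Bochner is correct and matches the equation the paper uses, and the observation $\bar J_s=0$ is immediate (either from your $Q_2$ argument or simply because the factor $2s-n-1$ in (\ref{eq.J}) vanishes). The reduction of everything to the boundary positivity $H_s>0$ via the weak minimum principle is also the paper's strategy. The gap is exactly where you suspect it: your proof of $H_s>0$ does not go through. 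Writing $\bar g_s=\psi_s^{4/(n-1)}\tilde g$ with $\psi_s=(\rho_s/x)^{(n-1)/2}$ solving $\tilde\Delta\psi_s=\tfrac{n-1}{2}\tilde J\psi_s$, Hopf's lemma at a point of $M$ requires knowing beforehand that $\psi_s<1$ in the interior near that point (equivalently $\rho_s<x$ there). The collar maximum principle cannot supply this: on the collar $\{0<x<c\}$ the comparison involves the inner boundary $\{x=c\}$, where you have no bound $\psi_s\le 1$, and the positivity $\tilde J>0$ is only known near $M$ (it comes from $\tilde J|_M=\hat J$), so no global maximum principle is available either. Your proposed fallback, a ``continuity/homotopy in $s$,'' is not formulated: for $s>\tfrac{n}{2}+1$ the expansion of $\rho_s$ has no linear term, so there is no family of mean-curvature statements to continue.

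What the paper actually does at this step is a global comparison with the $s=n+1$ compactification, which is where Lemma \ref{lem.J1} (hence $\hat R>0$) genuinely enters: since $\bar J_{n+1}>0$, the function $v'=\rho_{n+1}^{(n-1)/2}$ is a strict supersolution of $\Delta_++\tfrac{n^2-1}{4}$, namely $\tfrac{\Delta_+v'}{v'}+\tfrac{n^2-1}{4}=-\tfrac{n-1}{2}\rho_{n+1}^2\bar J_{n+1}<0$, and the maximum principle applied to the equation satisfied by $v_s/v'$ (with boundary value $1$) yields $v_s<v'$, i.e.\ $\rho_s<\rho_{n+1}$ in $X$. Comparing the expansions of Lemma \ref{lem.asym1} (no linear term for $\rho_{n+1}$, and an $x^2$-coefficient $-\tfrac{\hat J}{2n}$) with that of Lemma \ref{lem.asym2} ($\rho_s=x(1-\tfrac1n H_sx+\tfrac12\hat Jx^2+\cdots)$) then forces $H_s>0$ pointwise on $M$; your Hopf-in-a-collar step should be replaced by this comparison (or an equivalent global barrier), after which your minimum-principle conclusion for $\bar T_s$ is fine and the extra rigidity discussion at the end is unnecessary.
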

\begin{proof}
First, we can apply the same comparison argument as the proof of \cite{GQ1} show that if $\hat{R}\geq 0$ on $M$, then $H_s>0$ on $M$. Let $v'=(\rho_{n+1})^{\frac{n-1}{2}}=(v_{n+1})^{-\frac{n-1}{2}}$. Then by Lemma \ref{lem.asym1}, 
$$
v'=x^{\frac{n-1}{2}}\left(x-\frac{n-1}{4n}\hat{J}x^3+O(x^{3+\epsilon})\right). 
$$
Moreover, by Lemma \ref{lem.asym1}, 
$$
\frac{\Delta_+v'}{v'}+\frac{n^2-1}{4}= -\frac{n-1}{2}\rho_{n+1}^2\bar{J}_{n+1}<0.
$$
Apply the maximum principle to the following equation,
$$
\Delta_+\left(\frac{v_s}{v'}\right)=-\left(\frac{\Delta_+v'}{v'}+\frac{n^2-1}{4}\right) + 2\left\langle d \left(\frac{v_s}{v'}\right), d\log v' \right\rangle_{g_+}, \quad \left.\frac{v_s}{v'}\right|_M=1
$$
we get $v_s<v'$ and hence $\rho_s<\rho_{n+1}$ in $X$. Therefore, $\bar{H}_s>0$ on M.

Second, $\bar{T}_s\in C^{k-1,\alpha}(\overline{X})$ and direct calculation shows that $\bar{T}_s$ satisfies the equation for $x=\frac{n+1}{2}$,
$$
\bar{\Delta}_s\bar{T}_s = -\frac{2}{(n-1)^2}\rho_s |\bar{E}_s|^2_{\bar{g}_s}. 
$$
By the maximum principle again, we have $\bar{T}_s>0$ in $X$. 
\end{proof}

\vspace{0.2in}
\section{Proof of Theorem \ref{thm.main}}\label{sec.6}

In this section, we prove Theorem \ref{thm.main}.
Suppose $\{(X,g^{(i)}_+)\}$ is a family of $C^{k,\alpha}$ ($k\geq 3, 0<\alpha<1$)  conformally  compact Poincar\'{e}-Einstein manifold with conformal infinity $\{(M,[\hat{g}^{(i)}])\}$ satisifying $\mathcal{Y}(M,[\hat{g}^{(i)}])\geq 0$. Let $\rho$ be a fixed smooth defining function and 
$$
\bar{g}^{(i)}=\rho^2g^{(i)}_+, \quad \hat{g}^{(i)}=\bar{g}^{(i)}|_{TM}. 
$$
For $s>\frac{n}{2}$, let $\{\bar{g}^{(i)}_s\}$ be the adapted (including Fefferman-Graham) compactification of $g^{(i)}_+$ with fixed boundary representatives: 
$$
\bar{g}_s^{(i)}|_{TM}=\bar{g}^{(i)}|_{TM}=\hat{g}^{(i)}.
$$


\subsection{Proof of Theorem \ref{thm.main}-(a)}
Assume $\{\bar{g}^{(i)}\}$ is compact in $C^{k,\alpha}(\overline{X})$ topology.

\subsubsection{Case 1: $s>\frac{n}{2}, s\neq n$} In this case
$$
\bar{g}_s^{(i)}=\left(\rho_s^{(i)}\right)^2g_+^{(i)}=\left(\phi_s^{(i)}\right)^{\frac{2}{n-s}} \bar{g}^{(i)}, \quad \phi_s^{(i)}=\left(\frac{\rho^{(i)}_s}{\rho}\right)^{n-s}, \quad \phi_s^{(i)}|_{M}=1, 
$$
and $\phi_s^{(i)}>0 $ satisfies the equation (\ref{eq.phi}) with background metric $\bar{g}^{(i)}$. 
By the global estimates given in Lemma \ref{lem.reg1}, \ref{lem.reg2} and Remark \ref{rmk.6}, there is a uniform constant $C>0$ independent of $i$ such that
\begin{equation}\label{eq.6.1}
	\|\phi_s^{(i)}\|_{C^{l,\beta}(\overline{X})} \leq C,  \quad \forall\ i,
\end{equation}
where 
\begin{itemize}
\item $l+\beta=k+\alpha$ if $k+\alpha<2s-n$ or $2s-n=2N-1$ for some positive integer $N$;
\item $l+\beta<2s-n$ if $k+\alpha\geq 2s-n$ and $2s-n$ is not a positive odd integer. 	
\end{itemize}
Therefore, choosing $0<\beta'<\beta$, we have $\{\phi_s^{(i)}\}$ is compact in $C^{l,\beta'}(\overline{X})$. 

Moreover, there exist constants $0<C_1<C_2$ independent of $i$, such that 
$$C_1<\phi_s^{(i)}<C_2,  \quad \forall\ i. $$
The uniform upper bound is obvious from (\ref{eq.6.1}). If there is no uniform lower bound, then by passing to a subsequence, we can assume 
$$
\begin{aligned}
&\bar{g}^{(i)}\longrightarrow\bar{g}^{(\infty)}\quad\textrm{in $C^{k,\alpha}(\overline{X})$}, 
\\
&\phi_{s}^{(i)}\longrightarrow \phi_{s}^{(\infty)} \quad\textrm{in $C^{l,\beta'}(\overline{X})$}, 
\end{aligned}
$$
where $\phi_{s}^{(\infty)}|_{M}=1$, $\phi_{s}^{(\infty)}\geq 0$  and $\phi_{s}^{(\infty)}(p)=0$ for some $p\in X$. However, $\phi_{s}^{(\infty)}$ satisfies the equation  (\ref{eq.phi}) with background metric $\bar{g}^{(\infty)}$. Hence $v_s^{(\infty)}=\phi_{s}^{(\infty)}\rho^{n-s}$ satisfies the equation (\ref{eq.def1}) with background metric $g_+^{(\infty)}=\rho^{-2}\bar{g}^{(\infty)}$. Since  $Y(M,[\hat{g}^{(\infty)}])\geq 0$ and  $\mathrm{Spec(-\Delta_+^{(\infty)})}>\frac{n^2}{4}$, as a solution $v^{(\infty)}_s(p)=0$  can not happen. 

Finally, the compactness of  $\{\phi_s^{(i)}\}$ in $C^{l,\beta'}(\overline{X})$ and uniform boundness $0<C_1<\phi_s^{(i)}<C_2$ imply that $\{\bar{g}_s^{(i)}\}$ is compact in $C^{l,\beta'}(\overline{X})$ topology.

\subsubsection{Case 2: $s=n$}. In this case, 
$$
\bar{g}^{(i)}_n=\left(\rho^{(i)}_n\right)^2g_+=e^{2\varphi^{(i)}}\bar{g}^{(i)}, \quad \varphi^{(i)}=w^{(i)}-\log\rho, \quad \varphi ^{(i)}|_M=0, 
$$
and $w^{(i)}$ satisfies the equation (\ref{eq.def2}), $\varphi ^{(i)}$ satisfies the equation (\ref{eq.vphi}) with background metric $\bar{g}^{(i)}$.
By the global estimates given in  Lemma \ref{lem.reg3}, \ref{lem.reg4} and Remark \ref{rmk.7}, there is a uniform constant $C>0$ independent of $i$ such that
\begin{equation}\label{eq.6.2}
	\|\varphi^{(i)}\|_{C^{l,\beta}(\overline{X})} \leq C, \quad \forall\ i, 
\end{equation}
where 
\begin{itemize}
\item $l+\beta=k+\alpha$ if $k+\alpha<n$ or $n$ is odd;
\item $l+\beta<n$ if $k+\alpha\geq n$ and $n$ is even. 
\end{itemize}
Therefore, choosing $0<\beta'<\beta$, we have $\{\phi_s^{(i)}\}$ is compact in $C^{l,\beta'}(\overline{X})$. And (\ref{eq.6.2}) implies that there is a uniform constant $C_1>0$ such that 
$$
-C_1<\varphi^{(i)}<C_1, \quad \forall\ i. 
$$
These imply that $\{\bar{g}_n^{(i)}\}$ is compact in $C^{l,\beta'}(\overline{X})$ topology.

\subsection{Proof of Theorem \ref{thm.main}-(b)}
Assume   $\{\bar{g}_s^{(i)}\}$ is compact in $C^{k,\alpha}(\overline{X})$ topology  for some $s>\frac{n}{2}+1$ or $s=\frac{n+1}{2}$ and the boundary scalar curvature $\hat{R}^{(i)}>0$ for all $i$.   We first derive some uniform  estimates for $\rho_s$. 

\subsubsection{Case 1:  $s>\frac{n}{2}+1$}
In this case, first notice that
$\rho^{(i)}_s$ satisfies the following equation for each $i$ with background metric $\bar{g}^{(i)}$:
\begin{equation}\label{eq.S1}
\begin{cases}
\displaystyle \bar{\Delta}_s\rho_s + \frac{2s}{2s-n-1}\bar{J}_s\rho_s=0, &\textrm{in $X$}, 
\\
\displaystyle \rho_s=0, &\textrm{on $M$}. 
\end{cases}
\end{equation}
where 
$$
\bar{J}_s =\frac{2s-n-1}{2}\left(\frac{1-|d\rho_s|^2_{\bar{g}_s}}{\rho_s^2}\right)\in C^{k-2,\alpha}(\overline{X}).
$$
Recall that
$$
T_s=\frac{1-|d\rho_s|^2_{\bar{g}_s}}{\rho_s}. 
$$
Notice that (\ref{eq.S1}) is also equivalent to 
\begin{equation}\label{eq.S3}
\begin{cases}
\displaystyle \bar{\Delta}_s\rho_s =-s\bar{T}_s, &\textrm{in $X$}, 
\\
\displaystyle \rho_s=0, &\textrm{on $M$}. 
\end{cases}
\end{equation}
And $\bar{T}_s$ satisfies
\begin{equation}\label{eq.S4}
\begin{cases}
\displaystyle 
 \bar{\Delta}_s \bar{T}_s 
 +\bar{J}_s\bar{T}_s 
 =-2|\bar{A}_s|_{\bar{g}_s}^2\rho_s+2\langle d\rho_s, d\bar{J}_s\rangle_{\bar{g}_s},  &\textrm{in $X$}, 
 \\
\displaystyle  \bar{T}_s=0, &\textrm{on $M$}. 
 \end{cases}
\end{equation}
where $\bar{A}_s$ is the Schouten tensor of $\bar{g}_s$. 

\begin{lemma}\label{lem.S1}
For $s>\frac{n}{2}+1$, if  $\{\bar{g}_s^{(i)}\}$ is compact in $C^{k,\alpha}(\overline{X})$ topology and $\hat{R}^{(i)}>0$ for all $i$, then there exist constants $C>c>0$ such that for all $i$, 
\begin{itemize}
\item[(i)]
$\displaystyle \|\rho_s^{(i)}\|_{L^{\infty}(X)}\leq C$;
\item[(ii)] 
$\displaystyle \|\rho_s^{(i)}\|_{C^{k+1,\alpha}(\overline{X})}\leq C$;
\item[(iii)]
$\displaystyle  c\rho<\rho_s^{(i)}<C\rho$; 
\item[(iv)] 
$\displaystyle  c<|d\rho^{(i)}_s|_{\bar{g}^{(i)}_s}\leq 1$ if $ \rho<c$.
\end{itemize}
\end{lemma}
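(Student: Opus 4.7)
The plan is to extract all four estimates from the equations (\ref{eq.S1})--(\ref{eq.S4}) using the geometric positivity supplied by Lemma \ref{lem.J1}, combined with the hypothesized $C^{k,\alpha}(\overline{X})$-compactness of $\{\bar g_s^{(i)}\}$. The central observation is that $\hat R^{(i)}>0$ together with Lemma \ref{lem.J1} forces $\bar J_s^{(i)}>0$ throughout $\overline X$, which by (\ref{eq.J}) rearranges to the pointwise gradient bound
$$
|d\rho_s^{(i)}|_{\bar g_s^{(i)}}^2 = 1 - \tfrac{2}{2s-n-1}\bigl(\rho_s^{(i)}\bigr)^2 \bar J_s^{(i)} \le 1
$$
on $\overline X$. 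This identity is the workhorse of the argument and immediately yields the upper bound $|d\rho_s^{(i)}|_{\bar g_s^{(i)}}\le 1$ in (iv).

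For (i), integrating the gradient bound along a minimizing $\bar g_s^{(i)}$-geodesic from any $p\in X$ to the boundary gives $\rho_s^{(i)}(p)\le d_{\bar g_s^{(i)}}(p,M)\le\operatorname{diam}(\overline X,\bar g_s^{(i)})$, and the $C^{k,\alpha}$-compactness yields a uniform diameter bound. The remaining lower bound in (iv) follows from the same identity: once (iii) supplies $\rho_s^{(i)}=O(\rho)$ and $\bar J_s^{(i)}$ is uniformly bounded in $C^0$, the squared gradient is $1-O(\rho^2)$ uniformly, hence bounded below by $c^2$ for $\rho<c$ with $c$ small.

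For (iii), Lemma \ref{lem.asym1} gives the refined expansion $\rho_s^{(i)}=x(1+O(x^2))$ near $M$, where $x$ is the geodesic normal defining function associated with the common boundary representative $\hat g^{(i)}$; since $\rho$ is smooth with $\rho^2 g_+^{(i)}|_{TM}=\hat g^{(i)}$ one also has $\rho=x(1+O(x))$, so $\rho_s^{(i)}/\rho\to 1$ uniformly in $i$ as $\rho\to 0$ by the $C^{k,\alpha}$-compactness. This establishes the two-sided bound on a fixed collar $\{\rho<c_1\}$ with constants independent of $i$. To extend the lower bound into the interior I argue by contradiction: if $\rho_s^{(i_j)}(p_{i_j})/\rho(p_{i_j})\to 0$ at points $p_{i_j}$ outside the collar, a subsequential limit $\bar g_s^{(\infty)}\in C^{k,\alpha}(\overline X)$ produces $\rho_s^{(\infty)}(p_\infty)=0$ at an interior point $p_\infty$, contradicting the strong maximum principle for (\ref{eq.S1}) since $\bar J_s^{(\infty)}\ge 0$ there. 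The upper bound in (iii) follows analogously.

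For (ii), I use the equivalent form $\bar\Delta_s\rho_s=-s\bar T_s$ of (\ref{eq.S3}). Classical boundary Schauder estimates applied to this uniformly elliptic equation on $\overline X$ with zero Dirichlet data and $C^{k-1,\alpha}$ principal part upgrade the $L^\infty$ bound from (i) to a uniform $C^{k,\alpha}$ bound on $\rho_s^{(i)}$; bootstrapping once, the improved regularity of $\rho_s^{(i)}$ yields $|d\rho_s^{(i)}|_{\bar g_s^{(i)}}^2\in C^{k,\alpha}$ and hence $\bar T_s^{(i)}\in C^{k-1,\alpha}$ uniformly, so a second application of Schauder promotes the bound to $C^{k+1,\alpha}$. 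The main obstacle I anticipate is the uniformity of the interior positivity needed to drive the contradiction in (iii): although Lemma \ref{lem.J1} gives $\bar J_s^{(i)}>0$ pointwise for each $i$, the passage to a subsequential limit in which both $\bar J_s^{(\infty)}\ge 0$ and the strict positivity of $\rho_s^{(\infty)}$ in the interior survive must be handled carefully, and this is the step that truly uses the coupling of the compactness hypothesis with the strong maximum principle.
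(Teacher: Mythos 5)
Your strategy for (i), (iii) and (iv) is essentially the paper's (positivity of $\bar J_s$ via Lemma \ref{lem.J1}, the identity $|d\rho_s|^2_{\bar g_s}=1-\tfrac{2}{2s-n-1}\rho_s^2\bar J_s$, the diameter bound, and a compactness--contradiction argument closed by the strong maximum principle), but your step (ii) contains a genuine gap: the bootstrap you describe cannot reach $C^{k+1,\alpha}$. First, starting Schauder from $\bar\Delta_s\rho_s=-s\bar T_s$ requires a H\"older bound on the right-hand side $\bar T_s=\rho_s^{-1}\bigl(1-|d\rho_s|^2_{\bar g_s}\bigr)$, which is itself one of the unknowns; the paper avoids this circularity by starting from (\ref{eq.S1}), whose zeroth-order coefficient is $\bar J_s$, a curvature quantity bounded in $C^{k-2,\alpha}$ directly by the $C^{k,\alpha}$-compactness of $\{\bar g_s^{(i)}\}$ (alternatively one can seed the iteration with $\bar T_s=\tfrac{2}{2s-n-1}\rho_s\bar J_s$). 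More seriously, your claim that $\rho_s\in C^{k,\alpha}$ yields $|d\rho_s|^2_{\bar g_s}\in C^{k,\alpha}$ and hence $\bar T_s\in C^{k-1,\alpha}$ is off by one derivative: one only gets $|d\rho_s|^2_{\bar g_s}\in C^{k-1,\alpha}$, so dividing by $\rho_s$ gives $\bar T_s\in C^{k-2,\alpha}$, and Schauder for (\ref{eq.S3}) returns $\rho_s\in C^{k,\alpha}$ --- the iteration stalls. The missing ingredient is the elliptic equation (\ref{eq.S4}) satisfied by $\bar T_s$ itself, with $\bar T_s|_M=0$ and right-hand side $-2|\bar A_s|^2_{\bar g_s}\rho_s+2\langle d\rho_s,d\bar J_s\rangle_{\bar g_s}$ controlled by the metric compactness and the already-obtained $C^{k,\alpha}$ bound on $\rho_s$; Schauder for that equation lifts $\bar T_s$ to $C^{k-1,\alpha}$, and only then does (\ref{eq.S3}) give the uniform $C^{k+1,\alpha}$ bound on $\rho_s$.

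A secondary soft spot is your collar argument in (iii): Lemma \ref{lem.asym1} is phrased in the $i$-dependent geodesic normal defining function $x^{(i)}$, and the uniformity in $i$ of its $O(x^{2+\epsilon})$ error (and of the comparison $\rho=x^{(i)}(1+O(x^{(i)}))$) is not established; asserting it ``by compactness'' needs an argument. The paper sidesteps this entirely: the upper bound in (iii) comes from (ii) together with $\rho_s^{(i)}|_M=0$ (Corollary \ref{cor.ext}), and the lower bound from the same subsequential-limit argument you give, observing that the limit ratio equals $1$ on $M$, so a vanishing infimum must occur at an interior point, where the strong maximum principle for (\ref{eq.S1}) (with $\bar J_s^{(\infty)}\ge 0$, so $\bar\Delta_s^{(\infty)}\rho_s^{(\infty)}\le 0$) yields the contradiction. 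Note also that the subsequential convergence $\rho_s^{(i)}\to\rho_s^{(\infty)}$ used there already presupposes the uniform bound of (ii), so (ii) must be repaired before (iii) and the lower bound in (iv) can be run; once it is, those parts of your argument go through essentially as in the paper.
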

\begin{proof}
For (i), notice that $\hat{R}^{(i)}>0$ implies that $\bar{J}^{(i)}_s>0$  by Lemma \ref{lem.J1}. Therefore,
$$
|d\rho^{(i)}_s|_{\bar{g}^{(i)}_s}\leq 1
\quad\Longrightarrow \quad 
\|\rho_s^{(i)}\|_{L^{\infty}(X)}\leq \mathrm{diam}(\overline{X},\bar{g}_s^{(i)}), 
\quad \forall \ i.
$$

For (ii), since $\{\bar{g}_s^{(i)}\}$ is $C^{k,\alpha}(\overline{X})$ compact, there is a uniform constant $C_1>0$ such that
$$
\|\bar{A}^{(i)}_s\|_{C^{k-2,\alpha}(\overline{X})}\leq C_1, \quad
\|\bar{J}^{(i)}_s\|_{C^{k-2,\alpha}(\overline{X})}\leq C_1, \quad \forall\  i.
$$
By the classical Schauder estimates for equation (\ref{eq.S1}), we first get
$$
\|\rho_s^{(i)}\|_{C^{k,\alpha}(\overline{X})}\leq C_2, \quad \forall\  i.
$$
for some constant $C_2>0$. Then (\ref{eq.S4}) implies that
$$
 \|\bar{T}_s^{(i)}\|_{C^{k-1,\alpha}(\overline{X})}\leq C_3, \quad \forall\  i.
$$
for some constant $C_3>0$. And hence (\ref{eq.S3}) implies that
$$
\|\rho_s^{(i)}\|_{C^{k+1,\alpha}(\overline{X})}\leq C_4, \quad \forall\  i.
$$
for some constant $C_4>0$. 

For (iii), since $\rho$ is a fixed coordinates in a collar neighborhood $X_c$, the uniform upper bound of $\rho_s^{(i)}/\rho$ is directly from (ii). If there is no uniform lower bound, then by passing to a subsequence, we can assume 
$$
\begin{aligned}
&\rho_s^{(i)}\longrightarrow \rho_s^{(\infty)}, \quad\textrm{in $C^{k,\alpha'}(\overline{X})$}, 
\\
&\bar{g}_s^{(i)}\longrightarrow  \bar{g}_s^{(\infty)}, \quad\textrm{in $C^{k,\alpha}(\overline{X})$}.
\end{aligned}
$$
and $(\rho_s^{(\infty)}/\rho)|_{M}=1$, $(\rho_s^{(\infty)}/\rho)|_{p}=0$ for some $p\in X$. However,  $\rho_s^{(\infty)}$ satisfies the equation (\ref{eq.S1}) with background metric $\bar{g}_s^{(\infty)}$ and $\bar{J}_s^{(\infty)}>0$ in $X$. By the strong maximum principle, $\rho_s^{(\infty)}>0$ in $X$, which contradicts with $\rho_s^{(\infty)}(p)=0$. Hence there must be some constants $0<c<C$ such that 
$$
c<\frac{\rho_s^{(i)}}{\rho}<C, \quad \forall \ i. 
$$

For (iv), notice that $|d\rho^{(i)}_s|^2_{\bar{g}^{(i)}_s}=1-\frac{2}{2s-n-1}\bar{J}^{(i)}_s (\rho^{(i)}_s)^2$. So it follows immediately from (iii). 
\end{proof}

\subsubsection{Case 2:  $s=\frac{n+1}{2}$} In this case, notice that $\rho_s^{(i)}$ satisfies the following equation 
\begin{equation}\label{eq.S2}
\begin{cases}
\displaystyle \bar{\Delta}_s\rho_s =-s \bar{T}_s, & \textrm{in $X$},
\\
\displaystyle \rho_s =0, & \textrm{on $M$},
\end{cases}
\end{equation}
where
 $$\bar{T}_s =\frac{1-|d\rho_s|^2_{\bar{g}_s}}{\rho_s}$$ 
satisfies 
\begin{equation}\label{eq.T2}
\begin{cases}
\displaystyle \bar{\Delta}_s\bar{T}_s = -\frac{2}{(n-1)^2} |\bar{E}_s|^2_{\bar{g}_s}\rho_s , & \textrm{in $X$}, 
\\
\displaystyle \bar{T}_s = \frac{2}{n}H_s, & \textrm{on $M$}. 
\end{cases}
\end{equation}
Here $\bar{E}_s$ is the trace free Ricci tensor for $\bar{g}_s$.

\begin{lemma}\label{lem.S2}
For $s=\frac{n+1}{2}$, if  $\{\bar{g}_s^{(i)}\}$ is compact in $C^{k,\alpha}(\overline{X})$ topology and $\hat{R}^{(i)}>0$ for all $i$, then there exist constants $C>c>0$ such that for all $i$, 
\begin{itemize}
\item[(i)]
$\displaystyle \|\rho_s^{(i)}\|_{L^{\infty}(X)}\leq C$ and $\displaystyle \|\bar{T}_s^{(i)}\|_{L^{\infty}(X)}\leq C$;
\item[(ii)] 
$\displaystyle \|\rho_s^{(i)}\|_{C^{k+1,\alpha}(\overline{X})}\leq C$ and $\displaystyle \|\bar{T}_s^{(i)}\|_{C^{k-1,\alpha}(\overline{X})}\leq C$; 
\item[(iii)]
$\displaystyle  c\rho<\rho_s^{(i)}<C\rho$; 
\item[(iv)] 
$\displaystyle  c<|d\rho^{(i)}_s|_{\bar{g}^{(i)}_s}\leq 1$ if $ \rho<c$.
\end{itemize}
\end{lemma}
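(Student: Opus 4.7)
The plan is to adapt the four-step strategy of Lemma \ref{lem.S1} (the $s>\tfrac{n}{2}+1$ case) to the setting $s=\tfrac{n+1}{2}$, where the pair of equations (\ref{eq.S2}) and (\ref{eq.T2}) couple $\rho_s$ with $\bar T_s$ and must be handled jointly. The principal input is that Lemma \ref{lem.T1} gives $\bar T_s^{(i)}>0$ and $\bar J_s^{(i)}\equiv 0$ on $\overline{X}$ for every $i$, from which everything else will be bootstrapped.

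For statement (i), I would first obtain the $L^\infty$ bound on $\rho_s^{(i)}$ by observing that $\bar T_s^{(i)}>0$ forces $|d\rho_s^{(i)}|^2_{\bar g_s^{(i)}}=1-\rho_s^{(i)}\bar T_s^{(i)}\le 1$, so $\rho_s^{(i)}$ is $1$-Lipschitz with respect to $\bar g_s^{(i)}$ and hence bounded by $\mathrm{diam}(\overline{X},\bar g_s^{(i)})$, which is uniformly controlled by the $C^{k,\alpha}$-compactness of $\{\bar g_s^{(i)}\}$. For the $L^\infty$ bound on $\bar T_s^{(i)}$, I would apply the maximum principle to (\ref{eq.T2}): the Dirichlet data $\tfrac{2}{n}H_s^{(i)}$ is uniformly bounded (by $C^{k,\alpha}$-compactness), and the right-hand side $-\tfrac{2}{(n-1)^2}|\bar E_s^{(i)}|^2_{\bar g_s^{(i)}}\rho_s^{(i)}$ is non-positive and uniformly $L^\infty$-bounded, so comparison against a fixed barrier (e.g.\ a uniformly bounded solution of $\bar\Delta_s\psi=-1$, $\psi|_M=0$) delivers $\|\bar T_s^{(i)}\|_\infty\le C$.

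For statement (ii), I would bootstrap classical Schauder estimates alternately between (\ref{eq.S2}) and (\ref{eq.T2}). The Laplacian coefficients are uniformly bounded in $C^{k-2,\alpha}$, $|\bar E_s^{(i)}|^2\in C^{k-2,\alpha}$ uniformly, and $H_s^{(i)}\in C^{k-1,\alpha}$ uniformly. Starting from the $L^\infty$ bounds of (i), $W^{2,p}$ estimates on (\ref{eq.S2}) upgrade $\rho_s^{(i)}$ to $C^{1,\beta}$; then Schauder on (\ref{eq.T2}) upgrades $\bar T_s^{(i)}$ by two derivatives; feeding this back into (\ref{eq.S2}) upgrades $\rho_s^{(i)}$ by two more derivatives; and so on. After finitely many iterations, $\bar T_s^{(i)}\in C^{k-1,\alpha}(\overline{X})$ uniformly, and a final Schauder application to (\ref{eq.S2}) with $C^{k-1,\alpha}$ right-hand side and zero boundary data produces the uniform $C^{k+1,\alpha}$ bound on $\rho_s^{(i)}$.

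For (iii), the upper bound $\rho_s^{(i)}\le C\rho$ is immediate from the uniform $C^{k+1,\alpha}$ bound of (ii) together with $\rho_s^{(i)}/\rho|_M=1$ (which follows from $\bar g^{(i)}|_{TM}=\bar g_s^{(i)}|_{TM}$). The lower bound I would argue by contradiction: if $\rho_s^{(i_j)}/\rho$ attained values tending to $0$ at points $p_j\in X$, pass to a subsequence so that $\bar g_s^{(i_j)}\to\bar g_s^{(\infty)}$ in $C^{k,\alpha}$ and $\rho_s^{(i_j)}\to\rho_s^{(\infty)}$ in $C^{k,\alpha'}(\overline{X})$ for some $0<\alpha'<\alpha$; the limit satisfies $\bar\Delta_s^{(\infty)}\rho_s^{(\infty)}=-s\bar T_s^{(\infty)}\le 0$ with $\bar T_s^{(\infty)}\ge 0$ (limit of positives), so $\rho_s^{(\infty)}\ge 0$ is $\bar g_s^{(\infty)}$-superharmonic in $X$, vanishes exactly to first order on $M$, and cannot attain an interior zero by the strong minimum principle, contradicting $\rho_s^{(\infty)}(p_\infty)=0$. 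Finally, (iv) follows from the identity $|d\rho_s^{(i)}|^2_{\bar g_s^{(i)}}=1-\rho_s^{(i)}\bar T_s^{(i)}$ combined with (i) and (iii): for $\rho<c$ sufficiently small, $\rho_s^{(i)}\bar T_s^{(i)}\le C\rho\le 1/2$ uniformly.

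The main obstacle — the only place where the proof genuinely departs from Lemma \ref{lem.S1} — is that the equation for $\rho_s$ here lacks the zeroth-order absorption term $\tfrac{2s}{2s-n-1}\bar J_s\rho_s$ (since $\bar J_s\equiv 0$ when $s=\tfrac{n+1}{2}$), so the step (iii) strong-maximum-principle argument is run on a purely superharmonic $\rho_s^{(\infty)}\ge 0$. One must therefore secure in advance that $\rho_s^{(\infty)}$ is \emph{not} identically zero, which is guaranteed by the normalization $\rho_s^{(i)}/\rho|_M=1$ passing to the limit in $C^{k,\alpha'}$; this is why the boundary condition $\bar g_s^{(i)}|_{TM}=\hat g^{(i)}$ enters crucially.
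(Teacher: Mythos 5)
Your proposal is correct and follows the paper's overall structure (Lemma \ref{lem.T1} as input, coupled Schauder bootstrap on (\ref{eq.S2})--(\ref{eq.T2}), contradiction argument for the lower bound in (iii), and the identity $|d\rho_s|^2_{\bar g_s}=1-\rho_s\bar T_s$ for (iv)), but it diverges genuinely in one step: the $L^\infty$ bound on $\bar T_s^{(i)}$ in (i). The paper proves this by a blow-up argument: it assumes $\|\bar T_s^{(i)}\|_{L^\infty}\to\infty$, normalizes $(\tilde\rho^{(i)},\tilde T^{(i)})=(\rho_s^{(i)},\bar T_s^{(i)})/\|\bar T_s^{(i)}\|_{L^\infty}$, derives uniform $C^{k-1,\alpha}$ bounds for the normalized coupled system, extracts a limit $\tilde T^{(\infty)}$ that is $\bar\Delta_s^{(\infty)}$-harmonic with zero boundary data, and contradicts $\|\tilde T^{(\infty)}\|_{L^\infty}=1$. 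You instead use the sign of the right-hand side of (\ref{eq.T2}) together with the uniform bounds on $H_s^{(i)}$, $|\bar E_s^{(i)}|^2_{\bar g_s^{(i)}}$ and $\|\rho_s^{(i)}\|_{L^\infty}$ and a fixed comparison function (equivalently the classical a priori estimate $\sup|u|\le\sup_M|u|+C\sup|f|$ for uniformly elliptic operators, with constants uniform by $C^{k,\alpha}$-compactness of the metrics). This is more elementary and quantitative, avoiding the compactness/contradiction step; the paper's route has the advantage of not requiring a uniform barrier but otherwise buys nothing extra here. Your treatment of (iii) is also slightly leaner: you run the strong minimum principle with $\bar T_s^{(\infty)}\ge 0$ and exclude $\rho_s^{(\infty)}\equiv 0$ via the normalization $(\rho_s^{(\infty)}/\rho)|_M=1$, whereas the paper first argues $\bar T_s^{(\infty)}>0$ in $X$ from $H_s^{(\infty)}\ge 0$; both are sound. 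One small correction in (ii): the first-order coefficients of $\bar\Delta_s^{(i)}$ are uniformly bounded in $C^{k-1,\alpha}(\overline X)$, not merely $C^{k-2,\alpha}$, and this better regularity is what you actually need in the final Schauder application to (\ref{eq.S2}) to reach $\|\rho_s^{(i)}\|_{C^{k+1,\alpha}(\overline X)}\le C$; with only $C^{k-2,\alpha}$ coefficients you would stop at $C^{k,\alpha}$. Since the $C^{k-1,\alpha}$ bound is available from the same compactness hypothesis, this is an imprecision rather than a gap.
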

\begin{proof}
For (i), first by Lemma \ref{lem.T1}, $\bar{T}_s>0$ implies that 
$$
|d\rho^{(i)}_s|_{\bar{g}^{(i)}_s}\leq 1
\quad\Longrightarrow \quad
 \|\rho_s^{(i)}\|_{L^{\infty}(X)}\leq \mathrm{diam}(\overline{X}, \bar{g}_s^{(i)}),
\quad \forall \ i.
$$
If there is no uniform constant $C$ such that $\displaystyle \|\bar{T}_s^{(i)}\|_{L^{\infty}(X)}\leq C$, then by passing to a subsequence,  we can assume 
$$
\begin{aligned}
& \|\bar{T}_s^{(i)}\|_{L^{\infty}(X)}\longrightarrow +\infty. 
\end{aligned}
$$
Take $\tilde{\rho}^{(i)}=\rho_s^{(i)}/\|\bar{T}_s^{(i)}\|_{L^{\infty}(X)}$ and $\tilde{T}^{(i)}=\bar{T}_s^{(i)}/\|\bar{T}_s^{(i)}\|_{L^{\infty}(X)}$. Then  $(\tilde{\rho}^{(i)}, \tilde{T}^{(i)})$ satisfies
$$
\begin{cases}
\displaystyle \bar{\Delta}^{(i)}_s \tilde{\rho}^{(i)} +s \tilde{T}^{(i)}=0, & \textrm{in $X$},
\\
\displaystyle  \tilde{\rho}^{(i)}  =0, & \textrm{on $M$},
\end{cases}
\quad\mathrm{and}\quad
\begin{cases}
\displaystyle \bar{\Delta}^{(i)}_s\tilde{T}^{(i)} +\frac{2}{(n-1)^2}\tilde{\rho}^{(i)}   |\bar{E}^{(i)}_s|^2_{\bar{g}_s}=0, & \textrm{in $X$}, 
\\
\displaystyle \tilde{T}^{(i)} = \frac{2}{n} \frac{H_s}{\|\bar{T}_s^{(i)}\|_{L^{\infty}(X)}}, & \textrm{on $M$}. 
\end{cases}
$$
Notice that
$\{\bar{g}_s^{(i)}\}$ is compact in $C^{k,\alpha}(\overline{X})$ topology implies that  
$$
\|H_s^{(i)}\|_{C^{k-1,\alpha}(M)} \leq C_1, \quad
\|\bar{E}^{(i)}_s\|_{C^{k-2,\alpha}(\overline{X})}\leq C_1, 
$$ 
for some constant $C_1>0$ independent of $i$. 
Hence applying the classical Schauder estimate to above coupled system, we have 
$$
 \|(\tilde{\rho}^{(i)}, \tilde{T}^{(i)})\|_{C^{k-1,\alpha}(\overline{X})} \leq C',
$$
for some constant $C'>0$  independent of $i$. 
By passing to a subsequence again, we assume 
$$
\begin{aligned}
& \tilde{T}^{(i)}\longrightarrow \tilde{T}^{(\infty)},  \quad \textrm{in $C^2(\overline{X})$}, 
\\
& \bar{g}_s^{(i)}\longrightarrow  \bar{g}_s^{(\infty)}, \quad\textrm{in $C^{k,\alpha}(\overline{X})$}. 
\end{aligned}
$$
Then $ \tilde{T}^{(\infty)}$ satisfies 
$$
\begin{cases}
\bar{\Delta}_{s}^{(\infty)}\tilde{T}^{(\infty)}=0, & \quad \textrm{in $X$}, 
\\
\tilde{T}^{(\infty)}= 0, & \quad \textrm{on $M$}.
\end{cases}
$$
This implies that $\tilde{T}^{(\infty)}\equiv 0$ which contradicts with $\|\tilde{T}^{(\infty)}\|_{L^{\infty}(X)}=1$.

For (ii),  it follows from (i) together with the classical Schauder estimates for the coupled equations (\ref{eq.S2}) and (\ref{eq.T2}). 

For (iii),  the uniform upper bound of $\rho_s^{(i)}/\rho$ is directly from (ii). If there is no uniform lower bound, then by passing to a subsequence, we can assume 
$$
\begin{aligned}
&\rho_s^{(i)}\longrightarrow \rho_s^{(\infty)}, \quad\textrm{in $C^{k,\alpha'}(\overline{X})$}, 
\\
&\bar{g}_s^{(i)}\longrightarrow  \bar{g}_s^{(\infty)}, \quad\textrm{in $C^{k,\alpha}(\overline{X})$}, 
\end{aligned}
$$
and $(\rho_s^{(\infty)}/\rho)|_{M}=1$, $(\rho_s^{(\infty)}/\rho)|_p=0$ for some $p\in X$. 
Here  $\rho_s^{(\infty)}$ and $\bar{T}_s^{(\infty)}$ satisfies the equation (\ref{eq.S2}) and (\ref{eq.T2}) with background metric $\bar{g}^{(\infty)}_s$.  And $H_s^{(\infty)}\geq  0$ on  $M$ implies that $\bar{T}_s^{\infty}>0$ in $X$. By the strong maximum principle, $\rho_s^{(\infty)}>0$ in $X$, which contradicts with $\rho_s^{(\infty)}(p)=0$. Hence there must be some uniform constants $0<c<C$ such that 
$$
c<\frac{\rho_s^{(i)}}{\rho}<C, \quad \forall \ i. 
$$

For (iv),  notice that $|d\rho^{(i)}_s|^2_{\bar{g}^{(i)}_s} =1-\rho_s^{(i)}\bar{T}_s^{(i)}$. Hence (iv) follows immediately from (iii). 
\end{proof}

Now we continue the proof of Theorem \ref{thm.main}-(b).
Lemma \ref{lem.S1} and  \ref{lem.S2} imply that for $s>\frac{n}{2}+1$ or $s=\frac{n+1}{2}$, 
there is a  constant $C'>0$ such that
$$
\|\rho_s^{(i)}\|_{C^{k+1,\alpha}(\overline{X})}\leq C, \quad \forall\  i.
$$
Since $\rho_s|_M=0$, by Corollary \ref{cor.ext} in the Appendix, there is a constant $C'>0$ such that
$$
\left\| \frac{\rho_s^{(i)}}{\rho}\right\|_{C^{k,\alpha}(\overline{X})} \leq C', \quad \forall\  i.
$$
This together with    Lemma \ref{lem.S1}-(iii) and Lemma  \ref{lem.S2}-(iii)  implies that for some constant $C''>0$, 
$$
\left\| \frac{\rho}{\rho_s^{(i)}}\right\|_{C^{k,\alpha}(\overline{X})} \leq C'', \quad \forall\  i. 
$$
Recall  that
$$
\bar{g}^{(i)}=\left( \frac{\rho}{\rho_s^{(i)}}\right)^2\bar{g}_s^{(i)}. 
$$ 
Therefore $\{\bar{g}^{(i)}\}$ 
is compact in $C^{k,\alpha'}(\overline{X})$ topology for $0<\alpha'<\alpha$. We finish the proof.

\vspace{0.2in}
\section{Appendix}\label{sec.app}

In this section we derive an extension theorem. 

\begin{lemma}\label{lem.ext}
For any $k,l\geq 0$ and $0\leq \alpha<1$, suppose  $f=f(y)$ is a $C^{k,\alpha}$ function which has compact support in $\mathbb{R}^n_y$. 
Then there exists an extension $F=F(x,y)$ satisfying the following properties:
\begin{itemize}
\item[(1)]
$F\in C^{k+l,\alpha}([0,1)_x\times \mathbb{R}^n_y)$ and there exists some  constant $C_1>0$ only depending on $(k,l,\alpha)$ such that
$$
\|F\|_{C^{k+l,\alpha}([0,1)_x\times \mathbb{R}^n_y)} \leq C_1\|f\|_{C^{k,\alpha}(\mathbb{R}^n_y)}; 
$$
\item[(2)]
As $x\rightarrow 0$, $F(x,y)$ has an asymptotical expansion
$$
F(x,y)=x^lf(y)+x^{l+1}f_1(y)+\cdots x^{l+k}f_k(y)+O(x^{l+k+\alpha})
$$
where $f_i(y)\in C^{k-i,\alpha}(\mathbb{R}_y)$ determined  derivatives of $f$ up to $i$-th order and there exists a constant $C_2>0$ only depending on $(k,l,\alpha)$, 
$$
\|f_i\|_{C^{k-i,\alpha}(\mathbb{R}_y)}\leq C_2\|f\|_{C^{k,\alpha}(\mathbb{R}^n_y)}, \quad
\forall\  0\leq i\leq k;
$$

\item[(3)]
For any $j\geq 0$ and multi-index $J\geq 0$, there exists a constant $C_3>0$ only depending on $(k,l,\alpha,j,J)$ such that
$$
\|x^{j+|J|}\partial_x^j\partial_y^{J}F\|_{C^{k+l,\alpha}([0,1)_x\times \mathbb{R}^n_y)}  \leq   C_3 \|f\|_{C^{k,\alpha}(\mathbb{R}^n_y)}.
$$

\end{itemize}
\end{lemma}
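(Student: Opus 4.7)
The plan is to define the extension by mollifying $f$ in the $y$-direction at a scale proportional to $x$. Fix once and for all a cutoff $\chi \in C_c^\infty(\mathbb{R}^n)$ with $\int_{\mathbb{R}^n}\chi=1$ and set
\[
F(x,y) := x^l \phi(x,y), \qquad \phi(x,y) := \int_{\mathbb{R}^n}\chi(w)\,f(y+xw)\,dw.
\]
All three claims will follow from derivative estimates on $\phi$ combined with the prefactor $x^l$.

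Property (2) comes immediately from Taylor expansion. For $w$ in the compact support of $\chi$,
\[
f(y+xw) = \sum_{|K|\leq k}\frac{(xw)^K}{K!}\,\partial^K f(y) + R(x,y,w),\qquad |R|\leq C\,x^{k+\alpha}|w|^{k+\alpha}\,\|f\|_{C^{k,\alpha}}.
\]
Integrating against $\chi(w)$ yields $\phi(x,y) = \sum_{i=0}^{k} x^i f_i(y) + O(x^{k+\alpha})$ with $f_0 = f$ and $f_i(y) := \sum_{|K|=i}\frac{1}{K!}\bigl(\int w^K\chi(w)\,dw\bigr)\partial^K f(y) \in C^{k-i,\alpha}(\mathbb{R}^n)$ of norm at most $C\|f\|_{C^{k,\alpha}}$. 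Multiplying by $x^l$ gives (2).

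For (1) and (3) the workhorse is the following derivative bound on $\phi$. Differentiating under the integral gives
\[
\partial_x^p \partial_y^J \phi(x,y) = \sum_{|\kappa|=p}c_\kappa \int\chi(w)\,w^\kappa\,\partial^{J+\kappa}f(y+xw)\,dw.
\]
Set $q:=p+|J|$. When $q\leq k$ the integrand is a classical derivative of $f$ and the right-hand side is uniformly $C^{k-q,\alpha}$ in $(x,y)$. When $q>k$, say $q=k+m$, one repeatedly converts $y$-derivatives of $f$ into $w$-derivatives via $\partial_{y_i}[f(y+xw)] = x^{-1}\partial_{w_i}[f(y+xw)]$, integrates by parts to move $m$ derivatives onto $w^\kappa\chi(w)$, subtracts the base value $\partial^{J_1}f(y)$ (with $|J_1|=k$) using $\int\partial_w^{J_2}[w^\kappa\chi(w)]\,dw = 0$, and invokes the $C^\alpha$-seminorm of $\partial^{J_1}f$ to deduce $\|\partial_x^p\partial_y^J\phi(x,\cdot)\|_\infty\leq C\,x^{k+\alpha-q}\|f\|_{C^{k,\alpha}}$, with analogous $y$-Hölder bounds at fixed $x$. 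Applying the Leibniz rule to $F=x^l\phi$ then gives
\[
\partial_x^j\partial_y^J F = \sum_{p=0}^{\min(j,l)}\binom{j}{p}\frac{l!}{(l-p)!}\,x^{l-p}\,\partial_x^{j-p}\partial_y^J\phi,
\]
and a direct computation of exponents shows each summand is $O(x^{\sigma}\|f\|_{C^{k,\alpha}})$ with $\sigma\geq 0$ whenever $j+|J|\leq k+l$; at the top order $j+|J|=k+l$ the worst summand is $O(x^\alpha\|f\|_{C^{k,\alpha}})$, yielding (1). For (3), multiply the same Leibniz expansion by $x^{j+|J|}$; the identical exponent count shows the result belongs to $C^{k+l,\alpha}([0,1)\times\mathbb{R}^n)$ with norm $\leq C\|f\|_{C^{k,\alpha}}$.

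The main technical obstacle is promoting the pointwise $L^\infty$ bounds above into the full $C^{k+l,\alpha}$ seminorm up to the boundary $\{x=0\}$. Transverse Hölder continuity in $x$ is handled by the elementary inequality $|x^\alpha-(x')^\alpha|\leq|x-x'|^\alpha$. Tangential Hölder control at fixed $x$ comes from interpolating the $L^\infty$ bound $O(x^{k+\alpha-q})$ on $\partial_x^p\partial_y^J\phi$ against the bound $O(x^{k+\alpha-q-1})$ on its $y$-gradient, splitting into the regimes $|y-y'|\leq x$ and $|y-y'|>x$. Once this interpolation is carried out once and for all at the level of $\phi$, the statements for $F$ and for $x^{j+|J|}\partial_x^j\partial_y^J F$ reduce to routine bookkeeping of powers of $x$ via the Leibniz expansion.
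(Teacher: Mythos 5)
Your proposal is correct and follows essentially the same route as the paper: extend by convolving $f$ with a mollifier at scale $x$ and multiplying by $x^l$, with each derivative beyond order $k$ costing a factor $x^{-1}$ (via moving derivatives onto the kernel), Taylor expansion giving the boundary asymptotics in (2), and exponent counting giving (1) and (3). The only differences are cosmetic: the paper obtains the H\"older bounds by estimating difference quotients directly inside the convolution integral and proves (3) by rewriting $x^{j+|J|}\partial_x^j\partial_y^J$ of the mollification as a convolution with a modified kernel $\tilde{\phi}$, whereas you interpolate $L^\infty$ derivative bounds and do the bookkeeping by hand.
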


\begin{proof}
Let $\phi\in C_c^{\infty}(\mathbb{R}^n_y)$ satisfy
$$
\begin{gathered}
0\leq \phi\leq 1, \quad \phi(0)=1, \quad \phi(y)=0\textrm{ if $|y|\geq 1$}
\quad
\mathrm{and}\quad\int_{\mathbb{R}^n_y}\phi(y)dy=1. 
\end{gathered}
$$
For any $0<x<1$, denote $\displaystyle \phi_{x}(y)=\frac{1}{x^{n}} \phi\left(\frac{y}{x}\right)$ and
$$
f_x(y)=\phi_x * f (y) =\int_{\mathbb{R}^n}\phi_x(y')f(y-y')dy' = \int_{\mathbb{R}^n} \phi(z) f(y-xz)dz. 
$$
Then it is easy to check that $f_{x}\in C^{\infty}_c(\mathbb{R}^n_y)$ and 
$$
\lim_{x\rightarrow 0} \|f_x-f\|_{C^{k,\alpha}(\mathbb{R}^n_y)} =0. 
$$
Hence there is some constant $C_0>0$ such that
$$
\|f_x\|_{C^{k,\alpha}(\mathbb{R}^n_y)}\leq C_0+\|f\|_{C^{k,\alpha}(\mathbb{R}^n_y)}, \quad \forall\  0<x<1.
$$
Denote $h(x,y)=f_{x}(y)$. Then $h$ satisfies the following properties: 
\begin{itemize}
\item[(i)]
If $I$ is a multi index such that $|I|\leq k$, then
$$
\partial_y^{I}h(x,y) =\phi_x  * \left(\partial_y^{I}f\right)=\int_{\mathbb{R}^n} \phi(z) f^{(I)}(y-xz)dz, 
$$
and since $\mathrm{supp}(\phi)\subset \{|z|<1\}$, 
$$
\begin{aligned}
&\ \frac{|\partial_y^{I}h(x,y) -\partial_y^{I}h(x',y')|}{|(x,y)-(x',y')|^{\alpha}}
\\
\leq &\  \int_{\mathbb{R}^n} \phi(z)  \frac{|f^{(I)}(y-xz)-f^{(I)}(y'-x'z)|}{|(x,y)-(x',y')|^{\alpha}} dz
\\
\leq &  \int_{\mathbb{R}^n} \phi(z) \left( \frac{|f^{(I)}(y-xz)-f^{(I)}(y'-xz)|}{|y-y'|^{\alpha}} + \frac{|f^{(I)}(y'-xz)-f^{(I)}(y'-x'z)|}{|x-x'|^{\alpha}} \right)dz.
\end{aligned}
$$
These imply that for some constant $C>0$ only depending on $\phi$,
$$
\| \partial_y^{I}h \|_{C^{0,\alpha}((0,1)\times \mathbb{R}^n_y)} \leq C \|f\|_{C^{k,\alpha}(\mathbb{R}^n_y)}.
$$

\item[(ii)]
If $I$ is a multi index such that $|I|> k$, then choosing a multi index $J$ with $J\leq I$ and $|J|=k$, we have
$$
\partial_y^{I}h =\left(\partial_y^{I-J}\phi_x\right) * \left(\partial_y^{J}f\right)
= \frac{1}{x^{|I|-k}} \int_{\mathbb{R}^n} \phi^{(I-J)}(z) f^{(J)}(y-xz)dz. 
$$
Hence similar as (i), for some constant $C>0$ only depending on $k,|I|$ and $\phi$, we have
$$
\|x^{|I|-k}\partial_y^{I} h\|_{C^{0,\alpha}((0,1)\times \mathbb{R}^n_y)} \leq C \|f\|_{C^{k,\alpha}(\mathbb{R}^n_y)}. 
$$

\item[(iii)]
If $i\leq k$, then 
$$
\partial_x^i h = \sum_{|I|=i}\int_{\mathbb{R}^n} \phi(z) (-z)^{I} \partial_y^{I}f(y-xz)dz
$$
Hence similar as (i),  for some constant  constant $C>0$ only depending on $\phi$ we have
$$
\|\partial_x^i f_x(y) \|_{C^{0,\alpha}((0,1)\times\mathbb{R}^n_y)} \leq C  \|f\|_{C^{k,\alpha}(\mathbb{R}^n_y)}.
$$

\item[(iv)]
If $i>k$, then 
$$
\begin{aligned}
\partial_x^i h
=&\  \partial_x^{i-k} \left(\sum_{|J|=k} \int_{\mathbb{R}^n} \phi(z) (-z)^{J} f^{(J)}(y-xz)dz\right)
\\
=&\ \partial_x^{i-k} \left(\sum_{|J|=k} \int_{\mathbb{R}^n} \frac{1}{x^{n+k}}\phi \left(\frac{y'}{x}\right) (-y')^{J} f^{(J)}(y-y')dy'\right)
\\
=&\ \sum_{|I|\leq i-k} \sum_{|J|=k}  \int_{\mathbb{R}^n}  \frac{c_{I}}{x^{n+i+|J|}} \phi^{(I)}\left(\frac{y'}{x}\right)  (y')^{I+J} f^{(J)}(y-y')dy'
\\
=&\ \sum_{|I|\leq i-k} \sum_{|J|=k}  \frac{c_{I}}{x^{i-k}}   \int_{\mathbb{R}^n} \phi^{(I)}(z)  z^{I+J} f^{(J)}(y-xz)dz. 
\end{aligned}
$$
Here $c_{I}$ are constants. For some constant $C>0$ only depending on $i,k$ and $\phi$, we have 
$$
\|x^{i-k}\partial_x^i f_x(y) \|_{C^{0,\alpha}((0,1)\times \mathbb{R}^n_y)} \leq C \|f\|_{C^{k,\alpha}(\mathbb{R}^n_y)}. 
$$
\end{itemize}

Now we define
$$
F(x,y)=\chi(x)x^l h(x,y),\quad  0<x<1.
$$
where $\chi(x)$ is a smooth cutoff function satisfying 
$$
\textrm{
$\chi\in C_c^{\infty}([0,1))$, $0\leq \chi\leq 1$, 
$ \chi(x)=1$ if $0\leq x<\frac{1}{2}$ and $\chi(x)=0$ if $x>\frac{3}{4}$. }
$$
And $F(0,y)=f(y)$ if $l=0$; $F(0,y)=0$ if $l>0$. 
By (i)-(iv), there exists some constant $C>0$ only depending on $k,l$ and $\phi$ such that
$$
\|F\|_{C^{k+l,\alpha}([0,1)_x\times\mathbb{R}^n_y} \leq C \|f\|_{C^{k,\alpha}(\mathbb{R}^n_y)}. 
$$
Hence  $F$ satisfies (1).  

For the asymptotical expansion of $F$,  it is simply by taking $x\rightarrow 0$ in (iii). 
For $i\leq k$, 
$$
\lim_{x\rightarrow 0} \partial_x^i h = \sum_{|I|=i}\left(\int_{\mathbb{R}^n} \phi(z) (-z)^{I} dz\right)f^{(I)}(y)
$$
In particular, there is some constant $C>0$ depending on $k,\alpha$ such that
$$
\begin{aligned}	
 \left|\frac{\partial_x^k h(x,y) - \partial_x^kh(0,y)}{x^{\alpha}}\right|
 =&\ \left|\sum_{|I|=k}\int_{\mathbb{R}^n} \phi(z) (-z)^{I} 
 \left(\frac{f^{(I)}(y-xz)-f^{(I)}(y)}{x^{\alpha}} \right) dz\right|
 \\
 =&\ \sum_{|I|=k}\int_{\mathbb{R}^n} \phi(z) |z|^{|I|+\alpha}
 \left|\frac{f^{(I)}(y-xz)-f^{(I)}(y)}{|xz|^{\alpha}} \right| dz
 \\
 \leq & \ C\|f\|_{C^{k,\alpha}(\mathbb{R}^n_y)}. 
\end{aligned}
$$
Hence $F$ satisfies (2). 

To prove (3), we just notice that
$$
\begin{aligned}
x^{j+J}\partial_x^j \partial_y^J h
=&\ x^j\partial_x^j \int_{\mathbb{R^n}} \frac{1}{x^n}\phi^{(J)} \left( \frac{y'}{x}\right) f(y-y') dy'
\\
=&\ \sum_{|I|\leq j} \int_{\mathbb{R}^n} \frac{c_I}{x^{n+|I|}} \phi^{(I+J)} \left( \frac{y'}{x}\right) (y')^I f(y-y') dy'
\\
=&\ \sum_{|I|\leq j} \int_{\mathbb{R}^n} c_I \phi^{(I+J)} (z) z^I f(y-xz) dz, 
\end{aligned}
$$
where $c_I$ are some constants. 
Replace $\phi(z)$ by $\tilde{\phi}(z)=\sum_{|I|\leq i}c_I \phi^{(I+J)} (z) z^I$ and $h$ by  $x^{j+J}\partial_x^j \partial_y^J h$, then (i)-(iv) are also valid with an adjustment of the constants. Then (3) follows from a similar proof of (1). 
\end{proof}

\begin{theorem}\label{thm.ext}
Suppose $\overline{X}$ is a smooth compact manifold with boundary $M$. Let $\rho$ be a smooth boundary defining function. For any integers $k,l\geq 0$ and $0\leq \alpha<1$, if $f\in C^{k,\alpha}(M)$, then there exists $F\in C^{k+l,\alpha}(\overline{X})\cap \Lambda_{l}^{\infty}(X)$ satisfies the following properties:
\begin{itemize}
\item[(1)]
There exists some constant $C_1>0$ only depending on $k,l,\alpha$, such that
$$
\|F\|_{C^{k+l,\alpha}(\overline{X})} \leq C_1 \|f\|_{C^{k,\alpha}(M)}; 
$$	
\item[(2)]
As $\rho\rightarrow 0$, $F$ has an asymptotical expansion as follows
$$
F=\rho^lf_0+\rho^{l+1}f_1+\cdots +\rho^{l+k} f_k +O(x^{k+l+\alpha})
$$
where $f_i\in C^{k-i,\alpha}(M)$ with $f_0=f$ and there exists some constant $C_2>0$ only depending on $k,l,\alpha$, 
$$
\|f_i\|_{C^{k-i,\alpha}(M)} \leq C_2\|f\|_{C^{k,\alpha}(M)}, \quad \forall \ i=1,...,k.
$$
\item[(3)]
If $P$ is a differential operators of order $m\leq k+l$ with coefficients in $C^{k,\alpha}(\overline{X})$, then
there exists some constant $C_3>0$ only depending on the  $C^{k,\alpha}(\overline{X})$ norm of the coefficients such that
$$
\|PF\|_{C^{k+l-m,\alpha}(\overline{X})} \leq C_3 \|f\|_{C^{k,\alpha}(M)}, 
\quad 
\|\rho^mPF\|_{C^{k+l,\alpha}(\overline{X})} \leq C_3 \|f\|_{C^{k,\alpha}(M)}. 
$$
\end{itemize}
\end{theorem}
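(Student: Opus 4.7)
The plan is to reduce Theorem \ref{thm.ext} to the local flat-model extension of Lemma \ref{lem.ext} via a finite boundary atlas in which the defining function $\rho$ serves as the normal coordinate. Since $\rho \in C^\infty(\overline X)$ with $d\rho \neq 0$ on $M$, the gradient flow of $\rho$ with respect to an auxiliary smooth Riemannian metric on $\overline X$ gives a smooth collar diffeomorphism $[0,\delta)\times M \to (\text{collar of } M)$ under which $\rho$ coincides with the first coordinate. Cover $M$ by finitely many patches $V_i \subset \mathbb{R}^n_y$ and pull back to charts $\Phi_i : U_i \to [0,\delta) \times V_i$ with $\Phi_i^{\ast} x = \rho|_{U_i}$. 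Pick a smooth partition of unity $\{\chi_i\}$ on $M$ subordinate to $\{V_i\}$ and extend each $\chi_i$ to a compactly supported, $x$-independent function on $U_i$.

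In each chart, view $\chi_i f$ as a compactly supported $C^{k,\alpha}$ function on $\mathbb R^n_y$ and apply Lemma \ref{lem.ext} to obtain a local extension $F_i(x,y) \in C^{k+l,\alpha}([0,1)\times \mathbb R^n_y)$ with the three properties listed there. Pull back by $\Phi_i^{-1}$, multiply by a smooth cut-off $\eta_i$ that is $\equiv 1$ on the support of $\chi_i\circ\Phi_i^{-1}$ and compactly supported in $U_i$, and extend by zero. Set
$$
F := \sum_i \eta_i\,(F_i \circ \Phi_i).
$$
Properties (1) and (2) then follow from the finiteness of the atlas. The global $C^{k+l,\alpha}$ bound is immediate from Lemma \ref{lem.ext}(1), the triangle inequality, and the smoothness of the coordinate changes; and since $\Phi_i^{\ast} x = \rho$, the expansion of $F_i$ pulls back to an expansion in powers of $\rho$. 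Summing over $i$, the coefficient of $\rho^l$ on $M$ is $\sum_i \chi_i f = f$, giving $f_0 = f$, and each higher coefficient $f_j$ is well-defined on $M$ with $\|f_j\|_{C^{k-j,\alpha}(M)} \leq C \|f\|_{C^{k,\alpha}(M)}$.

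The main obstacle is property (3), the estimate for a differential operator $P$ of order $m$ with only $C^{k,\alpha}$ coefficients. In each chart write $P = \sum_{j+|J|\leq m} a_{j,J}(x,y)\,\partial_x^j \partial_y^J$. For the second inequality, which is the more robust, one writes
$$
\rho^m PF = \sum_{j+|J|\leq m} a_{j,J}\; x^{m-(j+|J|)}\,\bigl(x^{j+|J|}\partial_x^j \partial_y^J F\bigr),
$$
in which $x^{m-(j+|J|)}$ is a non-negative integer power of $x$ (hence smooth on $\overline X$) and the balanced derivative $x^{j+|J|}\partial_x^j \partial_y^J F$ lies in $C^{k+l,\alpha}$ uniformly thanks to Lemma \ref{lem.ext}(3); multiplication by the $C^{k,\alpha}$ coefficient $a_{j,J}$ stays in $C^{k+l,\alpha}$ by the Banach-algebra property of H\"older spaces on a compact manifold. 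For the first inequality one differentiates directly: $\partial_x^j \partial_y^J F \in C^{k+l-(j+|J|),\alpha} \hookrightarrow C^{k+l-m,\alpha}$, and the product $a_{j,J}\,\partial_x^j \partial_y^J F$ sits in $C^{k+l-m,\alpha}$ as long as the coefficient regularity $C^{k,\alpha}$ is at least $C^{k+l-m,\alpha}$, i.e.\ $m\geq l$. In the remaining low-order case $m<l$ one extracts additional factors of $\rho$ from the asymptotic expansion in (2), writing $F = \sum_{j=0}^{k}\rho^{l+j}\tilde f_j + R$ with $R = O(\rho^{k+l+\alpha})$, so that every term of $PF$ carries a positive power of $\rho$ sufficient to compensate for the coefficient deficit, while the remainder is handled by Lemma \ref{lem.ext}(3). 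The delicate balance between the finite coefficient regularity and the degenerate boundary behavior of $\partial_x^j F$ is what makes this step the heart of the proof.
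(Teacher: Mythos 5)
Your overall strategy --- a collar chart in which $\rho$ is the normal coordinate, a finite boundary atlas, a partition of unity, and summation of the local extensions furnished by Lemma \ref{lem.ext} --- is exactly the route the paper intends: the paper gives no written proof of Theorem \ref{thm.ext} at all (it is stated right after Lemma \ref{lem.ext}, whose proof is the real content, and the globalization is left implicit), and your verification of properties (1) and (2) is correct, up to the routine care that the cutoffs $\eta_i$ should equal $1$ on a neighborhood in $\overline{X}$ of $\operatorname{supp}\chi_i$ so the expansion coefficients are unaffected.

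The genuine gap is in your treatment of property (3). First, the step ``multiplication by the $C^{k,\alpha}$ coefficient $a_{j,J}$ stays in $C^{k+l,\alpha}$ by the Banach-algebra property'' is false when $l\geq 1$: H\"older spaces are algebras only at a fixed regularity, and the product of $a\in C^{k,\alpha}$ with $u\in C^{k+l,\alpha}$ is in general only $C^{k,\alpha}$, since $\partial^{k+1}(au)$ would require $\partial^{k+1}a$. Second, your repair of the first inequality in the case $m<l$ --- extracting powers of $\rho$ from the expansion so that ``every term of $PF$ carries a positive power of $\rho$ sufficient to compensate for the coefficient deficit'' --- cannot work: multiplying a merely H\"older coefficient by $\rho^j$ does not restore differentiability at interior points (for instance $\rho^{2}a$ with $a\in C^{0,\alpha}\setminus C^{1}$ fails to be $C^{1}$ wherever $a$ is not differentiable), so with $m=0$, $P=a\in C^{0,\alpha}$ and $l\geq 1$ the quantity $\|PF\|_{C^{k+l-m,\alpha}}$ is generically infinite. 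To be fair, this difficulty is partly inherited from the statement itself, which is loosely phrased: in the paper's actual use of part (3) (Lemma \ref{lem.reg0}) the coefficients have regularity $C^{k+l-1,\alpha}$ or $C^{k+l,\alpha}$ when measured against the parameters of the extension being applied ($f=\phi_{s,j}\in C^{k-j,\alpha}(M)$ with $l=j$), i.e.\ at least the regularity of the target space. If you read (3) that way, then your identity $\rho^mPF=\sum_{j+|J|\leq m} a_{j,J}\,x^{m-(j+|J|)}\bigl(x^{j+|J|}\partial_x^j\partial_y^JF\bigr)$ combined with Lemma \ref{lem.ext}(3) does close the argument and the case split $m\geq l$ versus $m<l$ disappears; but as written, both the algebra step and the $m<l$ fix are incorrect and need to be replaced by an explicit regularity hypothesis on the coefficients.
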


\vspace{0.1in}
Theorem \ref{thm.ext} directly implies the following.

\begin{corollary}\label{cor.ext}
Suppose $\overline{X}$ is a smooth compact manifold with boundary $M$. Let $\rho$ be a smooth boundary defining function. For any integer $k\geq 0$ and $0\leq \alpha<1$, if $f\in C^{k+1,\alpha}(\overline{X})$ satisfying $f|_{M}=0$, then $f/\rho\in C^{k,\alpha}(\overline{X})$. Moreover, there exists a constant $C>0$ independent of $f$ such that
$$
\left\|\frac{f}{\rho}\right\|_{C^{k,\alpha}(\overline{X})} \leq C \|f\|_{C^{k+1,\alpha}(\overline{X})}. 
$$
\end{corollary}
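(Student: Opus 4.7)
The plan is to represent $f/\rho$ as an explicit integral of $\partial_\rho f$ in a collar neighborhood of $M$, and then differentiate under the integral sign. The hypothesis $f \in C^{k+1,\alpha}(\overline{X})$ supplies one extra derivative of regularity, and the condition $f|_M=0$ lets us extract a factor of $\rho$ via the fundamental theorem of calculus.

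First I would cover $\overline{X}$ by a finite collection of coordinate charts: interior charts on which $\rho \geq c > 0$ (so $1/\rho$ is smooth and $f/\rho \in C^{k+1,\alpha}$ automatically with norm bounded by $c^{-1}\|f\|_{C^{k+1,\alpha}(\overline{X})}$), together with finitely many boundary collar charts with coordinates $(\rho,\theta)=(\rho,\theta^1,\ldots,\theta^n)$, where $\rho$ is the given smooth defining function and $\theta$ are tangential coordinates. On each such boundary chart, since $f(0,\theta)=0$, the fundamental theorem of calculus combined with the substitution $s = t\rho$ yields
\[
f(\rho,\theta) \;=\; \int_0^\rho (\partial_\rho f)(s,\theta)\,ds \;=\; \rho \int_0^1 (\partial_\rho f)(t\rho,\theta)\,dt,
\]
so that
\[
\frac{f(\rho,\theta)}{\rho} \;=\; \int_0^1 (\partial_\rho f)(t\rho,\theta)\,dt,
\]
which extends continuously to $\rho = 0$ with value $(\partial_\rho f)(0,\theta)$.

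Next I would differentiate under the integral sign. For a multi-index $I=(j,J)$ with $j$ copies of $\partial_\rho$ and tangential multi-index $J$ satisfying $|I|=j+|J|\leq k$, one obtains
\[
\partial_\rho^j\partial_\theta^J\!\left(\tfrac{f}{\rho}\right)(\rho,\theta) \;=\; \int_0^1 t^j\bigl(\partial_\rho^{j+1}\partial_\theta^J f\bigr)(t\rho,\theta)\,dt.
\]
Because $f \in C^{k+1,\alpha}(\overline{X})$, we have $\partial_\rho^{j+1}\partial_\theta^J f \in C^{0,\alpha}(\overline{X})$, so the integrand is Hölder continuous uniformly in $t\in[0,1]$. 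For two points $(\rho_1,\theta_1), (\rho_2,\theta_2)$ in the chart, the difference
\[
\int_0^1 t^j\bigl[(\partial_\rho^{j+1}\partial_\theta^J f)(t\rho_1,\theta_1) - (\partial_\rho^{j+1}\partial_\theta^J f)(t\rho_2,\theta_2)\bigr]\,dt
\]
is bounded in absolute value by $[\partial_\rho^{j+1}\partial_\theta^J f]_\alpha \cdot |(\rho_1,\theta_1)-(\rho_2,\theta_2)|^\alpha$, giving the required $C^{k,\alpha}$ estimate with
\[
\left\|\tfrac{f}{\rho}\right\|_{C^{k,\alpha}(\text{chart})} \;\leq\; C\,\|\partial_\rho f\|_{C^{k,\alpha}(\overline{X})} \;\leq\; C\,\|f\|_{C^{k+1,\alpha}(\overline{X})}.
\]

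Finally, I would patch the interior and boundary estimates together with a smooth partition of unity subordinate to the finite cover, which produces the global $C^{k,\alpha}(\overline{X})$ norm bound with a constant depending only on $\overline{X}$ and $\rho$. The only substantive technical point is the Hölder continuity of the top-order derivatives of $f/\rho$ up to $M$, and this is handled uniformly by the integral representation above; there is no real obstacle beyond keeping track of the change of variables $s=t\rho$ and the factor $t^j$ that appears when the $j$-th $\rho$-derivative is moved inside the integral.
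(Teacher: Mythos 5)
Your argument is correct, but it is genuinely different from the paper's. You work directly in collar coordinates $(\rho,\theta)$ (legitimate, since $d\rho\neq 0$ on $M$ lets you complete $\rho$ to a coordinate system near each boundary point) and write $f/\rho=\int_0^1(\partial_\rho f)(t\rho,\theta)\,dt$, then differentiate under the integral; since $j+1+|J|\leq k+1$, the integrands $t^j(\partial_\rho^{j+1}\partial_\theta^J f)(t\rho,\theta)$ are uniformly H\"older and the seminorm estimate follows because $|(t\rho_1,\theta_1)-(t\rho_2,\theta_2)|\leq|(\rho_1,\theta_1)-(\rho_2,\theta_2)|$ for $t\in[0,1]$. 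This is an elementary, self-contained Hardy-type argument. The paper instead peels off the boundary Taylor expansion term by term: it writes $f=\tilde f_1+\cdots+\tilde f_k+E$ using the extension Theorem \ref{thm.ext} (so each $\tilde f_i\sim\rho^i f_i$ with controlled norms and $\tilde f_i/\rho$ bounded by the explicit construction of Lemma \ref{lem.ext}), and then handles the remainder $E=O(\rho^{k+1+\alpha})$ through the weighted H\"older machinery, $E\in C^{k+1,\alpha}_{(k+1+\alpha)}(\overline{X})\hookrightarrow\Lambda^{k+1,\alpha}_{k+1+\alpha}(X)$, hence $E/\rho\in\Lambda^{k+1,\alpha}_{k+\alpha}(X)\hookrightarrow C^{k,\alpha}(\overline{X})$ by Lemmas \ref{lem.FS3} and \ref{lem.FS2}. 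Your route avoids Theorem \ref{thm.ext} and the $\Lambda$-spaces entirely, which is a real simplification for this statement; the paper's route has the advantage of reusing machinery it needs anyway and of exhibiting the full boundary expansion of $f$. Two small points to tighten: take the boundary charts of product form $[0,c)\times U$ so the segment $\{(t\rho,\theta):t\in[0,1]\}$ stays in the chart, and note that in the interior region the $C^{k,\alpha}$ norm of $f/\rho$ is controlled with a constant depending on derivatives of $1/\rho$ there (not just $c^{-1}$), which is still independent of $f$, as required.
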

\begin{proof}
	First, $f\in C^{k+1,\alpha}(\overline{X})$ and $f|_{M}=0$ implies that $f$ has Taylor expansion as the following: 
$$
f=\rho f_1+O(\rho^{2}), 
$$
where $ f_1\in C^{k,\alpha}(M)$ and for some constant $C_1>0$, 
$$
 \|f_1\|_{C^{k,\alpha}(M)}\leq  C_1\|f\|_{C^{k+1,\alpha}(\overline{X})}. 
$$
Take $\tilde{f}_1$ be the extension of $\rho f_1$ as in Theorem \ref{thm.ext}. Then for some constant $C_1'>0$, 
$$
 \|\tilde{f}_1\|_{C^{k+1,\alpha}(\overline{X})}\leq C_1' \|f_1\|_{C^{k,\alpha}(M)}. 
$$
Moreover, 
$$
f-\tilde{f}_1=\rho^2f_2+O(\rho^{3}) 
$$
where $ f_2\in C^{k-1,\alpha}(M)$ and for some constant $C_2>0$, 
$$
 \|f_2\|_{C^{k-1,\alpha}(M)}\leq  C_2\|f\|_{C^{k+1,\alpha}(\overline{X})}. 
$$
Repeat above progress until we get
$$
f=\tilde{f}_1+\tilde{f}_2+\cdots +\tilde{f}_k + E, 
$$
where $\tilde{f}_i\sim \rho^if_i$  and  for some constants $C_i, C_i'>0$
$$
  \|f_i\|_{C^{k+1-i,\alpha}(M)}\leq  C_i\|f\|_{C^{k+1,\alpha}(\overline{X})}, 
  \quad
   \|\tilde{f}_i\|_{C^{k+1,\alpha}(\overline{X})}\leq C_1' \|f_i\|_{C^{k+1-i,\alpha}(M)}, 
   \quad i=1,...,k. 
$$
Hence for some constant $C>0$, 
$$
 \|E\|_{C^{k+1,\alpha}(\overline{X})}\leq C\|f\|_{C^{k+1,\alpha}(\overline{X})}, 
 \quad E=O(\rho^{k+1+\alpha}).
$$
Therefore , 
$$
E\in C^{k+1,\alpha}_{(k+1+\alpha)}(\overline{X}) \hookrightarrow \Lambda^{k+1,\alpha}_{k+1+\alpha}(X),
\quad\Longrightarrow\quad
\frac{E}{\rho} \in \Lambda^{k+1,\alpha}_{k+\alpha}(X) \hookrightarrow C^{k,\alpha}(\overline{X}). 
$$
By Lemma \ref{lem.FS3}-\ref{lem.FS2},  $C^{k+1,\alpha}_{(k+1+\alpha)}(\overline{X}) $ is closed in $C^{k+1,\alpha}(\overline{X}) $ and above two inclusion maps are continuous.
Hence for some constant $C'>0$, 
$$
\left\|\frac{E}{\rho }\right\|_{C^{k,\alpha}(\overline{X})} \leq C' \|E\|_{C^{k+1,\alpha}(\overline{X})}.  
$$
From the extension method given in Lemma \ref{lem.ext} and Theorem \ref{thm.ext}, it is easy to see
for some constant $C''>0$, 
$$
\left\|\frac{\tilde{f}_i}{\rho}\right\|_{C^{k,\alpha}(\overline{X})} \leq C'' \|\tilde{f}_i\|_{C^{k+1,\alpha}(\overline{X})}, \quad i=1,2,...,k.
$$
We finish the proof. 
\end{proof}

\vspace{0.2in}


\begin{thebibliography}{MM}

\bibitem[An1]{An1}  Anderson, Michael T.:\textit{Boundary regularity, uniqueness, and non-uniqueness for AH Einstein metrics on 4-manifolds}, Adv. Math. 179 (2003), no. 2, 205-249.

\bibitem[An2]{An2} Anderson, Michael T.: \textit{Geometric aspects of the AdS/CFT Correspondence},  AdS/CFT correspondence: Einstein metrics and their conformal boundaries, 1-31,  IRMA Lect. Math. Theor. Phys., 8, Eur. Math. Soc., Zürich, 2005. 


\bibitem[An3]{An3} Anderson, Michael T.: \textit{Einstein metrics with prescribed conformal infinity on 4-manifolds}, Geom. Funct. Anal. 18 (2008), no. 2, 305-366. 



\bibitem[CC]{CC1} 
Case, Jeffrey; Chang, Sun-Yung A.:
\textit{On fractional GJMS operators},
Comm. Pure Appl. Math. 69 (2016), no. 6, 1017-1061. 

\bibitem[CDLS]{CDLS} 
Chru\'{s}ciel, Piotr T.; Delay, Erwann; Lee, John M.; Skinner, Dale N.:
\textit{Boundary regularity of conformally compact Einstein metrics}, 
J. Differential Geom. 69 (2005), no. 1, 111-136. 

\bibitem[CGQ1]{CGQ1}
Chang, Sun-Yung A.; Ge, Yuxin:
\textit{Compactness of conformally compact Einstein manifolds in dimension 4}, 
Adv. Math. 340 (2018), 588–652. 

\bibitem[CGQ2]{CGQ2}
Chang, Sun-Yung A.; Ge, Yuxin; Qing, Jie:
\textit{Compactness of conformally compact Einstein 4-manifolds II},  
Adv. Math. 373 (2020), 107325, 33 pp. 


\bibitem[CLW]{CLW1}
Chen, Xuezhang; Lai, Mijia; Wang, Fang:
\textit{Escobar-Yamabe compactifications for Poincaré-Einstein manifolds and rigidity theorems}, 
Adv. Math. 343 (2019), 16-35. 

\bibitem[CQY]{CQY1}
Chang, Sun-Yung A.; Qing, Jie; Yang, Paul:
\textit{On the topology of conformally compact Einstein 4-manifolds}, 
Noncompact problems at the intersection of geometry, analysis, and topology, 49-61, 
Contemp. Math., 350, Amer. Math. Soc., Providence, RI, 2004. 


\bibitem[FG1]{FG1} 
Fefferman, Charles; Graham, C. Robin: 
\textit{The ambient metric}, 
Annals of Mathematics Studies, 178. Princeton University Press, Princeton, NJ, 2012.

\bibitem[FG2]{FG2}
Fefferman, Charles; Graham, C. Robin:
\textit{Q-curvature and Poincar\'{e} metrics},  
Math. Res. Lett. 9 (2002), no. 2-3, 139-151. 

\bibitem[GH]{GH1}
Graham, C. Robin; Hirachi, Kengo: 
\textit{The ambient obstruction tensor and Q-curvature.} 
AdS/CFT correspondence: Einstein metrics and their conformal boundaries, 59-71, 
IRMA Lect. Math. Theor. Phys., 8, Eur. Math. Soc., Zürich, 2005.

\bibitem[GJMS]{GJMS}
Graham, C. Robin; Jenne, Ralph; Mason, Lionel J.; Sparling, George A. J.:
\textit{Conformally invariant powers of the Laplacian. I. Existence.} 
J. London Math. Soc. (2) 46 (1992), no. 3, 557-565.

\bibitem[GQ]{GQ1} 
Guillarmou, Colin; Qing, Jie: 
\textit{Spectral Characterization of Poincar\'e-Einstein manifolds with infinity of positive Yamabe type}, 
Int. Math. Res. Not. IMRN 2010, no. 9, 1720-1740. 

\bibitem[Gr1]{Gr1} 
Graham, C. Robin:
\textit{Dirichlet-to-Neumann map for Poincar\'{e}-Einstein metrics}, 
Oberwolfach Reports 2 (2005), 2200-2203.

\bibitem[Gr2]{Gr2} 
Graham, C. Robin:
\textit{Volume and area renormalizations for conformally compact Einsteinmetrics}, 
The Proceedings of the 19th Winter School "Geometry and Physics'' (Srn\'{i}, 1999). 
Rend. Circ. Mat. Palermo (2) Suppl. No. 63 (2000), 31-42. 

\bibitem[GL]{GL1} 
Graham, C. Robin; Lee, John M.:
\textit{Einstein metrics with prescribed conformal infinity on the ball},
Adv. Math. 87 (1991), no. 2, 186-225. 

\bibitem[GHL]{GHL1} 
Gursky, Matthew J.; Hang, Fengbo; Lin, Yueh-Ju:
\textit{Riemannian manifolds with positive Yamabe invariant and Paneitz operator},
Int. Math. Res. Not. IMRN 2016, no. 5, 1348-1367. 

\bibitem[Go]{Go1}
Gover, A. R.: 
\textit{Laplacian operators and Q-curvature on conformally Einstein manifolds}, 
Math. Ann. 336 (2006), no. 2, 311-334. 

\bibitem[GT]{GT1}
Gilbarg, David; Trudinger, Neil S.:
\textit{Elliptic Partial Differential Equations of Second Order}, 
Springer-Verlag, Berlin, 1983.

\bibitem[Gu]{Gu1} 
Guillarmou, Colin:
\textit{Meromorphic properties of the resolvent on asymptotically hyperbolic manifolds},
Duke Math. J. 129 (2005), no. 1, 1-37. 

\bibitem[GZ]{GZ1} 
Graham, C. Robin; Zworski, Maciej:
\textit{Scattering matrix in conformal geometry}, 
Invent. Math. 152 (2003), no. 1, 89-118. 


\bibitem[HY]{HY1}
Hang, Fengbo; Yang, Paul C.:
\textit{Sign of Green's function of Paneitz operators and the Q curvature}, 
Int. Math. Res. Not. IMRN 2015, no. 19, 9775-9791.

\bibitem[JS]{JS1} 
Joshi, Mark S.; S\'{a} Barreto, Ant\^{o}nio:
\textit{Inverse scattering on asymptotically hyperbolic manifolds}, 
Acta Math. 184 (2000), no. 1, 41-86. 

\bibitem[Ju]{Ju1} 
Juhl, Andreas:
\textit{On the recursive structure of Branson's Q-curvatures}, 
Math. Res. Lett. 21 (2014), no. 3, 495-507. 

\bibitem[Le1]{Le1} 
Lee, John M.:
\textit{The spectrum of an asymptotically hyperbolic Einstein manifold}, 
Comm. Anal. Geom. 3 (1995), no. 1-2, 253-271. 

\bibitem[Le2]{Le2}
Lee, John M.:
\textit{Fredholm operators and Einstein metrics on conformally compact manifolds},
Mem. Amer. Math. Soc. 183 (2006), no. 864, 83 pp. 

\bibitem[LP]{LP1} 
Lee, John M.; Parker, Thomas H.:
\textit{The Yamabe problem}, 
Bull AMS. 17 (1987), no. 1, 37-91.

\bibitem[LSQ]{LSQ1}
Li, Gang; Qing, Jie; Shi, Yuguang:
\textit{Gap phenomena and curvature estimates for conformally compact Einstein manifolds}. 
Trans. Amer. Math. Soc. 369 (2017), no. 6, 4385-4413.

\bibitem[Ma]{Ma1} 
Mazzeo, Rafe:
\textit{The Hodge cohomology of a conformally compact metric},  
J. Differential Geom. 28 (1988), no. 2, 309-339. 

\bibitem[MM]{MM1} 
Mazzeo, Rafe R.; Melrose, Richard B.:
\textit{Meromorphic extension of the resolvent on complete spaces with asymptotically constant negative curvature},  
J. Funct. Anal. 75 (1987), no. 2, 260-310. 

\bibitem[Qi]{Qi1}
Qing, Jie: 
\textit{On the rigidity for conformally compact Einstein manifolds}, 
Int. Math. Res. Not. 2003, no. 21, 1141-1153. 

\bibitem[ST]{ST1}
Shi, Yuguang; Tian, Gang:
\textit{Rigidity of asymptotically hyperbolic manifolds}. 
Comm. Math. Phys. 259 (2005), no. 3, 545-559. 

\bibitem[WZ]{WZ1}
Wang, Fang; Zhou, Huihuang: 
\textit{Comparison theorem for GJMS operators}, 
Sci. China Math. (2020). https://doi.org/10.1007/s11425-020-1689-1.

\end{thebibliography}
\end{document}